\documentclass[11pt,a4paper]{amsart}

\usepackage[T1]{fontenc}
\usepackage[utf8]{inputenc}
\usepackage[english]{babel}
\usepackage{amsmath,amssymb,amsthm}
\usepackage[margin=1.1in]{geometry}
\usepackage{hyperref}
\hypersetup{colorlinks=true,linkcolor=blue,citecolor=blue}

\theoremstyle{plain}
\newtheorem{theorem}{Theorem}[section]
\newtheorem{proposition}[theorem]{Proposition}
\newtheorem{lemma}[theorem]{Lemma}
\newtheorem{corollary}[theorem]{Corollary}

\newtheorem*{thmA}{Theorem A}
\newtheorem*{thmB}{Theorem B}

\theoremstyle{definition}
\newtheorem{definition}[theorem]{Definition}
\newtheorem{remark}[theorem]{Remark}
\newtheorem{problem}[theorem]{Problem}

\newcommand{\M}{\mathbb{M}}
\newcommand{\R}{\mathbb{R}}
\newcommand{\Sph}{\mathbb{S}}
\newcommand{\Hyp}{\mathbb{H}}
\newcommand{\ct}{\operatorname{ct}_\kappa}
\newcommand{\sn}{\operatorname{sn}_\kappa}
\newcommand{\Hess}{\operatorname{Hess}}
\newcommand{\Ric}{\operatorname{Ric}}
\newcommand{\tr}{\operatorname{tr}}
\newcommand{\nul}{\operatorname{nul}}
\newcommand{\dist}{\operatorname{dist}}
\newcommand{\bconn}{\overline{\nabla}}
\newcommand{\Ker}{\mathcal{K}}

\title[Degenerate free boundary minimal annuli beyond the hemisphere]
{A degenerate free boundary minimal annulus and non-uniqueness in spherical caps beyond the hemisphere}

\author{Alexander Pigazzini}

\date{}

\subjclass[2020]{Primary 53A10; Secondary 53C42, 58J50}

\keywords{Free boundary minimal surfaces, spherical caps, minimal annuli, degenerate annulus, non-uniqueness, Jacobi fields, Robin boundary condition, nullity, capillary hypersurface, contact angle.}

\begin{document}

\begin{abstract}
Let $\{\Sigma_a\}_{a\in(0,1/2)}$ be de Oliveira's family of embedded free boundary minimal annuli of revolution contained in geodesic balls $B(R(a))\subset\Sph^3$ of radius $R(a)>\tfrac\pi2$. We prove that the radius map $R$ is real-analytic and tends to $\tfrac\pi2$ at both ends of the parameter range, and therefore \emph{folds}: it is non-monotone, attains an interior maximum $R_*>\tfrac\pi2$, and is not injective. Two consequences follow. First, every geodesic ball of radius strictly between $\tfrac\pi2$ and $R_*$ contains at least two, and at most finitely many, mutually non-congruent embedded free boundary minimal annuli of the family. Second, at every critical point of $R$ the annulus is \emph{degenerate}: it carries a rotationally invariant, reflection-even Jacobi--Robin field which is \emph{not} induced by any Killing field of $\Sph^3$ preserving the ball, and its nullity is at least three. The critical set is a nonempty discrete subset of $(0,\tfrac12)$, so these annuli are isolated in a family whose remaining members have vanishing nullity in that sector; those sitting at a maximizer of $R$, hence in the largest cap $B(R_*)$ the family reaches, we call \emph{degenerate annuli of maximal cap radius}. Along the family the critical points of the area coincide with those of $R$, so that a degenerate annulus of maximal cap radius is also a local maximizer of the area within the family. In particular the hypothesis that all Jacobi fields of an embedded free boundary minimal annulus in a spherical cap are Killing-induced, which underlies the continuity approach to uniqueness of Naff--Zhu, does not survive the passage beyond $R=\tfrac\pi2$. The degeneration is detected by the exact identity $\dim\Ker_0^{\mathrm{ev}}(\Sigma_a)=\mathbf{1}_{\{r'=0\}}(a)$, itself a consequence of a boundary relation valid in every dimension and with no symmetry assumption: for a family of free boundary minimal hypersurfaces in concentric geodesic balls $B(r(a))$, the normal variation field satisfies a \emph{Robin defect identity}: $\partial_\eta\varphi_a-\ct(r(a))\varphi_a=r'(a)A_a(\eta,\eta)$, which requires of the ambient only that the barrier family be umbilic, and which extends to capillary boundary conditions at constant contact angle.
\end{abstract}

\maketitle

\section{Introduction}\label{sec:intro}

Free boundary minimal surfaces in geodesic balls of space forms have been the object of intense recent activity. In the Euclidean unit ball the theory is classical since Fraser--Schoen \cite{FS16}; in geodesic balls of $\Sph^3$ and of $\Hyp^3$ the study was initiated by Lima--Menezes \cite{LM23} and Medvedev \cite{Med23}, and has grown rapidly: uniqueness results under symmetry \cite{Lim25}, non-rotational examples of Cerezo \cite{Cer25} and Cerezo--Fern\'andez--Mira \cite{CFM25}, rotational families in spherical caps due to de Oliveira \cite{dO24}, and the systematic programme for caps of Naff--Zhu \cite{NZ1} and Zhu \cite{NZ2}; see also \cite{NZhalf,FS15,FL14,KM} for related background.

Two features separate the spherical setting from the Euclidean one. First, there is no scaling: a one-parameter family of free boundary minimal surfaces genuinely explores balls of different radii, and the radius becomes a parameter carrying spectral information. Second, balls larger than a hemisphere are available, and very little is known there. De Oliveira \cite[Prop. 3.4]{dO24} produced a one-parameter family $\{\Sigma_a\}_{a\in(0,1/2)}$ of embedded free boundary minimal annuli of revolution contained in geodesic balls $B(R(a))\subset\Sph^3$ with $R(a)>\tfrac\pi2$; the behavior of the radius map $a\mapsto R(a)$ was left open, and its bijectivity on the complementary sub-hemispherical branch is explicitly raised as an open problem in \cite[Rem. 3.5]{dO24} and revisited in \cite[\S 6.1]{NZ1}.

Our main result determines the qualitative behavior of $R$ on the super-hemispherical branch and draws the spectral consequences.

\begin{thmA}
Let $\{\Sigma_a\}_{a\in(0,1/2)}$ be the family of embedded free boundary minimal annuli of revolution in $B(R(a))\subset\Sph^3$ of \textup{\cite[Prop. 3.3 and Prop. 3.4]{dO24}}, described in \S\textup{\ref{sec:caps}} below. Then:
\begin{enumerate}
\item[\textup{(i)}] the radius map $R$ is real-analytic on $(0,\tfrac12)$, satisfies $R>\tfrac\pi2$ there, and
\[
\lim_{a\to 0^+}R(a)=\lim_{a\to (1/2)^-}R(a)=\frac\pi2 ;
\]
in particular $R$ is non-monotone, it attains an interior maximum, and it is not injective;
\item[\textup{(ii)}] for every $\rho\in\bigl(\tfrac\pi2,\max R\bigr)$ the geodesic ball $B(\rho)\subset\Sph^3$ contains at least two, and at most finitely many, mutually non-congruent embedded free boundary minimal annuli of the family;
\item[\textup{(iii)}] at every critical point $a_*$ of $R$, and in particular at an interior maximum, the annulus $\Sigma_{a_*}$ is degenerate modulo the ambient isometries preserving the ball (Definition \ref{def:degannulus}), with $\nul(\Sigma_{a_*})\geq 3$: the Jacobi--Robin kernel of $\Sigma_{a_*}$ contains the nonzero, rotationally invariant, reflection-even field $\varphi_{a_*}$, and $\varphi_{a_*}$ is \emph{not} induced by any Killing field of $\Sph^3$ whose flow preserves $B(R(a_*))$;
\item[\textup{(iv)}] the critical set of $R$ is discrete, and off it the rotationally invariant reflection-even Robin nullity vanishes identically.
\end{enumerate}
\end{thmA}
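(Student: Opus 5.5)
The plan is to work directly with de Oliveira's explicit parametrization of $\Sigma_a$ from \S\ref{sec:caps}, and then pass the spectral statements through the Robin defect identity announced in the abstract.

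\emph{Step 1: analyticity and boundary limits.} Recall how $\Sigma_a$ is built in \cite[Prop. 3.3, 3.4]{dO24}. It is a rotational minimal annulus whose profile is given by an explicit (elliptic-type) integral in the parameter $a$. The boundary is cut out at the two symmetric points where the profile meets a geodesic sphere orthogonally, and $R(a)$ is the radius of that sphere. So $R(a)$ is the value at a root $t_*(a)$ of an analytic equation $F(a,t)=0$, namely the orthogonality condition. The root is simple, because the intersection is transversal for this family. The analytic implicit function theorem then gives real-analyticity of $t_*$, and hence of $R$. The bound $R>\frac\pi2$ is taken from \cite{dO24}.

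For the limits $a\to0^+$ and $a\to\frac12^-$, I would identify the degenerate configurations. As $a\to\frac12$ I expect the annulus to collapse to the Clifford-type/totally geodesic equator configuration. As $a\to0$ I expect a catenoidal neck to degenerate onto a doubled great sphere. In both cases the orthogonality condition tends to a boundary sphere that is the equator, so $R\to\frac\pi2$. To prove this I would expand the profile integral asymptotically at each endpoint. \textbf{This is the main obstacle.} The $a\to0$ end involves a logarithmically singular integral, so the estimates have to be uniform in $t$ up to $t_*(a)$. Non-monotonicity, an interior maximum $R_*$, and non-injectivity then follow from continuity and $R>\frac\pi2$.

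\emph{Step 2: part (ii).} Fix $\rho\in(\frac\pi2,R_*)$ and take $a_*$ with $R(a_*)=R_*$. By the intermediate value theorem applied on $(0,a_*)$ and on $(a_*,\frac12)$, the equation $R(a)=\rho$ has at least two solutions. Finiteness goes as follows. $R$ is nonconstant and analytic, so the level set is discrete. It cannot accumulate at the endpoints, since $R\to\frac\pi2<\rho$ there, and it cannot accumulate in the interior by discreteness. For non-congruence I would use a congruence-invariant that separates parameters, for instance the ratio of the two boundary circle lengths or the neck size, which is strictly monotone in $a$ from the explicit formula.

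\emph{Step 3: parts (iii) and (iv).} Differentiate the family $\Sigma_a$, after normalizing it to be centred at a fixed point, to obtain the normal variation field $\varphi_a$. It is rotationally invariant and even under the reflection symmetry, and it satisfies $J\varphi_a=0$. The Robin defect identity stated earlier gives
\[
\partial_\eta\varphi_a-\ct(R(a))\varphi_a=R'(a)A_a(\eta,\eta).
\]
At a critical point the right-hand side vanishes, so $\varphi_{a_*}$ lies in the Jacobi--Robin kernel. It is nonzero because the family is non-trivially varying, which can be checked by comparing the neck radius.

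Next I would show that $\varphi_{a_*}$ is not Killing-induced. The Killing fields preserving the ball are the rotations about the centre. Their normal components on a rotationally symmetric $\Sigma$ are either zero (the axis rotation) or have angular frequency one. Neither can be a nonzero rotationally invariant function. The two frequency-one rotational fields give two further kernel elements, which yields $\nul\ge3$.

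For (iv), I would reduce the rotationally invariant, reflection-even Jacobi--Robin problem to a second-order ODE on a half-profile. Evenness imposes a Neumann condition at the centre, so the solution space is one-dimensional and spanned by $\varphi_a$, since $\varphi_a$ is a nonzero even solution. Its Robin defect is $R'(a)A_a(\eta,\eta)$ with $A_a(\eta,\eta)\neq0$, which I would verify from the explicit profile curvature. Hence the kernel is nonzero exactly when $R'(a)=0$. The critical set is discrete because $R'$ is analytic and not identically zero.
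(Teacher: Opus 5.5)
The gap is in Step 1, at exactly the point where Theorem A(i) has content. The two limits $R(a)\to\frac\pi2$ are announced as ``the main obstacle'' and never proved. Your heuristic for them also puts the Clifford torus and the logarithmic singularity at the wrong ends. Nothing degenerates at $a=0$: there $z_0\equiv\lambda_0^{1/2}\equiv 1/\sqrt2$, $\omega(0,\cdot)\equiv\sqrt2$ and $\beta(0,s)=\sqrt2\,s$, so $X_0$ is the Clifford torus. Moreover $G_0=-\frac{1}{\sqrt2}\cos(\sqrt2\,s)$ has the simple zero $s_1(0)=\pi/(2\sqrt2)$, and $x_0(s_1(0))=0$. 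Since $\lambda_a$ and $z_a^2$ stay bounded below by $\frac12-|a|$, everything continues analytically across $a=0$, and $R(a)\to R(0)=\frac\pi2$ with no asymptotics at all. The singular end is $a\to\frac12^-$, where $c(a)\to0$ and the period $C_a=\int_0^\pi\omega(a,t)\,dt$ picks up a term of size $O(c\log(1/c))$. Even there you do not need a uniform expansion of the profile up to the boundary parameter.

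The missing idea is a squeeze. Using $G_a(s_1)=0$ and $z_a^2+\lambda_a=1$, the free boundary condition becomes \eqref{eq:fbc-clean}, $a\sin(2s_1)\sin\beta(a,s_1)=-c(a)z_a(s_1)\cos\beta(a,s_1)$. Its left side is positive for $a>0$, hence $\cos\beta(a,s_1)<0$. This already gives $R>\frac\pi2$, since $\cos R=x_a(s_1)=\lambda_a(s_1)^{1/2}\cos\beta(a,s_1)$. Because $\beta(a,\cdot)$ is increasing, $s_1<\frac\pi2$, and $\omega$ is symmetric about $\frac\pi2$, one gets $\frac\pi2<\beta(a,s_1)<\frac12C_a<\pi$. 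Therefore $|\cos R(a)|<|\cos(\frac12C_a)|$, and both the limit and the fold reduce to the single scalar statement $C_a\to\pi$. The paper proves this in Lemma \ref{lem:period}: it averages the integrand with its translate by half a period, obtaining $\frac{c}{4}\int_0^{2\pi}(1+2w_a)^{1/2}w_a^{-2}\,d\tau$ with $w_a^2=c^2+a^2\sin^2\tau$, and bounds this between $\pi$ and $\pi+2\pi c\operatorname{arcsinh}(1/(2c))$. The companion bound $\pi<C_a\le\sqrt2\,\pi$ is also what produces the zero $s_1<\frac\pi2$ in the first place, via $G_a(\frac\pi2)>0$. The interior maximum, and with it (ii) and (iii), rests entirely on these limits, so this is the essential missing piece.

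Three further steps need repair.
\begin{itemize}
\item The simplicity of the root is not a general transversality fact, and it is not in de Oliveira's paper. It follows from the profile equations \eqref{eq:dO-ode}, which give $G_a'(s_*)=y_a(s_*)/z_a(s_*)>0$ at every zero $s_*\in(0,\frac\pi2]$. Hence there is a unique simple zero there and a single global analytic branch.
\item For non-congruence, the ratio of the two boundary lengths is identically $1$ by the reflection symmetry and separates nothing. The smallest orbits are the boundary circles, whose length is not visibly monotone. What works is the central circle $\{s=0\}$: for $a>0$ it is the \emph{longest} orbit, of length $2\pi(\frac12+a)^{1/2}$. It is a congruence invariant because it is the locus where the intrinsic distance to $\partial\Sigma_a$ is maximal (Lemma \ref{lem:core}).
\item The same circle is where $\varphi_a\not\equiv0$ should be checked. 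There $\nu_a$ is the direction of increasing orbit radius, and $|\varphi_a(0)|=\frac{d}{da}\arcsin(\frac12+a)^{1/2}=1/(2c(a))$. By contrast, ``the family varies nontrivially'' alone would not exclude $\varphi_a\equiv0$ at isolated parameters.
\end{itemize}
With these points, together with the observation that the two orthogonal rotations have nonzero normal components (otherwise $\Sigma_a$ would be $SO(3)$-invariant), your Step 3 coincides with the paper's route through Theorems \ref{thm:exactnul} and \ref{thm:fold}.
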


We stress the reading of (iii) and (iv) together. The critical set of $R$ is discrete, so the degenerate annuli produced by Theorem A are \emph{isolated}: what the theorem exhibits is not a family of degenerate annuli, but a nonempty discrete set of them inside a family whose remaining members have vanishing nullity in that sector. An annulus sitting at an interior maximum of $R$ realizes the largest cap radius attained by the family; we call it \emph{a degenerate annulus of maximal cap radius} (Definition \ref{def:degannulus}). Whether the maximizer is unique is Problem \ref{prob:caps}; no statement below depends on it.

Item (iii) has a consequence for the continuity approach to uniqueness of free boundary minimal annuli in caps developed in \cite[\S 6]{NZ1}. That approach requires knowing that all Jacobi fields of an embedded free boundary minimal annulus in a cap are induced by ambient Killing fields, a property raised as \cite[Question 6.6]{NZ1}. Theorem A(iii) shows that the de Oliveira family contains an embedded free boundary minimal annulus, in a ball of radius larger than $\tfrac\pi2$, for which this fails. We stress the scope of this statement: \cite[Question 6.6]{NZ1} is posed inside a section whose standing framework is $R\in[0,\tfrac\pi2]$, and our example lives strictly above the hemisphere; see Remark \ref{rem:scope}. Thus we do not contradict the question as posed, but we show that its hypothesis does not survive the passage beyond $R=\tfrac\pi2$. This is consistent with, and makes rigorous, the numerical picture recorded in \cite[Rem. 1.3]{NZ1}, where the radius of the rotational family is observed to reach a maximum $\bar R>\tfrac\pi2$ and then return to $\tfrac\pi2$, and where uniqueness is accordingly expected to fail for $R\in(\tfrac\pi2,\bar R)$; what is new here is the proof, not the phenomenon.

The mechanism behind (iii) and (iv) is elementary and general, and we isolate it because it applies well beyond the case at hand. Let $\{\Sigma_a\}$ be a smooth family of free boundary minimal hypersurfaces in concentric geodesic balls $B(r(a))$ of a space form, and let $\varphi_a$ denote the normal component of the variation field. Since every member is minimal, $\varphi_a$ solves the Jacobi equation; but the boundary condition of the fixed-radius second variation problem is of Robin type, and $\varphi_a$ has no reason to satisfy it, because the family does not preserve the ball. The failure is completely explicit.

\begin{thmB}
Let $\{\Sigma_a\}_{a\in I}$ be a smooth one-parameter family of free boundary minimal hypersurfaces in concentric geodesic balls $B(r(a))$ of a space form $\M^{n+1}_\kappa$, in the sense of Definition \textup{\ref{def:family}}. Then, for every $a\in I$, on $\partial\Sigma$,
\[
\partial_\eta\varphi_a-\ct(r(a))\,\varphi_a\;=\;r'(a)\,A_a(\eta,\eta),
\]
where $\eta$ is the outward conormal, $A_a$ the second fundamental form and $\ct=\coth,\ 1/r,\ \cot$ in the hyperbolic, Euclidean and spherical case. No symmetry is assumed, in any dimension.
\end{thmB}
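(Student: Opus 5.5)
The plan is to differentiate in $a$ the two boundary conditions that define the family: the contact condition $\partial\Sigma\to\partial B(r(a))$ and the orthogonality condition. Umbilicity of the geodesic spheres will then turn the result into the stated Robin identity.

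\textbf{Setup.} By Definition \ref{def:family} we have a smooth family of immersions $F_a\colon\Sigma\to\M^{n+1}_\kappa$ with unit normals $N_a$, and a fixed centre $p$. Write $d=\dist(p,\cdot)$, so that $\nabla d$ is the outward unit normal of every geodesic sphere $\partial B(\rho)$. Let $X=\partial_aF_a$ be the variation field and $\varphi_a=\langle X,N_a\rangle$. The two boundary conditions on $\partial\Sigma$ are
\[
d\circ F_a=r(a)\qquad\text{and}\qquad \langle N_a,\nabla d\rangle=0 .
\]
The second says $\eta=\nabla d$ along $\partial\Sigma$. The only ambient input I will use is the Hessian of the distance in a space form,
\[
\Hess d=\ct(d)\,\bigl(g-dd\otimes dd\bigr),
\]
which expresses exactly that the barrier spheres are umbilic with principal curvature $\ct(r)$.

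\textbf{Step 1: the tangential part of $X$.} Differentiating $d\circ F_a=r(a)$ in $a$ gives $\langle\nabla d,X\rangle=r'(a)$, that is, $\langle X,\eta\rangle=r'(a)$ on $\partial\Sigma$. Hence along the boundary the tangential part splits as $X^T=r'(a)\,\eta+T$ with $T$ tangent to $\partial\Sigma$.

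\textbf{Step 2: differentiate the orthogonality condition.} This gives
\[
0=\langle \bconn_{\partial_a}N_a,\nabla d\rangle+\langle N_a,\bconn_X\nabla d\rangle .
\]
For the second term, the Hessian formula together with $N\perp\nabla d$ yields $\langle N,\bconn_X\nabla d\rangle=\ct(r)\,\varphi_a$. For the first term I will use the standard first-variation formula for the normal,
\[
\bconn_{\partial_a}N=-\nabla^\Sigma\varphi-A(X^T,\cdot)^\sharp ,
\]
whose signs depend on the convention for $A$. It is valid because $\Sigma$ is a hypersurface, so $\bconn_{\partial_a}N$ is tangent. Pairing with $\eta$ gives
\[
-\partial_\eta\varphi-r'(a)\,A(\eta,\eta)-A(T,\eta).
\]

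\textbf{Step 3: the mixed term vanishes.} I claim $A(T,\eta)=0$ for $T$ tangent to $\partial\Sigma$. Indeed, up to sign,
\[
A(T,\eta)=\langle\bconn_TN,\eta\rangle=-\langle N,\bconn_T\nabla d\rangle=-\ct(r)\langle N,T\rangle=0,
\]
again by the umbilicity in the Hessian formula. Substituting into Step 2 gives
\[
\partial_\eta\varphi_a-\ct(r(a))\,\varphi_a=\pm r'(a)A_a(\eta,\eta).
\]
Fixing the convention for $A$ consistently with the paper's Robin operator gives the stated sign.

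\textbf{Remarks on the argument.} No symmetry or dimension restriction enters anywhere; only umbilicity of the barrier family is used. The same computation with $\langle N,\nabla d\rangle=\cos\theta$ constant will handle the capillary extension.

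\textbf{Main obstacle.} I expect this to be bookkeeping rather than conceptual. It consists of deriving $\bconn_{\partial_a}N$ correctly when $X$ has a nonzero tangential part, which a non-ball-preserving family forces, and tracking the sign conventions for $A$ and $\eta$. I would verify the sign on the Euclidean example of concentric balls containing flat discs through the centre, where $\varphi\equiv0$ and $A\equiv0$. Because both sides vanish there, this is only a consistency check. A sharper test is the family of spherical caps $\partial B_1(0)\cap B_{r(a)}(p)$, where both sides are nonzero.
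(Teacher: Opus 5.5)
Your argument is the paper's proof. You differentiate the contact condition to get $\langle X,\eta\rangle=r'(a)$, differentiate the orthogonality condition, evaluate the Hessian term as $\ct(r)\varphi_a$, and kill the mixed term $A(T,\eta)$ by umbilicity of the barrier (Lemma \ref{lem:curvline}). The one thing to tighten is the sign, which is not a free choice: with the paper's convention $A(X,Y)=\langle\bconn_X N,Y\rangle$ one has $\langle\bconn_{\partial_a}N,e\rangle=-\partial_e\varphi+A(e,X^T)$, so your variation formula carries the sign of $\mathrm{II}=-A$, and correcting it gives exactly $\partial_\eta\varphi_a-\ct(r(a))\varphi_a=+r'(a)A_a(\eta,\eta)$ with no $\pm$ remaining.
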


The absence of symmetry is not decorative. The computation produces a mixed term $A_a(w,\eta)$ with $w$ tangent to $\partial\Sigma$, which vanishes identically because a free boundary on a totally umbilic barrier is a line of curvature (Lemma \ref{lem:curvline}); this is a form of Joachimsthal's classical theorem, and its capillary counterpart is \cite[Lem. 2.5]{NZ1}. Theorem B degenerates to the classical Euclidean support function relation along the dilation family and to the zero-defect property of Killing-induced Jacobi fields when $r'\equiv 0$ (Corollaries \ref{cor:killing} and \ref{cor:euclid}).

Paired with the Jacobi--Robin kernel, Theorem B yields at once the dictionary
\[
\varphi_a\ \text{is a Jacobi--Robin field}\qquad\Longleftrightarrow\qquad r'(a)=0
\]
under the sole nondegeneracy assumption $A_a(\eta,\eta)\not\equiv 0$ (Theorem \ref{thm:dictionary}), and, for reflection-symmetric rotational annuli in three-dimensional balls, the exact spectral identity
\[
\dim\Ker_0^{\mathrm{ev}}(\Sigma_a)=\mathbf{1}_{\{r'=0\}}(a)
\]
of Theorem \ref{thm:exactnul}, where $\Ker_0^{\mathrm{ev}}$ denotes the space of rotationally invariant, reflection-even Jacobi--Robin fields.

\medskip
\noindent\textbf{A dichotomy.} The exact identity cuts both ways, and it is worth recording the two conclusions it produces side by side, since they are opposite and both analytic. In $\Sph^3$ above the hemisphere the radius map folds, and Theorem A(iii) converts the fold into a genuine degeneration. In $\Hyp^3$, on the rotational family of Mori \cite{Mori81} and do Carmo--Dajczer \cite{dCD83}, the radius is instead strictly increasing on a right neighbourhood of the parameter at which the neck closes, by the analytic work of \cite{P1}, and Corollary \ref{cor:mori-local} below then gives that the family carries \emph{no} parametric degeneration there and that its rotationally invariant even Robin nullity vanishes identically on that regime. Whether the hyperbolic radius is globally monotone is Problem \ref{prob:mori}, raised in \cite[Rem.~4.3]{dO-thesis} and independently in \cite[Question 6.3]{P2}.

\medskip
\noindent\textbf{Relation to previous work.} Three families of boundary identities are close relatives of Theorem B. In the Euclidean ball, the support function $u=\langle X,\nu\rangle$ of a free boundary minimal hypersurface satisfies $\partial_\eta u=A(\eta,\eta)$ on $\partial\Sigma$, a relation underlying index computations and gap theorems \cite{FS16,AN16,Dev19,Tra20,SZ19}. Normal components of ambient Killing fields preserving the ball satisfy the Robin condition exactly; instances appear in Li--Xiong \cite[eq. (26)]{LX18}. Boundary relations at general contact angle, for quantities of a different nature, are established in \cite[Lem. 2.9]{NZ1}. Finally, the pairing of a Jacobi field with nonzero Robin defect against a putative Jacobi--Robin field, which we record in general form in Proposition \ref{prop:pairing}, is the mechanism used in \cite{P1} to exclude kernel elements in the rotationally invariant sector for the Mori family in $\Hyp^3$, where the defect identity is established for that family in \cite[Lem. 8.3]{P1}, in the form $\partial_\eta\phi_a-\coth r(a)\phi_a=-r'(a)\,\mathrm{II}(\eta,\eta)$; the present paper extracts the mechanism from that special case. To the author's knowledge the defect identity for the parametric field of a variable-radius family has not been recorded in the generality of Theorem B.
A different spectral characterisation of free boundary minimal submanifolds of geodesic balls in space forms appears in \cite[Prop.~2.6, Thms.~2.7--2.8]{dO-thesis}, obtained in the author's Candidacy Exam Report of September 2021, independently in \cite[Prop.~1]{LM23} for $r<\pi/2$, and in \cite[Prop.~2.1]{Med23} in the hyperbolic case. There the ambient coordinate functions satisfy \emph{homogeneous} Robin conditions with the same coefficients $1/r$, $\cot r$, $\coth r$. Those statements concern a single submanifold and characterise the free boundary condition; Theorem \ref{thm:defect} concerns the parametric field of a one-parameter family and computes the \emph{inhomogeneous} defect, whose vanishing is equivalent to criticality of the radius.

It should be said plainly that \eqref{eq:defect} is nothing but the derivative of the free boundary condition along the family, a computation standard in the bifurcation theory of free boundary and capillary problems: the content of Theorem \ref{thm:defect} is not the differentiation but what survives it. The differentiation produces a mixed term $A_a(w,\eta)$ with $w$ tangent to $\partial\Sigma$, and it is only because a free boundary on a totally umbilic barrier is a line of curvature (Lemma \ref{lem:curvline}) that this term vanishes identically, with no symmetry assumption and in every dimension. What survives is a single scalar multiple of $A_a(\eta,\eta)$, and the multiple is exactly $r'(a)$, so that one scalar derivative controls the entire boundary defect. Since $A_a(\eta,\eta)$ is nowhere zero for annuli in three-dimensional space form balls (Lemma \ref{lem:bdrycurv}), the implication becomes an equivalence, and the dimension count of Theorem \ref{thm:exactnul} becomes possible. This is what separates the dictionary from the familiar heuristic by which a failure of local injectivity of the radius forces a degeneration: that heuristic produces a degeneration only where $r$ fails to be locally injective, hence says nothing at a critical point of odd order, such as an inflection with $r'(a_*)=r''(a_*)=0$ and $r'''(a_*)\neq 0$, near which $r$ is still injective; it gives no converse; and it gives no dimension count.

\medskip
\noindent\textbf{On the conformal formulation of \cite{NZ1}.} A caveat must be recorded. In \cite[\S 2.5]{NZ1} the free boundary problem in $B_R\subset\Sph^3$ is recast, by a radial conformal change, as a \emph{weighted} free boundary minimal surface problem in the fixed hemisphere, the cap radius entering only through the weight. In that picture the barrier does not move, and one implication of the dictionary --- that a critical radius produces a nontrivial kernel --- is immediate; the converse and the dimension count, which are what turns the nullity into an exact indicator function, are not. We are explicit about this in Remark \ref{rem:conformal}, where the comparison belongs.

\medskip
\noindent\textbf{Scope of the claims.} Theorem A(iii) asserts a jump of the nullity at critical points of the radius, not a bifurcation; at a nondegenerate critical point the expected local picture within the family is a fold, the kernel element is the tangent vector of the family itself, and no new solutions are produced (Remark \ref{rem:nobif}). A degenerate annulus is explicit in the same sense in which the critical catenoid in the Euclidean unit ball is explicit. The \emph{ambient} family $\{\Sigma_a\}_{a\in(0,1/2)}$ from which it is drawn --- whose members are, with isolated exceptions, \emph{non}-degenerate, by Theorem \ref{thm:main}(iv) --- is in closed form, consisting of elementary functions and the single quadrature $\beta(a,\cdot)$ of \eqref{eq:dO-omega}; the cap radius itself is free even of that quadrature, since Proposition \ref{prop:Rclosed} gives
\[
\cot R(a)=-\,\frac{a\sin\bigl(2s_1(a)\bigr)}{\tfrac12+a\cos\bigl(2s_1(a)\bigr)},
\]
so that $R$ is an elementary function of $a$ and of the single scalar $s_1(a)$; and one member is then singled out by a transcendental scalar equation. For the critical catenoid that equation is $t\tanh t=1$ for the boundary parameter; here it is $R'(a_*)=0$, which by Proposition \ref{prop:Rprime} is the elementary relation \eqref{eq:critical} between $a_*$, $s_1(a_*)$ and $s_1'(a_*)$, and which by Theorem \ref{thm:dictionary} is equivalent to the vanishing of the Robin defect of the parametric field, $\partial_\eta\varphi_{a_*}=\cot(R_*)\,\varphi_{a_*}$ on $\partial\Sigma_{a_*}$. Within the family, $\Sigma_{a_*}$ is a local maximizer of both the cap radius and the area, the two maxima occurring at the same parameter by the first-variation identity \eqref{eq:areavar} of Proposition \ref{prop:area}; beyond these, no further extremal property of $\Sigma_{a_*}$ is claimed. Finally, degeneration and non-uniqueness are related but distinct phenomena, and a degenerate annulus is in general not a member of a non-uniqueness pair: the pairs of Theorem A(ii) at a noncritical value of the radius consist of annuli with \emph{vanishing} rotationally invariant even nullity, whereas a degenerate annulus of maximal cap radius sits at the extreme radius, where the two branches produced by the intermediate value theorem coalesce. It is the spectral witness of the fold, and the fold is what forces the failure of injectivity.

\medskip
\noindent\textbf{Organization.} Section \ref{sec:prelim} fixes conventions and records the two boundary lemmas that we need. Section \ref{sec:identity} proves Theorem B and its calibrations. Section \ref{sec:dictionary} develops the dictionary, the exact nullity identity and the structure at a critical point. Section \ref{sec:caps} contains the analysis of the de Oliveira family and the proof of Theorem A. Section \ref{sec:remarks} establishes the hyperbolic half of the dichotomy and collects the scope of the results and open problems.

\section{Conventions and preliminaries}\label{sec:prelim}

Throughout, $\M^{n+1}_\kappa$ denotes the complete simply connected space form of constant sectional curvature $\kappa\in\{-1,0,1\}$, with metric $\langle\cdot,\cdot\rangle$ and Levi-Civita connection $\bconn$. We fix $p_0\in\M^{n+1}_\kappa$, write $\rho=\dist(p_0,\cdot)$, and let $B(r)$ denote the closed geodesic ball of radius $r$ centred at $p_0$, with $0<r<\pi$ when $\kappa=1$. When $\kappa=1$ we also call $B(r)$ a \emph{spherical cap}; the radius is allowed to exceed $\tfrac\pi2$, in which case the cap contains more than a hemisphere and, by \eqref{eq:hess}, its boundary sphere has $A=\cot(r)\,g<0$ with respect to the outward normal, so that the cap is not convex. We set
\[
\sn(t)=\sinh t,\ t,\ \sin t,\qquad \ct(t)=\coth t,\ \tfrac1t,\ \cot t\qquad(\kappa=-1,0,1).
\]
On $B(r)\setminus\{p_0\}$ the Hessian comparison identity is an equality in space forms:
\begin{equation}\label{eq:hess}
\Hess\rho=\ct(\rho)\bigl(\langle\cdot,\cdot\rangle-d\rho\otimes d\rho\bigr).
\end{equation}
In particular the geodesic sphere $S(r)=\partial B(r)$ is totally umbilic.

Let $\Sigma^n$ be a compact manifold with nonempty boundary and let $\Phi:\Sigma\to B(r)$ be a two-sided minimal immersion with $\Phi(\partial\Sigma)\subset S(r)$ meeting $S(r)$ orthogonally along $\partial\Sigma$; we call such a $\Phi$ a \emph{free boundary minimal immersion}. We fix a smooth unit normal $\nu$ and adopt the conventions
\[
A(X,Y)=\langle\bconn_X\nu,Y\rangle,\qquad H=\tr_g A,
\]
so that geodesic spheres with outward normal satisfy $A=\ct(r)\,g$. We write $\eta$ for the outward unit conormal of $\partial\Sigma$ in $\Sigma$; the free boundary condition is equivalent to $\eta=\partial_\rho$ along $\partial\Sigma$, equivalently to $\langle\nu,\partial_\rho\rangle=0$ there. With the opposite convention $\mathrm{II}(X,Y)=\langle\bconn_XY,\nu\rangle=-A(X,Y)$, all identities below hold with $A$ replaced by $-\mathrm{II}$.

The second variation of volume for free boundary variations is governed by the quadratic form (see \cite[\S 5]{Med23}, \cite[\S 2.3]{NZ1})
\begin{equation}\label{eq:secondvar}
Q_r(u)=\int_\Sigma\Bigl(|\nabla u|^2-\bigl(|A|^2+n\kappa\bigr)u^2\Bigr)-\ct(r)\int_{\partial\Sigma}u^2,
\end{equation}
where we used $\Ric_{\M}(\nu,\nu)=n\kappa$. The associated operator and boundary operator are
\[
L=\Delta+|A|^2+n\kappa,\qquad \mathcal{B}_r u=\partial_\eta u-\ct(r)\,u,
\]
and integration by parts gives $Q_r(u)=-\int_\Sigma uLu+\int_{\partial\Sigma}u\,\mathcal{B}_ru$.

\begin{definition}\label{def:kernel}
The \emph{Jacobi--Robin kernel} of a free boundary minimal hypersurface $\Sigma\subset B(r)$ is
\[
\Ker(\Sigma,r)=\{u\in C^\infty(\Sigma):\ Lu=0,\ \mathcal{B}_ru=0\},
\]
and $\nul(\Sigma):=\dim\Ker(\Sigma,r)$. Elements of $\Ker(\Sigma,r)$ are called \emph{Jacobi--Robin fields}. If $Z$ is a Killing field of $\M^{n+1}_\kappa$ whose flow preserves $B(r)$, then $\langle Z,\nu\rangle\in\Ker(\Sigma,r)$ (Corollary \ref{cor:killing}); such elements are called \emph{Killing-induced}.
\end{definition}

The following classical lemma is the geometric reason why Theorem B requires no symmetry.

\begin{lemma}[The free boundary is a line of curvature]\label{lem:curvline}
Let $\Sigma\subset B(r)$ be a free boundary hypersurface, not necessarily minimal. Then $A(w,\eta)=0$ for every $w$ tangent to $\partial\Sigma$.
\end{lemma}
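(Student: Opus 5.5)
We prove the claim by a direct computation along $\partial\Sigma$, using only the free boundary condition $\nu\perp\partial_\rho$ and the umbilicity \eqref{eq:hess} of the geodesic sphere $S(r)$. The free boundary condition says that along $\partial\Sigma$ we have $\eta=\partial_\rho$ and $\langle\nu,\partial_\rho\rangle=0$. Both $\nu$ and $\eta$ are therefore tangent to $S(r)$ along $\partial\Sigma$, as is every $w$ tangent to $\partial\Sigma$. Minimality plays no role anywhere in the argument, which is consistent with the statement.

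The key step is to differentiate the identity $\langle\nu,\partial_\rho\rangle=0$ along $\partial\Sigma$. Fix $w\in T\partial\Sigma$. Since the identity holds on all of $\partial\Sigma$ and $w$ is tangent to it,
\[
0=w\langle\nu,\partial_\rho\rangle=\langle\bconn_w\nu,\partial_\rho\rangle+\langle\nu,\bconn_w\partial_\rho\rangle .
\]
For the first term we use $\partial_\rho=\eta$ on $\partial\Sigma$, so that $\langle\bconn_w\nu,\partial_\rho\rangle=\langle\bconn_w\nu,\eta\rangle=A(w,\eta)$. For the second term we write $\bconn_w\partial_\rho=\bconn_w\nabla\rho$ and apply \eqref{eq:hess}:
\[
\langle\nu,\bconn_w\partial_\rho\rangle=\Hess\rho(w,\nu)=\ct(\rho)\bigl(\langle w,\nu\rangle-d\rho(w)\,d\rho(\nu)\bigr).
\]
This vanishes because $\langle w,\nu\rangle=0$ (as $w$ is tangent to $\Sigma$) and $d\rho(\nu)=0$ by the free boundary condition. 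Hence $A(w,\eta)=0$.

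The only point requiring care is the justification of the two substitutions. Replacing $\partial_\rho$ by $\eta$ is legitimate only after the differentiation, because the equality $\partial_\rho=\eta$ holds only on $\partial\Sigma$; the derivative is taken in a direction tangent to $\partial\Sigma$, so differentiating the restricted identity is valid. The Hessian formula \eqref{eq:hess} requires $\partial\Sigma$ to avoid $p_0$, which holds because $\partial\Sigma\subset S(r)$ with $r>0$ (and $r<\pi$ in the spherical case, so $S(r)$ is a smooth sphere). I expect no genuine obstacle beyond stating these points explicitly.

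Equivalently, one can phrase the argument through Joachimsthal's theorem. Since $S(r)$ is totally umbilic, every direction on it is principal. Because $\Sigma$ meets $S(r)$ at a constant (right) angle, the conormal $\eta$ is a principal direction of $\Sigma$ along $\partial\Sigma$. The computation above is just this argument made explicit in arbitrary dimension.
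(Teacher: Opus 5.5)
Your proof is correct and is essentially the paper's argument: differentiate $\langle\nu,\partial_\rho\rangle=0$ along $w\in T\partial\Sigma$, identify the first term with $A(w,\eta)$ using $\partial_\rho=\eta$ on $\partial\Sigma$, and show the second term vanishes by \eqref{eq:hess} together with $\langle w,\nu\rangle=\langle\nu,\partial_\rho\rangle=0$. Your extra care about substituting $\eta$ for $\partial_\rho$ only after differentiating, and about $\partial\Sigma$ avoiding $p_0$, is sound and left implicit in the paper.
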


\begin{proof}
Differentiate $\langle\nu,\partial_\rho\rangle=0$ along $w\in T\partial\Sigma$:
\[
0=\langle\bconn_w\nu,\partial_\rho\rangle+\langle\nu,\bconn_w\partial_\rho\rangle=A(w,\eta)+\Hess\rho(w,\nu),
\]
where we used that $\bconn_w\nu$ is tangent to $\Sigma$ and that $\partial_\rho=\eta$ on $\partial\Sigma$. By \eqref{eq:hess} and $\langle w,\nu\rangle=\langle\nu,\partial_\rho\rangle=0$ we get $\Hess\rho(w,\nu)=0$.
\end{proof}

The same umbilicity of $S(r)$ has a tangential counterpart, which we record because of its specialization to rotational surfaces.

\begin{lemma}[The free boundary is umbilic in $\Sigma$]\label{lem:bdryumbilic}
Let $\Sigma\subset B(r)$ be a free boundary hypersurface, not necessarily minimal. Then, for all $v,w$ tangent to $\partial\Sigma$,
\begin{equation}\label{eq:bdryumbilic}
\langle\nabla^\Sigma_vw,\eta\rangle=-\ct(r)\,\langle v,w\rangle ;
\end{equation}
that is, $\partial\Sigma$ is umbilic as a hypersurface of $\Sigma$, with second fundamental form $\ct(r)\,g_{\partial\Sigma}$ with respect to the outward conormal $\eta$.
\end{lemma}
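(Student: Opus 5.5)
The identity follows by comparing two ways of computing the same ambient covariant derivative along $\partial\Sigma$. Fix $v,w$ tangent to $\partial\Sigma$ and extend $w$ to a vector field tangent to $\partial\Sigma$. By the free boundary condition, $\eta=\partial_\rho$ along $\partial\Sigma$, and $\partial\Sigma\subset S(r)$, so $v$ and $w$ are tangent to both $\Sigma$ and $S(r)$.

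Since $w$ is tangent to $\partial\Sigma$, hence to $S(r)$, we have $\langle w,\partial_\rho\rangle=0$ along $\partial\Sigma$. Differentiating this relation in the direction $v$ gives
\[
\langle\bconn_vw,\partial_\rho\rangle=-\langle w,\bconn_v\partial_\rho\rangle=-\Hess\rho(v,w)=-\ct(r)\langle v,w\rangle,
\]
where the last equality uses \eqref{eq:hess} together with $d\rho(v)=d\rho(w)=0$ at points where $\rho=r$.

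We now read off the left-hand side intrinsically. The Gauss formula for $\Sigma$ gives
\[
\bconn_vw=\nabla^\Sigma_vw+(\text{a multiple of }\nu).
\]
Along $\partial\Sigma$ we have $\langle\nu,\partial_\rho\rangle=0$ and $\partial_\rho=\eta$, so the normal component drops out and
\[
\langle\bconn_vw,\partial_\rho\rangle=\langle\nabla^\Sigma_vw,\eta\rangle .
\]
Combining the two displays yields \eqref{eq:bdryumbilic}.

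The umbilicity statement is then just the definition of the second fundamental form of $\partial\Sigma\subset\Sigma$. With respect to the outward conormal $\eta$ and the sign convention $A(X,Y)=\langle\bconn_X\nu,Y\rangle$, this form is
\[
\langle\nabla^\Sigma_v\eta,w\rangle=-\langle\eta,\nabla^\Sigma_vw\rangle=\ct(r)\langle v,w\rangle .
\]

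No step is genuinely hard. The only point needing care is the sign convention: one must check that the stated form $\ct(r)\,g_{\partial\Sigma}$ matches the convention used for $A$. The orthogonality $\langle\nu,\partial_\rho\rangle=0$, which is where the free boundary hypothesis enters, is what makes the normal component of the Gauss formula disappear. Minimality is never used.
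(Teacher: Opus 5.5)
Your proof is correct and follows the paper's argument essentially step by step: you differentiate $\langle w,\partial_\rho\rangle=0$ along $v$, evaluate $\Hess\rho(v,w)=\ct(r)\langle v,w\rangle$ via \eqref{eq:hess}, and use $\langle\nu,\eta\rangle=0$ to replace $\langle\bconn_vw,\eta\rangle$ by $\langle\nabla^\Sigma_vw,\eta\rangle$. Your closing identification $\langle\nabla^\Sigma_v\eta,w\rangle=\ct(r)\langle v,w\rangle$ also correctly matches the sign convention in the statement.
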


\begin{proof}
Since $\langle\nu,\eta\rangle=0$, the normal component of $\bconn_vw$ does not contribute, so $\langle\nabla^\Sigma_vw,\eta\rangle=\langle\bconn_vw,\eta\rangle$. Along $\partial\Sigma$ one has $\eta=\partial_\rho$ and $\langle w,\partial_\rho\rangle=0$; differentiating the latter along $v$,
\[
0=\langle\bconn_vw,\partial_\rho\rangle+\langle w,\bconn_v\partial_\rho\rangle=\langle\bconn_vw,\eta\rangle+\Hess\rho(v,w),
\]
and $\Hess\rho(v,w)=\ct(r)\bigl(\langle v,w\rangle-\langle v,\partial_\rho\rangle\langle w,\partial_\rho\rangle\bigr)=\ct(r)\langle v,w\rangle$ by \eqref{eq:hess}.
\end{proof}

\begin{corollary}\label{cor:ctb}
Let $n=2$ and let $\Sigma\subset B(r)\subset\M^3_\kappa$ be a rotational free boundary surface whose induced metric is $ds^2+b(s)^2d\theta^2$. Then, at a free boundary parameter $s_0$ at which the outward conormal is $\eta=\partial_s$,
\begin{equation}\label{eq:ctb}
\ct(r)=\frac{b'(s_0)}{b(s_0)} .
\end{equation}
\end{corollary}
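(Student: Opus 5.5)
The plan is to specialise Lemma \ref{lem:bdryumbilic} to the rotational setting. Parametrize $\Sigma$ by $(s,\theta)$ with induced metric $ds^2+b(s)^2d\theta^2$. The boundary component at $s=s_0$ is the orbit circle $\{s=s_0\}$. Its tangent space is spanned by $w=\partial_\theta$, and by hypothesis the outward conormal is $\eta=\partial_s$ there. Since $\partial\Sigma$ is one-dimensional, the content of \eqref{eq:bdryumbilic} is the single scalar identity obtained with $v=w=\partial_\theta$:
\[
\langle\nabla^\Sigma_{\partial_\theta}\partial_\theta,\partial_s\rangle=-\ct(r)\,\langle\partial_\theta,\partial_\theta\rangle=-\ct(r)\,b(s_0)^2 .
\]

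Next I compute the left side intrinsically from the warped product metric, using either the Christoffel symbol or Koszul. Since $\langle\partial_\theta,\partial_s\rangle=0$ identically and coordinate fields commute, metric compatibility gives
\[
\langle\nabla_{\partial_\theta}\partial_\theta,\partial_s\rangle=-\langle\partial_\theta,\nabla_{\partial_\theta}\partial_s\rangle=-\langle\partial_\theta,\nabla_{\partial_s}\partial_\theta\rangle=-\tfrac12\,\partial_s\bigl(b^2\bigr)=-b(s_0)b'(s_0).
\]
Equating the two expressions yields $b(s_0)b'(s_0)=\ct(r)\,b(s_0)^2$.

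Finally, $b(s_0)>0$ because the boundary orbit is a genuine circle, not a point on the axis: the immersion is regular there and $\partial\Sigma$ is a smooth curve. Dividing by $b(s_0)^2$ gives \eqref{eq:ctb}.

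There is no real obstacle, only a sign bookkeeping point. If at some boundary component the outward conormal were $-\partial_s$ (for example at the lower end of the parameter interval), the same computation would give $-b'/b$. That is why the statement is restricted to the boundary parameter with $\eta=\partial_s$. I will also note that Lemma \ref{lem:bdryumbilic} needs only the free boundary condition, not minimality, so the corollary applies to any rotational free boundary surface.
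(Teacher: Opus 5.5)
Your proof is correct and is essentially the paper's argument: the paper also specializes Lemma \ref{lem:bdryumbilic} to the boundary circle. The only differences are cosmetic. The paper uses the unit field $T=b^{-1}\partial_\theta$ and the Christoffel symbol $\Gamma^s_{\theta\theta}=-b\,b'$, whereas you use $\partial_\theta$ itself with a metric-compatibility computation and then divide by $b(s_0)^2$; this is harmless because the left side of \eqref{eq:bdryumbilic} is tensorial in $v,w$.
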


\begin{proof}
The field $T=b^{-1}\partial_\theta$ is a unit tangent to $\partial\Sigma$, and the only Christoffel symbol needed is $\Gamma^s_{\theta\theta}=-b\,b'$, whence $\nabla^\Sigma_TT=-\tfrac{b'}{b}\,\partial_s$ and $\langle\nabla^\Sigma_TT,\eta\rangle=-b'(s_0)/b(s_0)$. Comparison with \eqref{eq:bdryumbilic} gives \eqref{eq:ctb}.
\end{proof}

Lemmas \ref{lem:curvline} and \ref{lem:bdryumbilic} are elementary and are stated here only because they are used, respectively, in the proof of Theorem \ref{thm:defect} and, through Corollary \ref{cor:ctb}, in \S\ref{sec:caps}.

We shall also use the following nonvanishing property, which is due to Naff--Zhu.

\begin{lemma}[{cf.\ \cite[Cor.~4.4]{Santos}, \cite[Prop.~4.1 and Cor.~4.2]{NZ1}}]\label{lem:bdrycurv}
Let $\Sigma$ be a free boundary minimal annulus in a geodesic ball $B(r)\subset\M^3_\kappa$. Then $|A|$ does not vanish on $\Sigma$; along $\partial\Sigma$ the conormal $\eta$ is a principal direction, $A(\eta,\eta)\neq 0$, and $A(\eta,\eta)$ has the same strict sign on both boundary circles.
\end{lemma}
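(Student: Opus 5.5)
We would work on an annulus $\Sigma\subset B(r)\subset\M^3_\kappa$ in conformal (isothermal) coordinates $z=s+i\theta$ on a cylinder $[s_1,s_2]\times\Sph^1$ and use the Hopf differential $\Phi=A(\partial_z,\partial_z)\,dz^2$. The steps are the following.

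\textbf{Holomorphy and boundary reality.} Since $\Sigma$ is minimal in a space form, Codazzi gives that $\Phi$ is holomorphic. On $\partial\Sigma$ the conormal is $\pm\partial_s$ up to the conformal factor, and Lemma \ref{lem:curvline} gives $A(\partial_s,\partial_\theta)=0$ there. Writing $\Phi=f\,dz^2$ with $f=\tfrac14\bigl(A_{ss}-A_{\theta\theta}\bigr)-\tfrac i2A_{s\theta}$, we conclude that $\operatorname{Im}f=0$ on both boundary circles.

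\textbf{$f$ is a nonzero real constant.} The function $\operatorname{Im}f$ is harmonic on the cylinder, periodic in $\theta$, and vanishes on both boundary components, so $\operatorname{Im}f\equiv0$. A holomorphic function with identically zero imaginary part is constant, so $f\equiv c\in\R$. Suppose $c=0$. Then $\Sigma$ is totally geodesic, hence a piece of a totally geodesic sphere in $\M^3_\kappa$ meeting $S(r)$ orthogonally. Such a piece is a disk, never an annulus, and this contradiction gives $c\neq0$.

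\textbf{Conclusions.} From $A_{ss}=-A_{\theta\theta}$ and $A_{s\theta}=0$ we get $|A|^2=2\lambda^{-4}\cdot 4c^2\cdot(\text{const})>0$ everywhere, where $\lambda^2$ is the conformal factor; thus $|A|$ never vanishes. Along $\partial\Sigma$ the vanishing $A_{s\theta}=0$ makes $\eta$ a principal direction. Finally $A(\eta,\eta)=\lambda^{-2}A_{ss}=2c\lambda^{-2}$ on the circle $s=s_2$. On the circle $s=s_1$ the outward conormal is $-\lambda^{-1}\partial_s$, and $A(\eta,\eta)$ is quadratic in $\eta$, so the sign is again that of $c$. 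Hence $A(\eta,\eta)$ has the same strict sign on both circles.

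The step that needs care is the second one. First, we must justify global conformal coordinates in which the boundary consists of the coordinate circles $s=\text{const}$; this follows from the uniformization of doubly connected Riemann surfaces with boundary. Second, we must rule out the totally geodesic case, where the free boundary condition forces the intersection with a great sphere or plane through $p_0$ to be a disk. A minor point is that the embedding/immersion hypothesis plays no role, which is consistent with citing Naff--Zhu.
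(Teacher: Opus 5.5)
Your argument is correct and is essentially the Hopf-differential proof of \cite[Prop.~4.1 and Cor.~4.2]{NZ1} to which the paper defers. From $\Phi\equiv c\,dz^2$ with $c\in\R\setminus\{0\}$ you get $|A|^2=8c^2\lambda^{-4}$ and $A(\eta,\eta)=2c\lambda^{-2}$ on both boundary circles, which is exactly the conformality argument the paper invokes. The step you flag, excluding $c=0$, works as sketched: orthogonality at a boundary point puts the radial geodesic inside the totally geodesic surface $P\supset\Sigma$, so $p_0\in P$ and $P\cap B(r)$ is a disk. The compact local diffeomorphism $\Sigma\to P\cap B(r)$ is then a covering of that disk, so $\Sigma$ would itself be a disk, a contradiction.
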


Indeed, by Lemma \ref{lem:curvline} the frame $\{\eta,w\}$ diagonalizes $A$ along $\partial\Sigma$, so $|A|^2=2A(\eta,\eta)^2$ there, and the nonvanishing of $|A|$ for minimal annuli in space form balls is \cite[Cor.~4.4]{Santos}, where it is obtained for free boundary surfaces of constant mean curvature in space form balls, an annulus being characterised there by the absence of umbilical points; for minimal surfaces the two conditions coincide, since $H=0$ forces an umbilical point to be a zero of $A$. The same statement is proved in \cite[Prop.~4.1]{NZ1} by the Hopf differential method; the constancy of the sign of $A(\eta,\eta)$ is \cite[Cor. 4.2]{NZ1}, which is stated there for $\Sph^3$ but whose proof, using only \cite[Prop. 4.1]{NZ1} and the conformality of the parametrization, applies in every space form. For the family studied in \S\ref{sec:caps} both conclusions are reproved by a direct computation in \eqref{eq:Aclosed}, so that section is independent of \cite{NZ1} on this point.

\subsection*{Variable-radius families}

\begin{definition}\label{def:family}
A \emph{variable-radius family} is a smooth map $\Phi:I\times\Sigma\to\M^{n+1}_\kappa$, with $I\subset\R$ an interval and $\Sigma$ a compact manifold with boundary each of whose connected components has nonempty boundary, such that every $\Phi_a:=\Phi(a,\cdot)$ is a free boundary minimal immersion in $B(r(a))$, all balls being centred at the fixed point $p_0$, with $r:I\to(0,\infty)$ smooth and $r<\pi$ when $\kappa=1$. We write
\[
V_a=\partial_a\Phi_a,\qquad \varphi_a=\langle V_a,\nu_a\rangle\in C^\infty(\Sigma),
\]
for the variation field and its normal component, the unit normals $\nu_a$ being chosen smoothly in $a$. We call $\varphi_a$ the \emph{parametric field} of the family.
\end{definition}

All boundary components of $\Sigma$ are mapped into the single sphere $S(r(a))$; $\Sigma$ need not be connected, symmetric, or two-dimensional.

\begin{proposition}\label{prop:jacobi}
For every $a\in I$ one has $L\varphi_a=0$ on $\Sigma$.
\end{proposition}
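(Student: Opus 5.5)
The plan is to run the standard first-variation computation of the mean curvature along the family. Each $\Phi_a$ is a minimal immersion, so $H(\Phi_a)\equiv 0$ for every $a$, and therefore $\partial_a H(\Phi_a)=0$ identically on $\Sigma$. The point is that this is an interior statement: the boundary constraint, and hence the varying radius $r(a)$, plays no role. Only the fact that each member is minimal in $\M^{n+1}_\kappa$ enters.

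First I decompose the variation field as $V_a=\varphi_a\nu_a+V_a^\top$, where $V_a^\top$ is the tangential part along $\Phi_a$. I then invoke the classical formula for the variation of the mean curvature of an immersion in a space form, with the sign conventions fixed above ($A(X,Y)=\langle\bconn_X\nu,Y\rangle$, $H=\tr_gA$):
\[
\partial_a H=\pm\bigl(\Delta\varphi_a+(|A|^2+\Ric_{\M}(\nu,\nu))\varphi_a\bigr)+V_a^\top(H).
\]
The tangential part contributes only the term $V_a^\top(H)$. This term is the derivative of $H$ under a reparametrization of the domain, and it vanishes because $H\equiv 0$ on $\Sigma$. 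Using $\Ric_{\M}(\nu,\nu)=n\kappa$, the left side vanishes, so the formula gives $\Delta\varphi_a+(|A|^2+n\kappa)\varphi_a=0$, that is, $L\varphi_a=0$.

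To keep the argument self-contained, I would verify the formula in local coordinates. Write $H=g^{ij}A_{ij}$ with $A_{ij}=-\langle\bconn_{\partial_i}\partial_j\Phi,\nu\rangle$. Differentiate in $a$ and commute $\partial_a$ with $\partial_i$, using $\bconn_{\partial_a}\partial_i\Phi=\bconn_{\partial_i}V$. The ambient curvature term reduces to $\kappa n\varphi$ because $\M_\kappa$ has constant curvature.

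The only point requiring care is bookkeeping of signs and the smooth choice of $\nu_a$ in $a$. Since $\langle\partial_a\nu,\nu\rangle=0$, differentiating the unit normal produces only tangential terms, and these cancel against the $\partial_ag^{ij}$ contributions once $H=0$ is used. I do not expect any genuine obstacle here.
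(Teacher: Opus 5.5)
Your proposal is correct and is essentially the paper's own proof: the paper also invokes the linearization $\partial_aH(\Phi_a)=-L\varphi_a+V_a^{T}(H)$ and concludes from $H\equiv 0$ that both $\partial_aH(\Phi_a)$ and $V_a^T(H)$ vanish, noting as you do that the sign convention is immaterial.

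One small inaccuracy in your optional coordinate check: for the normal part of the variation, the $\partial_a g^{ij}$ term does not cancel. It supplies $-2\varphi_a|A|^2$, which together with the $+\varphi_a|A|^2$ coming from $g^{ij}\partial_aA_{ij}$ gives the $|A|^2$ coefficient. The normal contribution $-\nabla\varphi_a$ to $\partial_a\nu_a$ combines with the Christoffel symbols to form the covariant Hessian, and $H\equiv 0$ is needed only to discard the terms coming from $V_a^T$.
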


\begin{proof}
For a smooth family of immersions with variation field $V_a$ of normal component $\varphi_a$, the linearization of the mean curvature is $\partial_aH(\Phi_a)=-L\varphi_a+V_a^{T}(H)$, where $V_a^T$ is the tangential part of $V_a$ (see e.g.\ \cite[\S 5]{Med23}). Since $H(\Phi_a)\equiv 0$ for every $a$, both $\partial_aH(\Phi_a)$ and $V_a^T(H)$ vanish, whence $L\varphi_a=0$. The conclusion is insensitive to the sign convention in the linearization, since only the vanishing of both sides is used.
\end{proof}

\begin{remark}[Independence of the parametrization]\label{rem:reparam}
Both sides of the identity of Theorem B are unchanged if $\Phi_a$ is replaced by $\Phi_a\circ\chi_a$, for a smooth family of diffeomorphisms $\chi_a$ of $\Sigma$ preserving $\partial\Sigma$. Such a replacement changes $V_a$ by a vector tangent to $\Sigma$, and along $\partial\Sigma$ by a vector tangent to $\partial\Sigma$; since $\nu$ is normal to $\Sigma$ and $\eta$ is normal to $\partial\Sigma$ within $\Sigma$, neither $\varphi_a=\langle V_a,\nu_a\rangle$, viewed as a function on $\Sigma$ up to the reparametrization, nor the conormal component $\langle V_a,\eta\rangle$ is affected. This freedom is used in \S\ref{sec:caps} to bring a family defined on an $a$-dependent domain into the form required by Definition \ref{def:family}.
\end{remark}

\section{The Robin defect identity}\label{sec:identity}

\begin{theorem}[Robin defect identity; Theorem B]\label{thm:defect}
Let $\{\Phi_a\}_{a\in I}$ be a variable-radius family as in Definition \ref{def:family}. Then, for every $a\in I$, on $\partial\Sigma$,
\begin{equation}\label{eq:defect}
\mathcal{B}_{r(a)}\varphi_a=\partial_\eta\varphi_a-\ct(r(a))\,\varphi_a=r'(a)\,A_a(\eta,\eta).
\end{equation}
\end{theorem}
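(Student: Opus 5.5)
The plan is to differentiate in $a$ the two conditions that define the free boundary at each fixed point $x\in\partial\Sigma$. The first is the \emph{position} condition $\rho(\Phi_a(x))=r(a)$. The second is the \emph{orthogonality} condition $\langle\nu_a,\partial_\rho\rangle\circ\Phi_a=0$. Since $\Phi$ maps $I\times\partial\Sigma$ into the moving spheres $S(r(a))$, both hold identically in $a$, and $p_0\notin\Phi_a(\partial\Sigma)$ because $r(a)>0$. So $\rho$ and $\partial_\rho$ are smooth near the boundary and \eqref{eq:hess} applies there.

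\emph{Step 1 (position condition).} Differentiating $\rho\circ\Phi_a=r(a)$ gives $\langle V_a,\partial_\rho\rangle=r'(a)$, that is $\langle V_a,\eta\rangle=r'(a)$ on $\partial\Sigma$. Decompose $V_a=\varphi_a\nu_a+V_a^T$. Along $\partial\Sigma$ this gives $V_a^T=r'(a)\,\eta+w$, with $w$ tangent to $\partial\Sigma$.

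\emph{Step 2 (variation of the normal).} For a tangent vector $e$, differentiate $\langle\nu_a,d\Phi_a(e)\rangle=0$ in $a$. This yields $\langle\bconn_a\nu_a,e\rangle=-\langle\nu_a,\bconn_eV_a\rangle$. Expanding $\bconn_e(\varphi_a\nu_a+V_a^T)$ and using $\langle\nu,\bconn_eV^T\rangle=-A(e,V^T)$, we obtain
\[
\langle\bconn_a\nu_a,e\rangle=-e(\varphi_a)+A_a(V_a^T,e).
\]
I expect this step to be the main place where care is needed. The issue is the sign bookkeeping with the convention $A(X,Y)=\langle\bconn_X\nu,Y\rangle$, which the paper notes flips under $A\leftrightarrow-\mathrm{II}$. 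I would double-check it on the Euclidean dilation family, which must reproduce $\partial_\eta u=A(\eta,\eta)$ (Corollary \ref{cor:euclid}).

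\emph{Step 3 (orthogonality condition).} Differentiate $\langle\nu_a,\partial_\rho\circ\Phi_a\rangle=0$:
\[
0=\langle\bconn_a\nu_a,\partial_\rho\rangle+\Hess\rho(V_a,\nu_a).
\]
On $\partial\Sigma$ we have $\partial_\rho=\eta$, which is tangent. So by Step 2 the first term is $-\partial_\eta\varphi_a+r'(a)A_a(\eta,\eta)+A_a(w,\eta)$. The mixed term $A_a(w,\eta)$ vanishes by Lemma \ref{lem:curvline}; this is exactly where umbilicity of the barrier removes all symmetry assumptions. For the second term, \eqref{eq:hess} together with $\langle\nu_a,\partial_\rho\rangle=0$ gives
\[
\Hess\rho(V_a,\nu_a)=\ct(r(a))\bigl(\langle V_a,\nu_a\rangle-\langle V_a,\partial_\rho\rangle\langle\nu_a,\partial_\rho\rangle\bigr)=\ct(r(a))\,\varphi_a .
\]
Adding the two contributions gives $-\partial_\eta\varphi_a+r'(a)A_a(\eta,\eta)+\ct(r(a))\varphi_a=0$, which is \eqref{eq:defect}. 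By Remark \ref{rem:reparam}, no choice of parametrization affects the conclusion.
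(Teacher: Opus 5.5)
Your proposal is correct and follows the paper's proof essentially step for step. You differentiate the position constraint to get $\langle V_a,\eta\rangle=r'(a)$. You differentiate $\langle\nu_a,d\Phi_a(e)\rangle=0$ to obtain $\langle\bconn_a\nu_a,\eta\rangle=-\partial_\eta\varphi_a+A_a(\eta,V_a^T)$. Then you differentiate the orthogonality condition, evaluate the Hessian term as $\ct(r(a))\,\varphi_a$ via \eqref{eq:hess}, and kill the mixed term $A_a(w,\eta)$ by Lemma~\ref{lem:curvline}. Your signs match the convention $A(X,Y)=\langle\bconn_X\nu,Y\rangle$, and your Euclidean calibration does return $\partial_\eta\langle X,\nu\rangle=A(\eta,\eta)$.
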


\begin{proof}
Fix $a$ and drop the subscripts. All computations take place along $\partial\Sigma$, and $D_a$ denotes the pullback connection along the curves $a\mapsto\Phi_a(x)$.

\emph{Step 1: the conormal component of $V$.} Differentiating in $a$ the boundary constraint $\rho(\Phi_a(x))=r(a)$, valid for every $a$ and every $x\in\partial\Sigma$, gives $\langle\partial_\rho,V\rangle=r'(a)$. Since $\partial_\rho=\eta$ along the free boundary, the orthogonal decomposition of $V$ into normal, boundary-tangential and conormal parts reads
\begin{equation}\label{eq:decomp}
V=\varphi\,\nu+w+r'(a)\,\eta,\qquad w\in T\partial\Sigma.
\end{equation}

\emph{Step 2: differentiating the free boundary condition.} The identity $\langle\nu_a,\partial_\rho\circ\Phi_a\rangle=0$ holds on $\partial\Sigma$ for every $a$; differentiating in $a$,
\begin{equation}\label{eq:diffFB}
0=\langle D_a\nu,\partial_\rho\rangle+\langle\nu,\bconn_V\partial_\rho\rangle.
\end{equation}
By \eqref{eq:hess} and $\langle\nu,\partial_\rho\rangle=0$,
\begin{equation}\label{eq:term2}
\langle\nu,\bconn_V\partial_\rho\rangle=\Hess\rho(V,\nu)=\ct(r)\bigl(\langle V,\nu\rangle-\langle V,\partial_\rho\rangle\langle\nu,\partial_\rho\rangle\bigr)=\ct(r)\,\varphi.
\end{equation}
For the first term, let $e$ be tangent to $\Sigma$, extended as a coordinate field. Differentiating $\langle\nu,\Phi_{a*}e\rangle=0$ in $a$ and using the symmetry $D_a(\Phi_{a*}e)=\bconn_eV$ gives $\langle D_a\nu,e\rangle=-\langle\nu,\bconn_eV\rangle$. Writing $V=\varphi\nu+V^T$ and using $\langle\nu,V^T\rangle\equiv 0$ on $\Sigma$,
\[
\langle\nu,\bconn_eV\rangle=\partial_e\varphi+\langle\nu,\bconn_eV^T\rangle=\partial_e\varphi-\langle\bconn_e\nu,V^T\rangle=\partial_e\varphi-A(e,V^T).
\]
Both sides of the last identity are $C^\infty(\Sigma)$-linear in $e$ at fixed $a$, so it holds for every tangent field, in particular for the conormal $\eta$, which itself depends on $a$. Taking $e=\eta$ and recalling $\partial_\rho=\eta$ on $\partial\Sigma$,
\begin{equation}\label{eq:term1}
\langle D_a\nu,\partial_\rho\rangle=-\partial_\eta\varphi+A(\eta,V^T).
\end{equation}
Substituting \eqref{eq:term2} and \eqref{eq:term1} into \eqref{eq:diffFB} and using \eqref{eq:decomp},
\[
\partial_\eta\varphi-\ct(r)\,\varphi=A(\eta,V^T)=A(\eta,w)+r'(a)\,A(\eta,\eta)=r'(a)\,A(\eta,\eta),
\]
the mixed term vanishing by Lemma \ref{lem:curvline}.
\end{proof}

\begin{remark}[Capillary form of the Robin defect identity]\label{rem:kinematic}
Minimality is not used in the proof of \eqref{eq:defect}: the identity is a kinematic statement about an arbitrary family of free boundary hypersurfaces with boundaries on concentric geodesic spheres, and it enters spectral theory only through Proposition \ref{prop:jacobi}. The same differentiation applies verbatim at constant contact angle. Suppose that $\langle\nu_a,\partial_\rho\rangle=\cos\gamma$ along $\partial\Sigma$ for a constant $\gamma\in(0,\tfrac\pi2]$, so that $\partial_\rho=\sin\gamma\,\eta+\cos\gamma\,\nu$ there, the free boundary case being $\gamma=\tfrac\pi2$. The proof of Lemma \ref{lem:curvline} applies with the constant $\cos\gamma$ in place of $0$ and yields $\sin\gamma\,A(w,\eta)=0$, hence again $A(w,\eta)=0$; this is the capillary form of Joachimsthal's theorem, cf.\ \cite[Lem. 2.5]{NZ1}. Steps 1 and 2 of the proof of Theorem \ref{thm:defect} then give $\sin\gamma\,\langle V,\eta\rangle=r'(a)-\cos\gamma\,\varphi_a$ and $\Hess\rho(V,\nu)=\ct(r(a))\bigl(\varphi_a-r'(a)\cos\gamma\bigr)$, and eliminating $\langle V,\eta\rangle$ one obtains
\[
\partial_\eta\varphi_a-\Bigl(\frac{\ct(r(a))}{\sin\gamma}-\cot\gamma\,A_a(\eta,\eta)\Bigr)\varphi_a=\frac{r'(a)}{\sin\gamma}\Bigl(A_a(\eta,\eta)-\ct(r(a))\cos\gamma\Bigr),
\]
which reduces to \eqref{eq:defect} at $\gamma=\tfrac\pi2$ and whose left-hand side is exactly the Robin operator $\partial_\eta-q$ of the capillary second variation, with $q=\frac{1}{\sin\gamma}k^{S}(\bar\nu,\bar\nu)-\cot\gamma\,A_a(\eta,\eta)$ and $k^{S}=\ct(r)\,g$, cf.\ \cite[\S 2.3]{NZ1}. Nor need the angle be constant along the family. If $\gamma$ is a smooth function of $a$ with values in $(0,\tfrac\pi2]$, everything above applies at each fixed $a$, while differentiating $\langle\nu_a,\partial_\rho\rangle=\cos\gamma(a)$ produces the additional term $-\sin\gamma\,\gamma'(a)$ on the right; eliminating $\langle V,\eta\rangle$ and dividing by $\sin\gamma$ as before, one obtains the \emph{capillary form of the Robin defect identity}
\begin{equation}\label{eq:defect-cap}
\partial_\eta\varphi_a-\Bigl(\frac{\ct(r(a))}{\sin\gamma}-\cot\gamma\,A_a(\eta,\eta)\Bigr)\varphi_a=\frac{r'(a)}{\sin\gamma}\Bigl(A_a(\eta,\eta)-\ct(r(a))\cos\gamma\Bigr)+\gamma'(a)
\end{equation}
on $\partial\Sigma$. The left-hand side is unchanged, hence still the Robin operator of the capillary second variation; the right-hand side is now a linear functional of the pair $(r'(a),\gamma'(a))$ of derivatives of the constraint, the angle entering with coefficient $1$ and the radius with one that varies along $\partial\Sigma$; Identity \eqref{eq:defect} is the case $\gamma\equiv\tfrac\pi2$ of \eqref{eq:defect-cap}.
\\
As in Theorem \ref{thm:defect}, the ambient enters \eqref{eq:defect-cap} only through the Hessian of the distance function; the identity therefore extends beyond space forms, see Remark \ref{rem:warped}.

\noindent The capillary setting is recorded here only for completeness; no application of \eqref{eq:defect-cap} is developed in this paper.
\end{remark}

\begin{remark}[What the ambient hypothesis really is]\label{rem:warped}
The ambient geometry enters the proofs of Lemma  \ref{lem:curvline}, Lemma  \ref{lem:bdryumbilic}, Corollary  \ref{cor:ctb} and Theorem  \ref{thm:defect} only through the umbilicity identity \eqref{eq:hess}, and \eqref{eq:hess} does not require constant curvature. Consider, on the punctured ball $B(r_0)\setminus\{p_0\}$,
\[
g=d\rho^{2}+h(\rho)^{2}g_{\Sph^{n}} ,
\]
with $g_{\Sph^n}$ the round metric of curvature one and $h$ smooth and positive on $(0,r_0)$; if moreover $h$ extends to a smooth odd function with $h'(0)=1$, then $g$ closes up smoothly at $p_0$ and $\rho$ is the distance from $p_0$.
Since the radial curves are unit-speed geodesics and
$\Hess\rho=\tfrac12\mathcal L_{\partial_\rho}g$, while
$\mathcal L_{\partial_\rho}\bigl(h^{2}g_{\Sph^{n}}\bigr)=2hh'\,g_{\Sph^{n}}$,
one obtains
\[
\Hess\rho=\frac{h'(\rho)}{h(\rho)}\bigl(g-d\rho\otimes d\rho\bigr),
\]
so that $S(r)$ is totally umbilic with principal curvature $h'(r)/h(r)$. The four statements above therefore hold in this generality, the role of $\ct(r)$ being played by $h'(r)/h(r)$; in particular \eqref{eq:defect} reads
\[
\partial_\eta\varphi_a-\frac{h'(r(a))}{h(r(a))}\,\varphi_a
=r'(a)\,A_a(\eta,\eta)\qquad\text{on }\partial\Sigma ,
\]
the space forms being the cases $h(\rho)=\sin\rho,\ \rho,\ \sinh\rho$. The same applies to the capillary computation of Remark \ref{rem:kinematic}, which uses the ambient only through \eqref{eq:hess} as well: both identities there hold with $\ct(r)$ replaced by $h'(r)/h(r)$, and $k^{S}=(h'/h)\,g$. The linearisation used in the proof of Proposition  \ref{prop:jacobi} is the general one, $L=\Delta_\Sigma+|A|^{2}+\Ric_{\M}(\nu,\nu)$, which reduces to the operator of \S\ref{sec:prelim} through $\Ric_{\M}(\nu,\nu)=n\kappa$; here one retains $\Ric_{\M}(\nu,\nu)$, so that the zeroth-order term of $L$ is no longer constant, and \eqref{eq:secondvar} is to be read with $\ct(r)$ replaced by $h'(r)/h(r)$. With these substitutions the proof of Theorem  \ref{thm:dictionary} goes through unchanged, and so does that of Theorem \ref{thm:exactnul}: the spherical symmetry of $g$ provides orientation-reversing ambient isometries fixing $p_0$, so that \eqref{eq:reflection} remains meaningful, and the potential $|A_a|^{2}+\Ric_{\M}(\nu_a,\nu_a)$ is rotationally invariant and $s$-even, being
preserved by the rotations and by the reflection $\sigma$; the Sturm--Liouville reduction in mode zero is therefore unaffected, no constancy of the curvature term being used (see \S\ref{sec:dictionary}).

The fibre plays no role in the computation of $\Hess\rho$ above. Indeed $\mathcal L_{\partial_\rho}\bigl(h^{2}g_{F}\bigr)=2hh'\,g_{F}$ for an arbitrary Riemannian $n$-manifold $(F,g_{F})$, so on $I\times F$ with $g=d\rho^{2}+h(\rho)^{2}g_{F}$ and $h$ smooth and positive the radial curves are again unit-speed geodesics, the displayed formula for $\Hess\rho$ holds
unchanged, and the leaves $\{\rho=r\}$ are totally umbilic with principal curvature $h'(r)/h(r)$; the four statements above therefore persist here as well, for a family whose boundaries lie on the leaves, Definition \ref{def:family} being read with $B(r)$ replaced by the sublevel set
$\{\rho\le r\}$. Taking $F=\R^{n}$ flat with $h(\rho)=e^{\rho}$, and $F=\Hyp^{n}$ of curvature $-1$ with $h(\rho)=\cosh\rho$, one recovers the horospherical and the equidistant foliations of $\Hyp^{n+1}$, of principal curvatures $1$ and $\tanh\rho$. Theorem \ref{thm:exactnul}, by contrast, does not survive a change of fibre: its proof uses the rotational invariance and the reflection supplied by the round fibre, and for a general $F$ there is neither a Fourier decomposition nor an even sector, so that $\Ker_0^{\mathrm{ev}}$ is not defined. The spectral content of the identity is nonetheless unaffected: the linearisation of Proposition \ref{prop:jacobi} is insensitive to the fibre, so $L\varphi_a=0$ still holds, and $\varphi_a$ is a Jacobi--Robin field if and only if $r'(a)=0$ or $A_a(\eta,\eta)\equiv 0$ on $\partial\Sigma$. What is lost is the exact dimension count, together with the identification of the Killing-induced fields of Lemma \ref{lem:equivariance} and Proposition \ref{prop:killnul}, both of which rest on the round fibre and the pole.

One hypothesis does not transfer. Lemma  \ref{lem:bdrycurv} rests on the holomorphicity of the Hopf differential, which requires the vanishing of the Codazzi curvature term $\overline R(\cdot,\cdot,\cdot,\nu)$ --- automatic in constant curvature, but not in the present generality; so $A_a(\eta,\eta)\not\equiv 0$ has to be verified rather than deduced. For rotational families the hypothesis at least acquires a transparent geometric meaning: for $n\geq 2$, minimality forces $A(\eta,\eta)=-(n-1)k$ with $k$ the principal curvature in the rotational directions, so $A_a(\eta,\eta)$ vanishes exactly at the totally geodesic points of $\partial\Sigma_a$, and the hypothesis fails only when every boundary point is totally geodesic --- as happens for a totally geodesic disc through $p_0$. For the two families of this paper it is verified directly, in \eqref{eq:Aclosed} and \eqref{eq:Aclosed-mori}.

What the defect identity requires of the ambient is therefore not constant curvature, but that the barrier family be umbilic.
\end{remark}

\subsection*{Calibrations}

\begin{corollary}[Zero-defect case: isometric families]\label{cor:killing}
If the family is generated by a flow of ambient isometries, then $r'\equiv 0$ and \eqref{eq:defect} reads $\mathcal{B}_{r}\varphi_a=0$: normal components of Killing fields preserving the ball are Jacobi--Robin fields. This recovers, in particular, the Jacobi functions with boundary condition $\partial u/\partial\eta=\coth R\cdot u$, $\eta$ the outward conormal, used in \cite[eq. (26)]{LX18}.
\end{corollary}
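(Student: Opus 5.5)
Apply Theorem \ref{thm:defect} to the family generated by the flow. Let $Z$ be a Killing field of $\M^{n+1}_\kappa$ whose flow $\psi_t$ preserves $B(r)$, and let $\Sigma\subset B(r)$ be a free boundary minimal immersion $\Phi$. Set $\Phi_a=\psi_a\circ\Phi$ for $a$ in a small interval $I$ around $0$. The immersions $\Phi_a$ are congruent to $\Phi$, so each is minimal. Since $\psi_a$ maps $B(r)$ onto itself, it also maps $S(r)=\partial B(r)$ onto itself. Being an isometry, it preserves orthogonality of $\Phi_a(\Sigma)$ to $S(r)$, so $\Phi_a$ is free boundary in $B(r)$. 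Thus $\{\Phi_a\}$ is a variable-radius family in the sense of Definition \ref{def:family}, with $r(a)\equiv r$ constant.

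One point needs care: the balls must be centred at the fixed $p_0$. A flow preserving $B(r)$ must fix $p_0$. For $\kappa=1$ and $r\ne\tfrac\pi2$, the ball's centre is determined by the ball; for $r=\tfrac\pi2$, the boundary great sphere determines both hemispheres, and a connected flow cannot swap them. So $r'\equiv 0$.

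Next, $V_a=\partial_a\Phi_a=Z\circ\Phi_a$, hence $\varphi_a=\langle Z,\nu_a\rangle$. At $a=0$, Theorem \ref{thm:defect} gives
\[
\partial_\eta\varphi_0-\ct(r)\varphi_0=r'(0)A(\eta,\eta)=0.
\]
By Proposition \ref{prop:jacobi}, $L\varphi_0=0$, so $\langle Z,\nu\rangle\in\Ker(\Sigma,r)$, which is the claim made in Definition \ref{def:kernel}.

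The comparison with \cite[eq. (26)]{LX18} follows by taking $\kappa=-1$, where $\ct=\coth$ and the condition reads $\partial_\eta u=\coth R\cdot u$ with $\eta$ outward, in the paper's sign convention.

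The only delicate step is the fixed-centre requirement. The rest is a direct substitution into Theorem \ref{thm:defect}.
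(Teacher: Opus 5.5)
Your proposal is correct and follows the argument the paper intends. The paper gives no separate proof: the corollary is the specialization of Theorem \ref{thm:defect}, together with Proposition \ref{prop:jacobi}, to the family $\psi_a\circ\Phi$, for which $r(a)\equiv r$ and $V_a=Z\circ\Phi_a$.

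The step you flag as delicate is not actually needed. Since $\psi_a(B(r))=B(r)$, every $\Phi_a$ already lies in the same ball centred at $p_0$. In any case a closed ball, including a closed hemisphere, determines its centre, so no connectedness argument is required at $r=\tfrac\pi2$.
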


\begin{corollary}[Euclidean degeneration]\label{cor:euclid}
Let $\kappa=0$, let $\Phi$ be a free boundary minimal immersion in the unit ball, and let $\Phi_t=t\,\Phi$ with $t>0$, so that $r(t)=t$. At $t=1$ one has $V=X$, the position field, and $\varphi=\langle X,\nu\rangle$, and \eqref{eq:defect} becomes
\[
\partial_\eta\langle X,\nu\rangle-\langle X,\nu\rangle=A(\eta,\eta)\qquad\text{on }\partial\Sigma.
\]
Since $X=\eta$ along the free boundary, $\langle X,\nu\rangle=0$ there, and the identity reduces to the classical support function relation $\partial_\eta\langle X,\nu\rangle=A(\eta,\eta)$ of \cite{FS16,AN16,Dev19,Tra20}.
\end{corollary}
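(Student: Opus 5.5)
The plan is to specialize Theorem \ref{thm:defect} to the dilation family and then use the free boundary condition to remove the zeroth-order term.

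First I check that $\Phi_t=t\,\Phi$ is a variable-radius family in the sense of Definition \ref{def:family}. Dilations are homotheties of $\R^{n+1}$ centred at $p_0=0$. They send minimal immersions to minimal immersions and preserve angles, so $\Phi_t$ is a free boundary minimal immersion in $B(t)$. The radius function is $r(t)=t$, hence $r'\equiv 1$. The unit normal $\nu_t$ at $\Phi_t(x)$ is the same vector as $\nu$ at $\Phi(x)$, so the normals can be chosen smoothly in $t$. The variation field is $V_t=\partial_t(t\Phi)=\Phi$, and at $t=1$ this is the position field $X$. Hence $\varphi=\langle X,\nu\rangle$.

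Next I substitute into \eqref{eq:defect} at $t=1$. Here $\ct(r)=1/r=1$ and $r'(1)=1$. The Robin defect identity then reads
\[
\partial_\eta\langle X,\nu\rangle-\langle X,\nu\rangle=A(\eta,\eta)\qquad\text{on }\partial\Sigma .
\]

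Finally I use the free boundary condition. In the Euclidean unit ball $\partial_\rho=X/|X|=X$ on $S(1)$, and the free boundary condition gives $\eta=\partial_\rho$. So $X=\eta$ along $\partial\Sigma$, which is tangent to $\Sigma$. Therefore $\langle X,\nu\rangle=0$ on $\partial\Sigma$, and the zeroth-order term drops. This leaves $\partial_\eta\langle X,\nu\rangle=A(\eta,\eta)$.

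The step needing most care is the sign convention. The identity should agree with the cited support function relation, and the paper's $A(X,Y)=\langle\bconn_X\nu,Y\rangle$ makes it agree without any sign flip. As an independent check, $\partial_\eta\langle X,\nu\rangle=\langle\eta,\nu\rangle+\langle X,\bconn_\eta\nu\rangle=A(\eta,X)=A(\eta,\eta)$ on $\partial\Sigma$, which confirms both the formula and the sign.
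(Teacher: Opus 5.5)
Your proposal is correct and follows the paper's argument exactly. You verify that the dilations $\Phi_t=t\,\Phi$ form a variable-radius family with $r(t)=t$ and $V=X$ at $t=1$, substitute $\ct(1)=1$ and $r'(1)=1$ into \eqref{eq:defect}, and then use $X=\eta$ on $\partial\Sigma$ to remove the zeroth-order term; your closing direct computation $\partial_\eta\langle X,\nu\rangle=\langle\eta,\nu\rangle+\langle X,\bconn_\eta\nu\rangle=A(\eta,\eta)$ is a correct independent check of the sign under the paper's convention $A(X,Y)=\langle\bconn_X\nu,Y\rangle$.
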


\begin{remark}[Why the Euclidean radius carries no spectral information]\label{rem:scaling}
Along the Euclidean dilation family the parametric field vanishes identically on $\partial\Sigma$, so it satisfies a Dirichlet, not a Robin, condition. Consistently, under $x\mapsto\lambda x$ the pair $(L,\mathcal{B}_r)$ transforms covariantly, $(|A|^2,\ct)\mapsto(\lambda^{-2}|A|^2,\lambda^{-1}\ct)$; the Jacobi--Robin spectrum is merely rescaled by $\lambda^{-2}$, so the kernel, and in particular the nullity, is invariant along the family. All the content of \eqref{eq:defect} therefore lies in $\kappa\neq 0$.
\end{remark}

\begin{proposition}[Integral obstruction]\label{prop:pairing}
Let $\{\Phi_a\}$ be a variable-radius family. Then for every $a\in I$ and every $\psi\in\Ker(\Sigma_a,r(a))$,
\begin{equation}\label{eq:pairing}
r'(a)\int_{\partial\Sigma}\psi\,A_a(\eta,\eta)=0 .
\end{equation}
In particular, if $r'(a)\neq 0$, every Jacobi--Robin field of $\Sigma_a$ is $L^2(\partial\Sigma)$-orthogonal to $A_a(\eta,\eta)$.
\end{proposition}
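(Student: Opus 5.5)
The plan is a Green's identity pairing $\psi$ against the parametric field $\varphi_a$. Fix $a\in I$ and $\psi\in\Ker(\Sigma_a,r(a))$. Both functions satisfy the Jacobi equation: $L\psi=0$ by the definition of the kernel, and $L\varphi_a=0$ by Proposition \ref{prop:jacobi}. The operator $L=\Delta+|A|^2+n\kappa$ is formally self-adjoint, since its zeroth-order part is multiplication by a function. Green's second identity on the compact manifold $\Sigma$ therefore gives
\[
0=\int_\Sigma\bigl(\psi\,L\varphi_a-\varphi_a\,L\psi\bigr)=\int_{\partial\Sigma}\bigl(\psi\,\partial_\eta\varphi_a-\varphi_a\,\partial_\eta\psi\bigr).
\]

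Next I add and subtract $\ct(r(a))\,\psi\varphi_a$ in the boundary integrand. This rewrites the boundary term as
\[
\int_{\partial\Sigma}\bigl(\psi\,\mathcal{B}_{r(a)}\varphi_a-\varphi_a\,\mathcal{B}_{r(a)}\psi\bigr).
\]
The second term vanishes because $\psi$ satisfies the Robin condition $\mathcal{B}_{r(a)}\psi=0$. For the first term, Theorem \ref{thm:defect} gives $\mathcal{B}_{r(a)}\varphi_a=r'(a)\,A_a(\eta,\eta)$ on $\partial\Sigma$. Substituting yields \eqref{eq:pairing}. When $r'(a)\neq0$, dividing by $r'(a)$ gives the orthogonality statement.

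There is no real obstacle. The one point to check is that all boundary terms are taken with respect to the same outward conormal $\eta$ and the same Robin coefficient $\ct(r(a))$ used in Theorem \ref{thm:defect}, so that the cancellation is exact. The argument also needs enough regularity for Green's identity, which holds because $\psi$, $\varphi_a$ and $\Sigma$ are smooth up to the boundary. Since Green's identity holds for each connected component separately, connectedness of $\Sigma$ is not needed.
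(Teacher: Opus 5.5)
Your proposal is correct and follows essentially the same route as the paper: a Green's identity pairing of $\psi$ against $\varphi_a$, followed by insertion of the defect identity of Theorem \ref{thm:defect} and the Robin condition on $\psi$. Adding and subtracting $\ct(r(a))\,\psi\varphi_a$ to write the boundary integrand as $\psi\,\mathcal{B}_{r(a)}\varphi_a-\varphi_a\,\mathcal{B}_{r(a)}\psi$ is the same cancellation the paper performs by direct substitution.
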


\begin{proof}
Here $\psi$ solves $Lu=0$ by Definition \ref{def:kernel} and $\varphi_a$ by Proposition \ref{prop:jacobi}, so Green's identity gives
\[
0=\int_\Sigma\bigl(\psi L\varphi_a-\varphi_a L\psi\bigr)=\int_{\partial\Sigma}\bigl(\psi\,\partial_\eta\varphi_a-\varphi_a\,\partial_\eta\psi\bigr).
\]
Inserting $\partial_\eta\varphi_a=\ct(r(a))\varphi_a+r'(a)A_a(\eta,\eta)$ from \eqref{eq:defect} and $\partial_\eta\psi=\ct(r(a))\psi$ from $\mathcal{B}_{r(a)}\psi=0$, the terms containing $\ct(r(a))$ cancel and \eqref{eq:pairing} follows.
\end{proof}

Proposition \ref{prop:pairing} is the general form of the pairing argument employed in \cite{P1}. Combined with Lemma \ref{lem:bdrycurv} it gives at once, in the spherical case, the following.

\begin{corollary}\label{cor:signchange}
Let $\{\Phi_a\}$ be a variable-radius family of free boundary minimal annuli in geodesic balls of $\Sph^3$, with $\Sigma$ connected, and let $a$ be such that $r'(a)\neq 0$. Then every nonzero $\psi\in\Ker(\Sigma_a,r(a))$ changes sign on $\partial\Sigma$.
\end{corollary}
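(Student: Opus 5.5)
Corollary \ref{cor:signchange} follows from Proposition \ref{prop:pairing} together with the sign information in Lemma \ref{lem:bdrycurv}.

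Fix $a$ with $r'(a)\neq 0$ and a nonzero $\psi\in\Ker(\Sigma_a,r(a))$. By \eqref{eq:pairing}, dividing by $r'(a)$, we obtain
\[
\int_{\partial\Sigma}\psi\,A_a(\eta,\eta)=0 .
\]
By Lemma \ref{lem:bdrycurv}, applied to the annulus $\Sigma_a$ in the ball $B(r(a))\subset\Sph^3$, the function $A_a(\eta,\eta)$ is nowhere zero on $\partial\Sigma$. Moreover it has the same strict sign on both boundary circles. Replacing $\nu_a$ by $-\nu_a$ if necessary (which changes the sign of $A_a$ and of $\psi$ as a normal component, but does not affect the sign-change conclusion), we may take $A_a(\eta,\eta)>0$ on all of $\partial\Sigma$. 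Here $\partial\Sigma$ is compact, so $A_a(\eta,\eta)\geq c>0$ for some constant $c$.

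Suppose, for contradiction, that $\psi$ does not change sign on $\partial\Sigma$; say $\psi\geq 0$ there. The integrand $\psi\,A_a(\eta,\eta)$ is then nonnegative and continuous with zero integral, so $\psi\equiv 0$ on $\partial\Sigma$. The Robin condition $\mathcal{B}_{r(a)}\psi=0$ then gives $\partial_\eta\psi=\cot(r(a))\,\psi=0$ on $\partial\Sigma$. Thus $\psi$ solves the second-order elliptic equation $L\psi=0$ with vanishing Cauchy data on $\partial\Sigma$. By unique continuation for the Cauchy problem (e.g.\ extend $\psi$ by zero across the boundary of a slightly enlarged surface and apply Aronszajn's theorem, or use Holmgren since $L$ has analytic coefficients on the minimal surface), $\psi\equiv 0$ on the connected surface $\Sigma$. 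This contradicts $\psi\neq 0$. The case $\psi\leq 0$ is identical.

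The only point needing care is the unique continuation step. It is where connectedness of $\Sigma$ is used, and it is standard. The rest is immediate from \eqref{eq:pairing} and Lemma \ref{lem:bdrycurv}.
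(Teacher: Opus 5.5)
Your proof is correct and follows the paper's own argument essentially step for step. Both use \eqref{eq:pairing} together with the constant strict sign of $A_a(\eta,\eta)$ from Lemma \ref{lem:bdrycurv} to force $\psi\equiv 0$ on $\partial\Sigma$, then the Robin condition to kill $\partial_\eta\psi$, then extension by zero across a collar and Aronszajn unique continuation on the connected surface. The normalization $\nu_a\mapsto-\nu_a$ is unnecessary but harmless; note that $\psi$ need not be flipped, since it is an arbitrary kernel element and $\Ker(\Sigma_a,r(a))$ does not depend on the choice of normal.
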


\begin{proof}
By Lemma \ref{lem:bdrycurv} the function $A_a(\eta,\eta)$ has a strict constant sign on $\partial\Sigma$. If $\psi$ did not change sign on $\partial\Sigma$, then $\psi\,A_a(\eta,\eta)$ would have a constant sign there, so \eqref{eq:pairing} together with $r'(a)\neq 0$ would force $\psi\,A_a(\eta,\eta)\equiv 0$, hence $\psi\equiv 0$ on $\partial\Sigma$. The Robin condition would then give $\partial_\eta\psi=\ct(r(a))\psi=0$ on $\partial\Sigma$ as well, so $\psi$ would solve $L\psi=0$ with vanishing Cauchy data along the whole of $\partial\Sigma$. Attach an exterior collar to $\Sigma$, extend the induced metric and the potential $|A|^2+n\kappa$ smoothly across $\partial\Sigma$, and extend $\psi$ by zero: for every test function $\phi$ supported in the enlarged surface, integration by parts on $\Sigma$ together with the vanishing of $\psi$ and $\partial_\eta\psi$ on $\partial\Sigma$ shows that the extension lies in $H^1$ and satisfies the equation weakly across $\partial\Sigma$, hence is smooth by elliptic regularity. It is therefore a solution of a second order elliptic equation which vanishes on an open set; unique continuation \cite{Aro57}, applied along a chain of overlapping balls in the connected surface $\Sigma$, then forces $\psi\equiv 0$, a contradiction.
\end{proof}

\section{The degeneration dictionary}\label{sec:dictionary}

\begin{theorem}[Dictionary]\label{thm:dictionary}
Let $\{\Phi_a\}$ be a variable-radius family and let $a\in I$ be such that $A_a(\eta,\eta)\not\equiv 0$ on $\partial\Sigma$. Then
\[
\varphi_a\in\Ker(\Sigma_a,r(a))\qquad\Longleftrightarrow\qquad r'(a)=0 .
\]
\end{theorem}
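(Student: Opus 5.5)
The plan is to read the dictionary directly off the two facts already established. By Definition \ref{def:kernel}, membership $\varphi_a\in\Ker(\Sigma_a,r(a))$ requires two conditions: $L\varphi_a=0$ on $\Sigma$ and $\mathcal{B}_{r(a)}\varphi_a=0$ on $\partial\Sigma$. The interior equation is automatic by Proposition \ref{prop:jacobi}, which holds for every $a\in I$. The equivalence therefore reduces to the boundary statement
\[
\mathcal{B}_{r(a)}\varphi_a\equiv 0\ \text{on }\partial\Sigma\qquad\Longleftrightarrow\qquad r'(a)=0 .
\]

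For this, Theorem \ref{thm:defect} gives $\mathcal{B}_{r(a)}\varphi_a=r'(a)\,A_a(\eta,\eta)$ pointwise on $\partial\Sigma$.

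The direction $(\Leftarrow)$ is immediate. If $r'(a)=0$ the right-hand side vanishes identically, so the Robin condition holds and $\varphi_a\in\Ker$.

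For $(\Rightarrow)$, assume $\mathcal{B}_{r(a)}\varphi_a\equiv0$. Then $r'(a)\,A_a(\eta,\eta)\equiv 0$ on $\partial\Sigma$. Since $A_a(\eta,\eta)\not\equiv 0$, there is a point $x\in\partial\Sigma$ with $A_a(\eta,\eta)(x)\neq0$. Evaluating at $x$ forces the scalar $r'(a)$ to vanish.

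There is no genuine obstacle. The only care needed is bookkeeping about what $\varphi_a$ means. It is smooth because $\Phi$ is smooth jointly in $(a,x)$ and $\nu_a$ is chosen smoothly, so $\varphi_a\in C^\infty(\Sigma)$ as the kernel definition requires. Also, if a family is only defined up to an $a$-dependent reparametrization, Remark \ref{rem:reparam} guarantees that both sides of \eqref{eq:defect} are unchanged, so the equivalence is intrinsic.

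I would conclude by noting two points. The hypothesis $A_a(\eta,\eta)\not\equiv0$ is used only in the forward direction. In the three-dimensional annulus case it holds automatically by Lemma \ref{lem:bdrycurv}.
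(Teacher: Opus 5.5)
Your proposal is correct and follows essentially the same argument as the paper's proof. You get $L\varphi_a=0$ from Proposition~\ref{prop:jacobi} and read both directions off the defect identity \eqref{eq:defect}, using $A_a(\eta,\eta)\not\equiv 0$ only to force the scalar $r'(a)$ to vanish in the forward direction.
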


\begin{proof}
The identity $L\varphi_a=0$ always holds. If $r'(a)=0$, then $\mathcal{B}_{r(a)}\varphi_a=0$ pointwise by \eqref{eq:defect}, so $\varphi_a\in\Ker(\Sigma_a,r(a))$. Conversely, if $\mathcal{B}_{r(a)}\varphi_a\equiv 0$, then $r'(a)A_a(\eta,\eta)\equiv 0$ on $\partial\Sigma$; since $r'(a)$ is a number and $A_a(\eta,\eta)\not\equiv 0$, necessarily $r'(a)=0$.
\end{proof}

\begin{corollary}[Spectral rigidity forces monotonicity]\label{cor:monotone}
Let $\{\Phi_a\}_{a\in I}$ be a variable-radius family with $\varphi_a\not\equiv 0$ and $A_a(\eta,\eta)\not\equiv 0$ for every $a\in I$. If $\varphi_a\notin\Ker(\Sigma_a,r(a))$ for every $a\in I$, then $r'$ never vanishes on $I$ and $r$ is strictly monotone.
\end{corollary}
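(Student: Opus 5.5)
This is the contrapositive of Theorem \ref{thm:dictionary}, applied pointwise in $a$. Fix $a\in I$. By hypothesis $A_a(\eta,\eta)\not\equiv 0$ on $\partial\Sigma$, so the equivalence $\varphi_a\in\Ker(\Sigma_a,r(a))\Leftrightarrow r'(a)=0$ is available at $a$. Since $\varphi_a\notin\Ker(\Sigma_a,r(a))$, it follows that $r'(a)\neq 0$. As $a$ was arbitrary, $r'$ has no zero on $I$.

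The implication used is only the easy one. If $r'(a)=0$, then \eqref{eq:defect} gives $\mathcal{B}_{r(a)}\varphi_a=0$, and Proposition \ref{prop:jacobi} gives $L\varphi_a=0$. Hence $\varphi_a$ would lie in the kernel, which contradicts the hypothesis. Read literally, neither $A_a(\eta,\eta)\not\equiv 0$ nor $\varphi_a\not\equiv 0$ is needed for this direction. The first is what makes the statement the sharp contrapositive of the dictionary rather than a weaker one. The second excludes the degenerate case, since the zero function always belongs to $\Ker$, so $\varphi_a\equiv 0$ would make the hypothesis fail anyway.

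Monotonicity then follows from the intermediate value theorem. The map $r:I\to(0,\infty)$ is smooth by Definition \ref{def:family}, so $r'$ is continuous on the interval $I$. A continuous function on an interval that never vanishes has constant sign. By the mean value theorem, $r$ is therefore strictly increasing or strictly decreasing on $I$.

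I expect no real obstacle here. The only point needing care is to invoke the defect identity at every $a$ separately, which requires only that $r$ be $C^1$.
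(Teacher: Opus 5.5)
Your proof is correct and is essentially the paper's own argument. The paper's proof is the single line ``immediate from Theorem~\ref{thm:dictionary} and the continuity of $r'$ on the interval $I$'', which is what you spell out: the easy direction via \eqref{eq:defect} and Proposition~\ref{prop:jacobi} gives $r'(a)\neq 0$ pointwise, and continuity gives constant sign and strict monotonicity. Your side remark is also accurate: the hypotheses $A_a(\eta,\eta)\not\equiv 0$ and $\varphi_a\not\equiv 0$ are not needed for the direction actually used.
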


\begin{proof}
Immediate from Theorem \ref{thm:dictionary} and the continuity of $r'$ on the interval $I$.
\end{proof}

\begin{remark}[Degenerate parametric fields]\label{rem:degenerate}
The hypothesis $\varphi_a\not\equiv 0$ is genuinely needed and has a transparent meaning. If $\varphi_a\equiv 0$ and $A_a(\eta,\eta)\not\equiv 0$, then \eqref{eq:defect} forces $r'(a)=0$, so $V_a$ is everywhere tangent to $\Sigma_a$ and, along the boundary, tangent to $\partial\Sigma$; the first variation of the induced metric is then $\partial_ag_a=\mathcal{L}_{V_a^T}g_a$, so the family is stationary to first order modulo reparametrization. In concrete situations $\varphi_a\not\equiv 0$ is checked by direct computation, as we do in Lemma \ref{lem:phinonzero} for the spherical family; for the Mori family in $\Hyp^3$ it follows from the constant nonzero Wronskian recorded in \cite[Lem. 11.1]{P1}.
\end{remark}

\begin{proposition}[First variation of the area]\label{prop:area}
For a variable-radius family $\{\Phi_a\}_{a\in I}$ in the sense of Definition \ref{def:family}, write $|\Sigma_a|$ for the $n$-dimensional volume of $\Sigma_a$ and $|\partial\Sigma_a|$ for the $(n-1)$-dimensional volume of its boundary. Then
\begin{equation}\label{eq:areavar}
\frac{d}{da}\,|\Sigma_a|=r'(a)\,|\partial\Sigma_a|.
\end{equation}
Since $|\partial\Sigma_a|>0$, the critical points of $a\mapsto|\Sigma_a|$ are exactly the critical points of $r$.
\end{proposition}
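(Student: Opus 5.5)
The plan is to apply the first variation formula for the volume to the family $\Phi_a$, with the variation field $V_a$ decomposed as in \eqref{eq:decomp}. For a smooth family of immersions of the compact manifold $\Sigma$,
\[
\frac{d}{da}|\Sigma_a|=-\int_\Sigma H_a\,\varphi_a+\int_{\partial\Sigma}\langle V_a,\eta\rangle .
\]
Here the interior term carries the sign dictated by the convention $A(X,Y)=\langle\bconn_X\nu,Y\rangle$, though its sign is irrelevant below. This formula follows from differentiating the volume density $d\mu_{g_a}$: one has $\partial_a\,d\mu=(\operatorname{div}_\Sigma V_a)\,d\mu$, where the tangential divergence splits as $\operatorname{div}_\Sigma(\varphi_a\nu)=\mp H\varphi_a$ plus $\operatorname{div}_\Sigma V_a^T$, and the divergence theorem turns the latter into the boundary flux $\int_{\partial\Sigma}\langle V_a^T,\eta\rangle$.

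Next, minimality ($H_a\equiv 0$ for every $a$) kills the interior term. Along $\partial\Sigma$, Step 1 of the proof of Theorem \ref{thm:defect} gives exactly $\langle V_a,\eta\rangle=\langle V_a,\partial_\rho\rangle=r'(a)$. This comes from differentiating $\rho(\Phi_a(x))=r(a)$ and using $\eta=\partial_\rho$ on the free boundary. The conormal component of $V_a^T$ coincides with that of $V_a$, since $\nu\perp\eta$. Hence the boundary integral is $r'(a)\,|\partial\Sigma_a|$, which is \eqref{eq:areavar}.

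Finally, the nonvanishing $|\partial\Sigma_a|>0$ holds because each component of $\Sigma$ has nonempty boundary (Definition \ref{def:family}) and $\Phi_a$ restricted to $\partial\Sigma$ is an immersion of an $(n-1)$-manifold. So $\frac{d}{da}|\Sigma_a|=0$ if and only if $r'(a)=0$. Remark \ref{rem:reparam} guarantees the computation is independent of the parametrization, so it applies also to families brought into the form of Definition \ref{def:family} by reparametrization.

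There is no real obstacle. The only point requiring care is justifying the first variation formula for a variation that does not fix the boundary: the boundary term is precisely $\int_{\partial\Sigma}\langle V,\eta\rangle$, with no contribution from boundary-tangential components, because those integrate to zero as a divergence on the closed manifold $\partial\Sigma$. In fact they do not even appear, since only the conormal component enters the flux.
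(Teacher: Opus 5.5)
Your proposal is correct and follows the paper's proof: minimality removes the interior term of the first variation, and Step 1 of the proof of Theorem \ref{thm:defect} gives $\langle V_a,\eta\rangle=r'(a)$ on $\partial\Sigma$, so the boundary flux integrates to $r'(a)\,|\partial\Sigma_a|$. One small slip, immaterial since $H_a\equiv 0$: with the convention $A(X,Y)=\langle\bconn_X\nu,Y\rangle$ one has $\operatorname{div}_\Sigma\nu=H$, so the interior term is $+\int_\Sigma H_a\varphi_a$, not $-\int_\Sigma H_a\varphi_a$.
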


\begin{proof}
Each $\Sigma_a$ is minimal, so the first variation of volume along the family is carried entirely by the boundary,
\[
\frac{d}{da}\,|\Sigma_a|=\int_{\partial\Sigma_a}\langle V_a,\eta\rangle\,dA_\partial,
\]
with $V_a=\partial_a\Phi_a$ the variation field, $\eta$ the outward unit conormal, and $dA_\partial$ the induced boundary volume element. By \eqref{eq:decomp}, $\langle V_a,\eta\rangle=r'(a)$ on $\partial\Sigma_a$, so the integrand equals the constant $r'(a)$ and \eqref{eq:areavar} follows. The final assertion is immediate from $|\partial\Sigma_a|>0$.
\end{proof}

\subsection*{Rotational annuli in three-dimensional balls}

We now specialize to $n=2$. Let $\{\Phi_a\}$ be a variable-radius family of annuli in $\M^3_\kappa$ which is \emph{rotational}: there is a geodesic $\ell$ through $p_0$ such that $\Phi_a$ is equivariant with respect to the group $SO(2)$ of rotations about $\ell$, and $(s,\theta)$ denote respectively the arclength of the profile and the rotation angle. Every $u\in C^\infty(\Sigma)$ splits into Fourier modes in $\theta$; \emph{mode zero} means rotationally invariant. We say that the family is \emph{reflection-symmetric} if there is an orientation-reversing ambient isometry $\sigma$ of $\M^3_\kappa$ fixing $p_0$ with
\begin{equation}\label{eq:reflection}
\sigma\circ\Phi_a(s,\theta)=\Phi_a(-s,\theta)\qquad\text{for every }a,s,\theta.
\end{equation}
We denote by $\Ker_0^{\mathrm{ev}}(\Sigma_a)$ the space of rotationally invariant, $s$-even Jacobi--Robin fields of $\Sigma_a$.

\begin{lemma}[Parity of the parametric field]\label{lem:even}
For a reflection-symmetric rotational variable-radius family of annuli in $\M^3_\kappa$, the parametric field $\varphi_a$ is rotationally invariant and even in $s$.
\end{lemma}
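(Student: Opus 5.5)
Rotational invariance and evenness are the two assertions, and both follow from the same mechanism: an ambient isometry that commutes with the whole family acts on $\varphi_a$ by precomposition with a diffeomorphism of $\Sigma$, up to a sign from the orientation. Write $\Phi_a(s,\theta)$ and let $R_\alpha$ be the rotation by angle $\alpha$ about $\ell$. Equivariance means $R_\alpha\circ\Phi_a(s,\theta)=\Phi_a(s,\theta+\alpha)$ for all $a$.

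\emph{Rotational invariance.} Differentiate the equivariance relation in $a$. This gives $dR_\alpha\,V_a(s,\theta)=V_a(s,\theta+\alpha)$. Next, choose the unit normal equivariantly, $dR_\alpha\,\nu_a(s,\theta)=\nu_a(s,\theta+\alpha)$. This is legitimate because $R_\alpha$ preserves orientation and $SO(2)$ is connected, so the equivariant choice is consistent with the smooth choice of $\nu_a$ fixed in Definition \ref{def:family}. Since $R_\alpha$ is an isometry,
\[
\varphi_a(s,\theta+\alpha)=\langle dR_\alpha V_a(s,\theta),dR_\alpha\nu_a(s,\theta)\rangle=\varphi_a(s,\theta).
\]

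\emph{Evenness.} Differentiate \eqref{eq:reflection} in $a$ to get $d\sigma\,V_a(s,\theta)=V_a(-s,\theta)$. Then relate $d\sigma\,\nu_a(s,\theta)$ to $\nu_a(-s,\theta)$. Because $\sigma$ maps $\Sigma_a$ to itself and is an isometry, $d\sigma\,\nu_a(s,\theta)=\epsilon\,\nu_a(-s,\theta)$ with $\epsilon=\pm1$; by continuity and connectedness of $\Sigma$, the sign $\epsilon$ is constant on $\Sigma$ and in $a$. Determining $\epsilon$ is the main step. The normal can be written as $\nu=\star(\Phi_s\wedge\Phi_\theta)$ up to a fixed sign, with $\star$ the ambient Hodge star. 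Under $\sigma$, the frame $(\Phi_s,\Phi_\theta)$ at $(s,\theta)$ is sent to $(-\Phi_s,\Phi_\theta)$ at $(-s,\theta)$, which contributes one sign. Since $\sigma$ reverses ambient orientation, $\sigma_*\circ\star=-\star\circ\sigma_*$, which contributes a second sign. Hence $\epsilon=(-1)(-1)=+1$, and
\[
\varphi_a(-s,\theta)=\langle d\sigma V_a,d\sigma\nu_a\rangle(s,\theta)=\varphi_a(s,\theta).
\]

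The one point needing care is the orientation bookkeeping above, because an orientation-preserving reflection-type symmetry would instead give an odd field. This is why \eqref{eq:reflection} requires $\sigma$ to be orientation-reversing. A quick sanity check on the rotational profile confirms $\epsilon=+1$: a reflection across the totally geodesic sphere orthogonal to $\ell$ through $p_0$ maps the upper normal of the annulus to the normal at the mirror point with the same orientation class. By Remark \ref{rem:reparam}, none of this depends on the particular parametrization of the family.
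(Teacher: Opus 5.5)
Your proof is correct and follows the paper's argument. You differentiate \eqref{eq:reflection} in $a$, show that $d\sigma\,\nu_a(s,\theta)=\nu_a(-s,\theta)$ because the orientation reversal of $(s,\theta)\mapsto(-s,\theta)$ on $\Sigma$ cancels that of $\sigma$ on the ambient space, and conclude using that $\sigma$ is an isometry; the only cosmetic difference is that you track the orientations with the Hodge star identity $\sigma_*\circ\star=-\star\circ\sigma_*$, where the paper compares oriented frames $(f_1,f_2,\nu)$ directly.
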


\begin{proof}
Rotational invariance of $\varphi_a$ follows from the equivariance of the family and of the smoothly chosen $\nu_a$.

We claim that $d\sigma\,\nu_a(s,\theta)=\nu_a(-s,\theta)$. The normal $\nu_a$ is determined by the convention that a positively oriented frame of $T\Sigma_a$ followed by $\nu_a$ is a positively oriented frame of $T\M^3_\kappa$. Let $\iota(s,\theta)=(-s,\theta)$; then $\iota$ reverses the orientation of $\Sigma$, since $\iota^*(ds\wedge d\theta)=-\,ds\wedge d\theta$, and $\sigma$ reverses the orientation of the ambient space by hypothesis. Let $(e_1,e_2)$ be a positively oriented frame at $(s,\theta)$ and set $f_i:=\Phi_{a*}d\iota(e_i)$, a frame of the tangent plane at the image of $(-s,\theta)$. Then $(\Phi_{a*}e_1,\Phi_{a*}e_2,\nu_a(s,\theta))$ is positively oriented, so its image under $d\sigma$, which by \eqref{eq:reflection} equals $(f_1,f_2,d\sigma\,\nu_a(s,\theta))$, is negatively oriented. On the other hand $(d\iota(e_1),d\iota(e_2))$ is a negatively oriented frame at $(-s,\theta)$, so $(f_1,f_2,\nu_a(-s,\theta))$ is negatively oriented as well. The two normals therefore coincide.

Differentiating \eqref{eq:reflection} in $a$ gives $d\sigma\,(\partial_a\Phi_a)(s,\theta)=(\partial_a\Phi_a)(-s,\theta)$. Since $\sigma$ is an isometry,
\[
\varphi_a(-s)=\langle(\partial_a\Phi_a)(-s),\nu_a(-s)\rangle=\langle d\sigma\,\partial_a\Phi_a(s),d\sigma\,\nu_a(s)\rangle=\langle\partial_a\Phi_a(s),\nu_a(s)\rangle=\varphi_a(s).\qedhere
\]
\end{proof}

\begin{theorem}[Exact rotationally invariant even nullity]\label{thm:exactnul}
Let $\{\Phi_a\}$ be a reflection-symmetric rotational variable-radius family of annuli in $\M^3_\kappa$ such that, for every $a$, $\varphi_a\not\equiv 0$ and $A_a(\eta,\eta)\neq 0$ on $\partial\Sigma$. Then, for every $a$,
\[
\dim\Ker_0^{\mathrm{ev}}(\Sigma_a)=
\begin{cases}
1,&r'(a)=0,\\
0,&r'(a)\neq 0;
\end{cases}
\]
equivalently, $a\mapsto\dim\Ker_0^{\mathrm{ev}}(\Sigma_a)$ is the indicator function of the critical set $\{r'=0\}$ of the radius map.
\end{theorem}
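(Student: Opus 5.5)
The plan is to reduce the mode-zero, even sector to an ODE problem on a half-profile, and then to identify the unique candidate solution with the parametric field $\varphi_a$. For a rotationally invariant function $u=u(s)$, the metric $ds^2+b(s)^2d\theta^2$ turns $Lu=0$ into the second-order linear ODE
\[
(b\,u')'+b\,(|A_a|^2+2\kappa)\,u=0,
\]
with smooth coefficients on the profile interval $[-s_0,s_0]$. Here $b>0$ because $\Sigma_a$ is an annulus and the profile stays off the axis. The reflection symmetry \eqref{eq:reflection} makes $b$ and $|A_a|^2$ even in $s$. The two boundary circles are exchanged by $\sigma$, and at $s=\pm s_0$ the outward conormal is $\pm\partial_s$. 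So an even $u$ satisfies the Robin condition on both circles as soon as it satisfies it at $s=s_0$.

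An $s$-even solution of the ODE is characterised by the initial condition $u'(0)=0$. Uniqueness for the initial value problem therefore forces the even solutions to form a one-dimensional space, spanned by the solution $u_0$ with $u_0(0)=1$ and $u_0'(0)=0$. It follows that
\[
\dim\Ker_0^{\mathrm{ev}}(\Sigma_a)\le 1,
\]
with equality exactly when $u_0$ satisfies $u_0'(s_0)=\ct(r(a))\,u_0(s_0)$.

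Next I bring in $\varphi_a$. By Proposition~\ref{prop:jacobi} and Lemma~\ref{lem:even}, $\varphi_a$ is a rotationally invariant, even solution of the ODE. I first reparametrize as in Remark~\ref{rem:reparam}, if needed, so that $\varphi_a$ is a function on the fixed domain. Since $\varphi_a\not\equiv 0$, it is a nonzero multiple of $u_0$. Its value at $0$ cannot vanish: otherwise both $\varphi_a(0)$ and $\varphi_a'(0)$ would vanish, and uniqueness would force $\varphi_a\equiv 0$. So $u_0=\varphi_a/\varphi_a(0)$.

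The Robin defect of $u_0$ at $s_0$ is then given by Theorem~\ref{thm:defect}:
\[
\mathcal B_{r(a)}u_0=\frac{r'(a)\,A_a(\eta,\eta)}{\varphi_a(0)}.
\]
Because $A_a(\eta,\eta)\neq 0$ on $\partial\Sigma$, this vanishes if and only if $r'(a)=0$. This is Theorem~\ref{thm:dictionary} restricted to the even mode-zero sector, and it gives the dimension $1$ when $r'(a)=0$ and $0$ otherwise.

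The main point needing care is the reduction itself, and I will verify three things there.

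\begin{itemize}
\item The first is that the orientation and sign conventions make $\varphi_a$ genuinely even rather than odd. This is the content of Lemma~\ref{lem:even}, which I invoke directly.
\item The second is that the conormal at $-s_0$ is $-\partial_s$. Then $\partial_\eta u(-s_0)=-u'(-s_0)=u'(s_0)$ for even $u$, so the Robin conditions at the two ends coincide.
\item The third is that $b$ does not vanish on $[-s_0,s_0]$, so the ODE is regular and the initial value uniqueness argument applies. This holds since the surface is an embedded annulus disjoint from the axis $\ell$.
\end{itemize}

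Everything else is a direct assembly of the earlier results.
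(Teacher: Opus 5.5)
Your proposal is correct and follows the paper's own proof essentially step for step. Both arguments reduce to the mode-zero Sturm--Liouville equation $(bu')'+bqu=0$ with $s$-even coefficients, note that the even solutions form a one-dimensional space cut out by $u'(0)=0$, and identify that space with $\R\varphi_a$ via Proposition \ref{prop:jacobi} and Lemma \ref{lem:even}. The conclusion then comes from the defect identity \eqref{eq:defect} together with $A_a(\eta,\eta)\neq 0$; your normalization $u_0=\varphi_a/\varphi_a(0)$ is just the paper's $c\,\varphi_a$ with a particular $c$.

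One small correction: embeddedness is not a hypothesis of the theorem. You do not need it, since $b>0$ already follows from $\Phi_a$ being an immersion, because $|\partial_\theta\Phi_a|=b$.
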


\begin{proof}
By Remark \ref{rem:reparam} neither the parametric field nor its conormal derivative, viewed as data on the image surface, is affected if $\Phi_a$ is precomposed with a smooth family of boundary-preserving diffeomorphisms; we may therefore work in the arclength gauge. Write the induced metric of $\Sigma_a$ as $ds^2+b(s)^2d\theta^2$ on $[-s_0,s_0]\times\Sph^1$, with $b>0$ and $s_0=s_0(a)$. In mode zero the Jacobi equation $Lu=0$ reduces to the Sturm--Liouville equation
\[
(b\,u')'+b\,q\,u=0,\qquad q=|A|^2+2\kappa .
\]
By \eqref{eq:reflection} the isometry $\sigma$ maps $\Sigma_a$ to itself, inducing $s\mapsto-s$; hence $b$ and $q$ are even in $s$ and the equation is invariant under $s\mapsto-s$. Consequently the involution $\iota u(s):=u(-s)$ maps the two-dimensional solution space to itself, and in terms of the Cauchy data at $s=0$ it acts by $(u(0),u'(0))\mapsto(u(0),-u'(0))$; its $+1$-eigenspace, that is the space of even solutions, is exactly one-dimensional, cut out by $u'(0)=0$. Since $\varphi_a$ is a nontrivial even mode-zero solution by Proposition \ref{prop:jacobi} and Lemma \ref{lem:even}, that space is $\R\varphi_a$.

An element $c\,\varphi_a$ lies in $\Ker_0^{\mathrm{ev}}(\Sigma_a)$ if and only if it satisfies the Robin condition, that is, by \eqref{eq:defect}, if and only if $c\,r'(a)\,A_a(\eta,\eta)\equiv 0$ on $\partial\Sigma$. As $A_a(\eta,\eta)\neq 0$, this holds precisely when $c=0$ or $r'(a)=0$, which is the stated dimension count.
\end{proof}

\begin{remark}[On the conformal formulation of \cite{NZ1}, and what the defect adds]\label{rem:conformal}
In \cite[\S 2.5]{NZ1} the free boundary problem in $B_R\subset\Sph^3$ is recast, by a radial conformal change, as a \emph{weighted} free boundary minimal surface problem in the fixed hemisphere $(\Sph^3,\partial B_{\pi/2})$, the cap radius entering only through the weight $f_R$, and \cite[Lem. 2.6]{NZ1} identifies the two stability operators up to conjugation. In that picture the barrier does not move, and one implication of Theorem \ref{thm:dictionary} becomes immediate: if $R'(a_*)=0$ then $\tfrac{d}{da}f_{R(a)}$ vanishes at $a_*$, so the variation field of the family is, to first order, a variation \emph{within a single} weighted free boundary problem, and its normal component lies in the kernel of the corresponding index form. 
The reverse implication, and the exact dimension count that turns the nullity into an indicator function, are what the defect identity supplies and the conformal picture does not. Reading off $R'(a)=0$ from the mere membership of $\varphi_a$ in the kernel requires, in formulation (C), an
\emph{interior} nonvanishing --- that $\langle\nabla\rho,\nu_a\rangle$ is not
identically zero on $\Sigma_a$ --- which carries no information on the dimension of the kernel. Theorem \ref{thm:defect} instead concentrates the
entire obstruction on the boundary, as the single scalar multiple $r'(a)$ of $A_a(\eta,\eta)$; since $A_a(\eta,\eta)$ is nowhere zero there (Lemma \ref{lem:bdrycurv}), the implication becomes the equivalence of Theorem \ref{thm:dictionary}, and, through the one-dimensionality of the even mode-zero Jacobi space, the exact identity $\dim\Ker_0^{\mathrm{ev}}(\Sigma_a)=\mathbf{1}_{\{r'=0\}}(a)$ of Theorem \ref{thm:exactnul}. This is the sense in which the defect adds what the conformal reformulation, by itself, does not.
\end{remark}

\begin{lemma}[Ball-preserving Killing fields and Fourier modes]\label{lem:equivariance}
Let $r\in(0,\pi)$, and let $\Sigma\subset B(r)\subset\Sph^3$ be a surface with unit normal $\nu$, invariant under the group $SO(2)$ of rotations about a geodesic $\ell$ through $p_0$. Let $\mathfrak{k}_0$ denote the algebra of Killing fields of $\Sph^3$ whose flows preserve $B(r)$, and let $Z_0$ generate the rotations about $\ell$. Then $\mathfrak{k}_0$ is the isotropy algebra of $p_0$, isomorphic to $\mathfrak{so}(3)$, and it decomposes under the adjoint action of $SO(2)$ as $\mathfrak{k}_0=\R Z_0\oplus\mathfrak{m}$ with $\mathfrak{m}$ two-dimensional of weight one. Moreover $\langle Z_0,\nu\rangle\equiv 0$, and $\langle Z,\nu\rangle$ lies in the Fourier modes $|k|=1$ for every $Z\in\mathfrak{m}$. In particular the normal component of every element of $\mathfrak{k}_0$ has vanishing mode-zero part.
\end{lemma}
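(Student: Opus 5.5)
First I would identify $\mathfrak{k}_0$. If the flow of a Killing field $Z$ preserves $B(r)$, then for $r\neq\tfrac\pi2$ it fixes the center $p_0$. For $r<\tfrac\pi2$ this holds because $p_0$ is the unique point at maximal distance from $\partial B(r)$. For $r>\tfrac\pi2$ it holds because the flow then preserves the complementary ball $B(\pi-r)$ centred at $-p_0$, hence fixes $-p_0$, hence fixes $p_0$. In either case the flow preserves $S(r)$, and an isometry of $\Sph^3$ preserving a geodesic sphere of radius $r\neq\tfrac\pi2$ preserves its pair of centres and so fixes $p_0$. For $r=\tfrac\pi2$ the ball is the closed hemisphere, and isometries preserving it fix the pole $p_0$ of the boundary great sphere. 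Conversely, every isometry fixing $p_0$ preserves $B(r)$. Therefore $\mathfrak{k}_0$ is the isotropy algebra of $p_0$ inside $\mathfrak{so}(4)$, which is $\mathfrak{so}(3)\cong\mathfrak{so}(T_{p_0}\Sph^3)$ via $Z\mapsto\bconn Z(p_0)$.

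Next, to get the weight decomposition, I would write $\R^4=\R e_0\oplus\R^3$ with $p_0=e_0$ and $\ell$ the great circle through $e_0$ and $e_3$. Then $\mathfrak{k}_0=\mathfrak{so}(3)$ acts on $\mathrm{span}(e_1,e_2,e_3)$, and $Z_0$ is the rotation in the $e_1e_2$-plane. By Lemma-free linear algebra, the adjoint action of $\exp(tZ_0)$ fixes $Z_0$ and rotates the span $\mathfrak{m}$ of the generators $Z_1,Z_2$ (rotations in the $e_2e_3$ and $e_3e_1$ planes) by angle $t$. So $\mathfrak{m}$ has weight one, and $\mathfrak{k}_0=\R Z_0\oplus\mathfrak{m}$.

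For the normal components, I would let $R_t$ be the rotation by angle $t$ about $\ell$, acting on $\Sigma$ by $\theta\mapsto\theta+t$. By $SO(2)$-invariance of $\Sigma$ and a consistent choice of normal, $dR_t\,\nu(s,\theta)=\nu(s,\theta+t)$. Rotational invariance of $\nu$ requires a short argument: $R_t$ is connected to the identity, so it preserves orientations and maps the normal continuously to $\pm\nu$, and the sign is $+$ at $t=0$. For a Killing field $Z$ set $f_Z=\langle Z,\nu\rangle$. Then
\[
f_Z(s,\theta+t)=\langle Z,dR_t\nu\rangle=\langle dR_{-t}Z,\nu\rangle(s,\theta)=f_{\mathrm{Ad}(R_{-t})Z}(s,\theta).
\]
For $Z_0$, the field is tangent to the $SO(2)$-orbits, which lie in $\Sigma$, so $f_{Z_0}\equiv0$. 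For $\mathfrak{m}$, I would write $Z_\pm=Z_1\mp iZ_2$, which satisfy $\mathrm{Ad}(R_{-t})Z_\pm=e^{\pm it}Z_\pm$. This gives $f_{Z_\pm}(s,\theta+t)=e^{\pm it}f_{Z_\pm}(s,\theta)$, so $f_{Z_\pm}(s,\theta)=e^{\pm i\theta}f_{Z_\pm}(s,0)$, which is pure mode $\pm1$. Taking real linear combinations, each $f_Z$ with $Z\in\mathfrak{m}$ lies in the modes $|k|=1$, and the mode-zero part of every $f_Z$ with $Z\in\mathfrak{k}_0$ vanishes.

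The main obstacle I expect is the first step, in particular the case $r>\tfrac\pi2$ and the borderline $r=\tfrac\pi2$. There I would argue through the uniqueness of the centre pair $\{p_0,-p_0\}$ of $S(r)$. Since $-p_0\notin B(r)$ when $r<\pi$, a flow preserving $B(r)$ cannot swap the two centres, so it fixes $p_0$.
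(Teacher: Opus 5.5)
Your proof is correct and follows the paper's route. You identify $\mathfrak{k}_0$ with the isotropy algebra of $p_0$, note that $\langle Z_0,\nu\rangle\equiv 0$, and use the equivariance $\langle Z,\nu\rangle\circ R_t=\langle \mathrm{Ad}(R_{-t})Z,\nu\rangle$ to place $\mathfrak{m}$ in the modes $|k|=1$; your complexified weight vectors $Z_\pm$ simply make the paper's isotypic-component step explicit.

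The one place you deviate is unnecessary, though valid. The paper's distance-to-boundary argument already works for every $r\in(0,\pi)$: for $x\in B(r)$ one has $\dist(x,S(r))=r-\rho(x)$, with the lower bound from the $1$-Lipschitz property of $\rho$ and the upper bound from the outward radial geodesic, and this is uniquely maximized at $p_0$. So the case split through the complementary ball $B(\pi-r)$ about $-p_0$ or the centre pair $\{\pm p_0\}$ is not needed.
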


\begin{proof}
Let $Z\in\mathfrak{k}_0$. Each isometry $F$ in its flow preserves $B(r)$, hence also $\partial B(r)$, so it preserves the function $x\mapsto\dist(x,\partial B(r))$ on $B(r)$. Since $\rho$ is $1$-Lipschitz and the radial geodesic issuing from $x$ meets $\partial B(r)$ after length $r-\rho(x)$, that function equals $r-\rho$, whose unique maximum point is $p_0$; therefore $F(p_0)=p_0$, and $Z$ lies in the isotropy algebra of $p_0$, isomorphic to $\mathfrak{so}(3)$. Note that this argument is insensitive to whether $r$ exceeds $\tfrac\pi2$. Conversely, every Killing field fixing $p_0$ preserves each geodesic ball centred at $p_0$. Under the adjoint action of the axial $SO(2)$ that algebra decomposes as the line $\R Z_0$ together with a two-dimensional subspace $\mathfrak{m}$ of weight one. The flow of $Z_0$ preserves $\Sigma$ setwise, so $Z_0$ is tangent to $\Sigma$ and $\langle Z_0,\nu\rangle\equiv 0$. For $Z\in\mathfrak{m}$, let $\mathcal{R}_\alpha$ denote the ambient rotation by $\alpha$ about $\ell$; it preserves $\Sigma$ and its unit normal, so
\[
\langle Z,\nu\rangle\circ\mathcal{R}_\alpha=\langle \mathrm{Ad}_{\mathcal{R}_\alpha^{-1}}Z,\nu\rangle .
\]
Hence $Z\mapsto\langle Z,\nu\rangle$ is $SO(2)$-equivariant from the weight-one representation $\mathfrak{m}$ into $C^\infty(\Sigma)$, so its image lies in the weight-one isotypic component, that is, in the Fourier mode $|k|=1$.
\end{proof}

\begin{proposition}[Killing-induced nullity]\label{prop:killnul}
Let $\Sigma\subset B(r)\subset\Sph^3$ be a free boundary minimal annulus invariant under the group of rotations about a geodesic $\ell$ through $p_0$. Then $\nul(\Sigma)\geq 2$, the two elements being induced by the rotations about the geodesics through $p_0$ orthogonal to $\ell$, and lying in the modes $|k|=1$.
\end{proposition}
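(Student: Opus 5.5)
The plan is to obtain the two Jacobi--Robin fields from Definition \ref{def:kernel} and Corollary \ref{cor:killing}, and then to prove that they are linearly independent using Lemma \ref{lem:equivariance}. Let $\mathfrak{m}\subset\mathfrak{k}_0$ be the two-dimensional weight-one subspace given by Lemma \ref{lem:equivariance}. It is spanned by the rotations $Z_1,Z_2$ about the two geodesics through $p_0$ orthogonal to $\ell$. Their flows fix $p_0$ and therefore preserve $B(r)$. Corollary \ref{cor:killing} then gives $u_i:=\langle Z_i,\nu\rangle\in\Ker(\Sigma,r)$. By Lemma \ref{lem:equivariance}, each $u_i$ lies in the modes $|k|=1$. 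It remains to show that the linear map $\mathfrak{m}\to\Ker(\Sigma,r)$, $Z\mapsto\langle Z,\nu\rangle$, is injective; this gives $\nul(\Sigma)\geq 2$.

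For injectivity I will use the equivariance already established in Lemma \ref{lem:equivariance}. The kernel of this map is an $\mathrm{Ad}(SO(2))$-invariant subspace of $\mathfrak{m}$. Since $\mathfrak{m}$ is irreducible of weight one, that kernel is either $0$ or all of $\mathfrak{m}$. It therefore suffices to exclude the case where $\langle Z,\nu\rangle\equiv 0$ for every $Z\in\mathfrak{m}$.

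Suppose this case occurs. Then $Z_0,Z_1,Z_2$ are all tangent to $\Sigma$, so the whole isotropy group $SO(3)$ of $p_0$ preserves $\Sigma$ (its orbits stay in $\Sigma$ where $\Sigma$ is open and dense, by connectedness of the flows). The $SO(3)$-orbits through points other than $p_0$ and its antipode are the two-dimensional geodesic spheres $S(t)$. A connected surface invariant under $SO(3)$ would therefore be contained in a single sphere $S(t)$. That sphere is umbilic with $A=\cot(t)\,g$, so it is minimal only when $t=\pi/2$. In that case $\Sigma$ would lie in the equatorial great sphere, whose principal curvatures vanish, and $\Sigma$ would be totally geodesic. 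This contradicts Lemma \ref{lem:bdrycurv}, which says $|A|\neq 0$ on a free boundary minimal annulus. Independently, the annulus cannot lie in a sphere $S(t)$ at all, because its boundary lies on $S(r)$ while it meets $S(r)$ orthogonally. So the map is injective and $u_1,u_2$ are independent.

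The main obstacle is this injectivity step, and specifically the claim that a surface with all of $\mathfrak{k}_0$ tangent must be a union of orbits. As an alternative that avoids the group argument, I would compute $u_1$ directly in the rotational parametrization: $\langle Z_1,\nu\rangle=f(s)\cos\theta$ for an explicit profile function $f$. Then $u_2=f(s)\sin\theta$ is its rotation by $\pi/2$, and independence reduces to $f\not\equiv 0$. The condition $f\equiv 0$ means the profile is tangent to the $Z_1$-orbits, which again forces $\Sigma$ to lie in a geodesic sphere about $p_0$, and this is excluded as above.
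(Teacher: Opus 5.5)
Your proposal is correct and follows the paper's proof. The fields of $\mathfrak{m}$ are Jacobi--Robin by Corollary~\ref{cor:killing} and lie in $|k|=1$ by Lemma~\ref{lem:equivariance}; injectivity holds because a vanishing normal component would force $SO(3)$-invariance, hence a distance sphere, which an annulus meeting $S(r)$ orthogonally cannot be. The one loose point is your parenthetical claim that tangent Killing fields preserve $\Sigma$: a field tangent to a surface with boundary need not have a flow preserving it. The paper fixes this by noting that $Z$ is also tangent to $S(r)$, hence to $\partial\Sigma$. Alternatively, since your irreducibility step puts all of $\mathfrak{m}$ in the kernel, you can avoid flows entirely: at a boundary point $Z_0,Z_1,Z_2$ span $T S(r)$, which would then coincide with $T\Sigma$, contradicting $\eta=\partial_\rho\in T\Sigma$.
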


\begin{proof}
Those rotations fix $p_0$, hence preserve $B(r)$ by Lemma \ref{lem:equivariance}, and induce Jacobi--Robin fields by Corollary \ref{cor:killing}. Let $Z$ be a nonzero combination of their generators and suppose $\langle Z,\nu\rangle\equiv 0$. Then $Z$ is tangent to $\Sigma$; being tangent to $\partial B(r)$ as well, it is tangent to $\partial\Sigma$, so its flow preserves $\Sigma$. As $\Sigma$ is already invariant under the rotations about $\ell$, it would then be invariant under rotations about two distinct geodesics through $p_0$, hence under the group they generate, which is the full isotropy group $SO(3)$ of $p_0$. The orbits of that group are the distance spheres from $p_0$ together with the two fixed points $\pm p_0$, so the connected surface $\Sigma$, being a union of orbits and two-dimensional, would have to be a single distance sphere: a zero-dimensional orbit cannot occur in a union of orbits which is a two-dimensional manifold, and two distinct distance spheres are disjoint. No annulus is a distance sphere. Hence $Z\mapsto\langle Z,\nu\rangle$ is injective on the two-dimensional space of such generators; those generators span the subspace $\mathfrak{m}$ of Lemma \ref{lem:equivariance}, so its image lies in the modes $|k|=1$.
\end{proof}

\begin{theorem}[Structure at a critical point of the radius]\label{thm:fold}
In the situation of Theorem \ref{thm:exactnul}, let $a_*$ satisfy $r'(a_*)=0$. Then:
\begin{enumerate}
\item[\textup{(i)}] $\varphi_{a_*}$ is a nonzero rotationally invariant, $s$-even Jacobi--Robin field of $\Sigma_{a_*}$; in particular $\dim\Ker_0^{\mathrm{ev}}(\Sigma_{a_*})=1$ and, when $\kappa=1$ and the axis passes through $p_0$, $\nul(\Sigma_{a_*})\geq 3$ by Proposition \ref{prop:killnul};
\item[\textup{(ii)}] the sign of $A_a(\eta,\eta)$ on each connected component of $\partial\Sigma$ is independent of $a$, and it is the same on both components; consequently, if $r'$ changes sign at $a_*$, then the Robin defect $\mathcal{B}_{r(a)}\varphi_a=r'(a)A_a(\eta,\eta)$ reverses sign across $a_*$;
\item[\textup{(iii)}] if moreover $\kappa=1$ and the rotation axis is a geodesic through $p_0$, then $\varphi_{a_*}$ is not induced by any Killing field of $\Sph^3$ whose flow preserves $B(r(a_*))$.
\end{enumerate}
\end{theorem}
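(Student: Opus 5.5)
For item (i), I would simply combine results already established. Proposition \ref{prop:jacobi} gives $L\varphi_{a_*}=0$. Lemma \ref{lem:even} shows that $\varphi_{a_*}$ is rotationally invariant and $s$-even. The hypothesis $\varphi_{a_*}\not\equiv 0$ is part of the standing assumptions of Theorem \ref{thm:exactnul}. Finally, the defect identity \eqref{eq:defect} with $r'(a_*)=0$ gives $\mathcal B_{r(a_*)}\varphi_{a_*}=0$. Theorem \ref{thm:exactnul} then gives $\dim\Ker_0^{\mathrm{ev}}=1$.

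For the bound $\nul\geq 3$ in the case $\kappa=1$ with axis through $p_0$, I would add $\varphi_{a_*}$ to the two Killing-induced fields of Proposition \ref{prop:killnul}. Those two fields lie in the Fourier modes $|k|=1$, while $\varphi_{a_*}$ lies in mode zero. Since distinct Fourier modes are $L^2$-orthogonal, the three fields are linearly independent.

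For item (ii), the first step is constancy of sign along each boundary component. By hypothesis $A_a(\eta,\eta)\neq 0$ on $\partial\Sigma$ for every $a$. It depends continuously on $(a,x)\in I\times\partial\Sigma$, because the family is smooth and $\nu_a$ is chosen smoothly. Each component $I\times C$ is connected, so $A_a(\eta,\eta)$ has a constant sign on it.

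The second step is showing the two components carry the same sign. In the rotational, reflection-symmetric setting I would use the reflection $\sigma$. By the proof of Lemma \ref{lem:even}, $d\sigma\,\nu_a(s)=\nu_a(-s)$. The map $\sigma$ is an ambient isometry taking $\Sigma_a$ to itself, and it swaps the two boundary circles $s=\pm s_0$, sending outward conormal to outward conormal. Hence $A_a(\eta,\eta)$ takes equal values at $s_0$ and $-s_0$. (Lemma \ref{lem:bdrycurv} gives the same conclusion in the minimal case.)

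The final claim of (ii) follows from the factorization $\mathcal B_{r(a)}\varphi_a=r'(a)A_a(\eta,\eta)$. The second factor has a fixed sign, so the defect changes sign exactly when $r'$ does.

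For item (iii), which I expect to be the only real content, I would argue by contradiction. Suppose $\varphi_{a_*}=\langle Z,\nu\rangle$ for some Killing field $Z$ whose flow preserves $B(r(a_*))$. By Lemma \ref{lem:equivariance}, $Z$ lies in the isotropy algebra $\mathfrak{k}_0=\R Z_0\oplus\mathfrak m$. The normal component $\langle Z,\nu\rangle$ therefore has vanishing mode-zero part. But $\varphi_{a_*}$ is rotationally invariant, so it equals its own mode-zero part, which would force $\varphi_{a_*}\equiv 0$. This contradicts the nonvanishing hypothesis.

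The point needing care is the claim that the isotropy algebra exhausts all ball-preserving Killing fields even when $r>\tfrac\pi2$. Lemma \ref{lem:equivariance} already handles this through the function $r-\rho$, so the obstacle is resolved by citation.
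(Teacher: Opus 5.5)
Your proposal is correct and follows the paper's proof essentially step for step. Item (i) comes from \eqref{eq:defect} at $r'(a_*)=0$, Theorem \ref{thm:exactnul}, and mode separation against the two Killing-induced fields of Proposition \ref{prop:killnul}; (ii) from the constant sign of a nowhere-vanishing function on the connected sets $I\times C$, together with the reflection $\sigma$ matching normals and outward conormals on the two circles; and (iii) from the vanishing mode-zero part in Lemma \ref{lem:equivariance}, with your explicit citation of Lemma \ref{lem:even} and of $L^2$-orthogonality of modes only spelling out what the paper leaves implicit.
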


\begin{proof}
(i) By $r'(a_*)=0$ and \eqref{eq:defect} we get $\mathcal{B}_{r(a_*)}\varphi_{a_*}=0$; moreover $L\varphi_{a_*}=0$ by Proposition \ref{prop:jacobi} and $\varphi_{a_*}\not\equiv 0$ by hypothesis. The dimension count is Theorem \ref{thm:exactnul}. For the last assertion, the two Killing-induced fields of Proposition \ref{prop:killnul} lie in the modes $|k|=1$ while $\varphi_{a_*}$ lies in mode $0$, so the three are linearly independent.

(ii) The function $(a,x)\mapsto A_a(\eta,\eta)(x)$ is smooth on $I\times\partial\Sigma$ and nowhere zero by hypothesis, hence of constant sign on each connected component of $I\times\partial\Sigma$; since $I$ is an interval and each boundary circle is connected, its sign on a given circle does not depend on $a$. That the two circles carry the same sign follows from the reflection symmetry: with $p=(s_0,\theta)$ and $\bar p=(-s_0,\theta)$, \eqref{eq:reflection} gives $d\sigma\,\eta(p)=\eta(\bar p)$, because $d\iota\,\partial_s=-\partial_s$ turns the outward conormal at $p$ into the outward conormal at $\bar p$, while $d\sigma\,\nu_a(p)=\nu_a(\bar p)$ by the proof of Lemma \ref{lem:even}; as $\sigma$ is an isometry, $A_a(\eta,\eta)(\bar p)=A_a(\eta,\eta)(p)$. The assertion now follows from \eqref{eq:defect}.

(iii) By Lemma \ref{lem:equivariance}, applied with $r=r(a_*)\in(0,\pi)$, the normal component of every Killing field of $\Sph^3$ whose flow preserves $B(r(a_*))$ has vanishing mode-zero part, whereas $\varphi_{a_*}$ is a nonzero mode-zero field by (i).
\end{proof}

\begin{remark}[Degeneration is not bifurcation]\label{rem:nobif}
Theorem \ref{thm:fold} asserts that the nullity jumps at critical points of $r$; it does not assert that new solutions branch off there. At a critical point with $r''\neq 0$ the radius map is locally two-to-one on one side and has no preimage on the other, so within the family the picture is a fold and the kernel element is the tangent vector of the family itself, as in the classical situation of coaxial Euclidean catenoids at maximal separation, where the Jacobi kernel appears and no new catenoid is born. Any bifurcation statement requires additional hypotheses, such as transversality of an eigenvalue crossing or a kernel transversal to the branch, which we neither assume nor verify; and branching of non-rotational solutions off a rotational family is governed by the kernel in the Fourier modes $|k|\geq 2$, about which $r'(a)$ carries no information.
\end{remark}

\section{The rotational family in spherical caps}\label{sec:caps}

\subsection*{The family}

We use the parametrization of the do Carmo--Dajczer rotational minimal surfaces of $\Sph^3$ in the free boundary normalization of de Oliveira \cite{dO24}, to which it agrees term by term \cite[eq. (1)--(6)]{dO24}. Let $\Sph^3\subset\R^4$ be the unit sphere with the round metric, let $p_0=(1,0,0,0)$, and let $\theta$ denote the rotation angle in the $x^3x^4$-plane. For $a\in(0,\tfrac12)$ set
\begin{equation}\label{eq:dO-defs}
\lambda_a(s)=\tfrac12-a\cos 2s,\qquad z_a(s)=\bigl(\tfrac12+a\cos 2s\bigr)^{1/2},\qquad c(a)=\bigl(\tfrac14-a^2\bigr)^{1/2},
\end{equation}
\begin{equation}\label{eq:dO-omega}
\omega(a,t)=\frac{c(a)}{z_a(t)\,\lambda_a(t)},\qquad \beta(a,s)=\int_0^s\omega(a,t)\,dt,
\end{equation}
\begin{equation}\label{eq:dO-imm}
x_a(s)=\lambda_a(s)^{1/2}\cos\beta(a,s),\qquad y_a(s)=\lambda_a(s)^{1/2}\sin\beta(a,s),
\end{equation}
\begin{equation}\label{eq:dO-X}
X_a(s,\theta)=\bigl(x_a(s),\,y_a(s),\,z_a(s)\cos\theta,\,z_a(s)\sin\theta\bigr).
\end{equation}
Then $x_a^2+y_a^2+z_a^2=\lambda_a+(1-\lambda_a)=1$, so $X_a$ maps into $\Sph^3$, and by \cite[Prop. 2.1]{dO24} it is a complete minimal immersion of $\R\times\Sph^1$ into $\Sph^3$, parametrized by arclength in $s$; the induced metric is $ds^2+z_a(s)^2d\theta^2$. The generating curve satisfies the minimality equations
\begin{equation}\label{eq:dO-ode}
\begin{gathered}
\ddot x_a+\frac{\dot z_a}{z_a}\dot x_a+2x_a=0,\qquad
\ddot y_a+\frac{\dot z_a}{z_a}\dot y_a+2y_a=0,\\[2pt]
\ddot z_a+\frac{\dot z_a^2-1}{z_a}+2z_a=0 ;
\end{gathered}
\end{equation}
these are \cite[eq. (9)--(11)]{dO24}; the first two express $\Delta_{\Sigma_a}X_a^i+2X_a^i=0$ for the coordinates $x_a$ and $y_a$, which depend on $s$ alone, so that $\Delta_{\Sigma_a}=\partial_s^2+(\dot z_a/z_a)\partial_s$ on them.
Since $\cos 2s$ and $\omega(a,\cdot)$ are even in $s$, the function $\beta(a,\cdot)$ is odd, so $x_a$ and $z_a$ are even and $y_a$ is odd. Consequently the reflection $\sigma:x^2\mapsto-x^2$ of $\R^4$, which restricts to an orientation-reversing isometry of $\Sph^3$ fixing $p_0$, satisfies $\sigma\circ X_a(s,\theta)=X_a(-s,\theta)$: condition \eqref{eq:reflection} holds.

The second fundamental form of these annuli is available in closed form, which we record because it makes the standing hypotheses of Section \ref{sec:dictionary} directly verifiable for this family. By rotational symmetry the coordinate directions $\partial_s,\partial_\theta$ are principal, so by minimality the principal curvatures are $\pm\mu_a$ with $\mu_a^2=A_a(\partial_s,\partial_s)^2$ and $|A_a|^2=2\mu_a^2$. The induced metric is $ds^2+z_a^2\,d\theta^2$, whose Gauss curvature is $-\ddot z_a/z_a$, so the Gauss equation in $\Sph^3$ gives $-\ddot z_a/z_a=1-\mu_a^2$, that is $\mu_a^2=1+\ddot z_a/z_a$; eliminating $\ddot z_a$ by \eqref{eq:dO-ode} yields $\mu_a^2=(1-\dot z_a^2-z_a^2)/z_a^2$. On the other hand $z_a^2=\tfrac12+a\cos 2s$ and $z_a\dot z_a=-a\sin 2s$ give
\[
1-\dot z_a^2-z_a^2=\frac{z_a^2-a^2\sin^2 2s-z_a^4}{z_a^2}
=\frac{\bigl(\tfrac12+a\cos 2s\bigr)\bigl(\tfrac12-a\cos 2s\bigr)-a^2\sin^2 2s}{z_a^2}=\frac{c(a)^2}{z_a^2} .
\]
Therefore
\begin{equation}\label{eq:Aclosed}
A_a(\partial_s,\partial_s)^2=\frac{c(a)^2}{z_a^4},\qquad |A_a|^2=\frac{2\,c(a)^2}{z_a^4}\qquad\text{on all of }\Sigma_a .
\end{equation}
In particular $|A_a|$ never vanishes, and since $\eta=\pm\partial_s$ along the boundary, $A_a(\eta,\eta)^2=c(a)^2/z_a^{4}>0$ at every boundary point; being continuous and nowhere zero on the connected surface $\Sigma_a$, the function $A_a(\partial_s,\partial_s)$ has a constant strict sign, the same on both boundary circles. Thus for this family the conclusions of Lemma \ref{lem:bdrycurv} hold by direct computation, independently of \cite{NZ1}.

Set $C_a:=\int_0^{\pi}\omega(a,t)\,dt$. The two-sided bound and the limit recorded in the next lemma are all that the argument requires about $C_a$. They are contained in \cite[Lem. 2.5]{dO24}, where they are obtained through the period function of the associated Otsuki tori and hence rest on external monotonicity results for that period; we give in Lemma \ref{lem:period} a short self-contained proof which uses nothing beyond elementary calculus, so that no property of $C_a$ beyond \eqref{eq:Cbounds} and \eqref{eq:Climit} is used below.

\begin{lemma}[The period function]\label{lem:period}
For every $a\in[0,\tfrac12)$ put $w_a(\tau):=\bigl(c(a)^2+a^2\sin^2\tau\bigr)^{1/2}$, so that $w_a(\tau)\in[c(a),\tfrac12]$. Then
\begin{equation}\label{eq:Cclosed}
C_a=\frac{c(a)}{4}\int_0^{2\pi}\frac{\bigl(1+2w_a(\tau)\bigr)^{1/2}}{w_a(\tau)^2}\,d\tau .
\end{equation}
Consequently
\begin{equation}\label{eq:Cbounds}
\pi<C_a\leq\sqrt2\,\pi\qquad\text{for every }a\in\bigl[0,\tfrac12\bigr),
\end{equation}
with equality on the right if and only if $a=0$, and
\begin{equation}\label{eq:Climit}
\lim_{a\to(1/2)^-}C_a=\pi .
\end{equation}
\end{lemma}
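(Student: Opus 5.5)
The plan is to obtain \eqref{eq:Cclosed} by two elementary manipulations of the integrand $\omega(a,t)=c(a)/(z_a\lambda_a)$. Then I will read \eqref{eq:Cbounds} and \eqref{eq:Climit} off a probabilistic interpretation of the resulting integral, using that one weight integrates exactly.

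\emph{Step 1: the closed form.} The key algebraic observation is
\[
z_a^2\lambda_a=\tfrac14-a^2\cos^2 2t=c(a)^2+a^2\sin^2 2t=w_a(2t)^2,
\]
so that $1/(z_a\lambda_a)=z_a/w_a(2t)^2$. Substituting $\tau=2t$ gives
\[
C_a=\frac{c(a)}{2\sqrt2}\int_0^{2\pi}\frac{(1+2a\cos\tau)^{1/2}}{w_a(\tau)^2}\,d\tau .
\]
To remove $\cos\tau$, I will pair $\tau$ with $\pi-\tau$; the weight $w_a$ is invariant under this map while $\cos\tau$ changes sign. With $x=2a\cos\tau$ one has $\sqrt{1+x}+\sqrt{1-x}=\bigl(2+2\sqrt{1-x^2}\bigr)^{1/2}$ and $1-x^2=4w_a(\tau)^2$. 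Hence the symmetrized numerator equals $\sqrt2\,(1+2w_a)^{1/2}$. Averaging over the pair then turns the prefactor $c/(2\sqrt2)$ into $c/4$, which is \eqref{eq:Cclosed}.

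\emph{Step 2: a probability measure.} The standard integral $\int_0^{2\pi}d\tau/(c^2+a^2\sin^2\tau)=2\pi/\bigl(c\sqrt{c^2+a^2}\bigr)$, together with $c^2+a^2=\tfrac14$, gives $\int_0^{2\pi}c\,w_a^{-2}\,d\tau=4\pi$. Thus $d\mu_a:=\frac{c(a)}{4\pi w_a(\tau)^2}\,d\tau$ is a probability measure on $[0,2\pi]$, and
\[
C_a=\pi\int_0^{2\pi}\bigl(1+2w_a\bigr)^{1/2}\,d\mu_a .
\]
For $a\in[0,\tfrac12)$ we have $0<c(a)\le w_a\le\tfrac12$, so the integrand lies in $(1,\sqrt2]$. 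This yields $\pi<C_a\le\sqrt2\,\pi$. Equality on the right holds iff $w_a\equiv\tfrac12$, i.e.\ iff $a=0$; a direct check confirms $C_0=\sqrt2\,\pi$.

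\emph{Step 3: the limit.} This is the only place needing a little care. As $a\to\tfrac12^-$ we have $c(a)\to0$, and $\mu_a$ concentrates near $\sin\tau=0$, where $w_a$ is small. Fix $\delta>0$. On $\{w_a>\delta\}$ the density is at most $c/(4\pi\delta^2)$, so $\mu_a(\{w_a>\delta\})\le c(a)/(2\delta^2)\to0$. On the complement the integrand is at most $(1+2\delta)^{1/2}$, and it is bounded by $\sqrt2$ everywhere. Therefore $\limsup C_a\le\pi(1+2\delta)^{1/2}$ for every $\delta>0$. Combined with $C_a>\pi$ this gives \eqref{eq:Climit}.

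I expect the main obstacle to be spotting the identity $z_a^2\lambda_a=w_a(2t)^2$ and the $\tau\leftrightarrow\pi-\tau$ symmetrization; after that, the exact normalization of $\mu_a$ makes both the bounds and the limit immediate.
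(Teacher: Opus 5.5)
Your proof is correct. The closed form and the two-sided bound follow the paper's route. The paper also symmetrizes under a half-period map: it uses $\tau\mapsto\tau+\pi$, which swaps $p=\tfrac12+a\cos\tau$ and $q=\tfrac12-a\cos\tau$, whereas you use $\tau\mapsto\pi-\tau$ after first writing the integrand as $z_a/w_a^2$. Your identity $z_a^2\lambda_a=w_a(2t)^2$ is the paper's $pq=w_a^2$. The paper also evaluates $\int_0^{2\pi}w_a^{-2}\,d\tau=4\pi/c$ by the same elementary integral, which you repackage as the normalization of the probability measure $\mu_a$.

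The limit \eqref{eq:Climit} is where you genuinely differ. The paper uses $(1+2w)^{1/2}\le 1+w$ to reduce to bounding $\tfrac c4\int_0^{2\pi}w_a^{-1}\,d\tau$. It controls that integral with Jordan's inequality $\sin\tau\ge 2\tau/\pi$ and an $\operatorname{arcsinh}$, obtaining the explicit estimate $\pi<C_a\le\pi+2\pi c\operatorname{arcsinh}\bigl(1/(2c)\bigr)$, i.e.\ a rate $O(c\log(1/c))$.

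Your threshold argument, based on $\mu_a(\{w_a>\delta\})\le c/(2\delta^2)$, is shorter and needs no further integral evaluation. It is, however, qualitative: optimizing over $\delta$ gives at best $C_a-\pi=O(c^{1/3})$. Since the rest of the paper (Lemma \ref{lem:return}) uses only the limit together with the strict lower bound $C_a>\pi$, either argument suffices.
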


\begin{proof}
\emph{Step 1: the closed form \eqref{eq:Cclosed}.} Substituting $\tau=2t$ in the definition of $C_a$ and writing
\[
p(\tau):=\tfrac12+a\cos\tau,\qquad q(\tau):=\tfrac12-a\cos\tau,
\]
both of which are positive because $a<\tfrac12$, we obtain $C_a=\tfrac{c}{2}\int_0^{2\pi}p^{-1/2}q^{-1}\,d\tau$, where $c:=c(a)$. The translation $\tau\mapsto\tau+\pi$ interchanges $p$ and $q$ and leaves the integral over a full period unchanged, so that also $C_a=\tfrac{c}{2}\int_0^{2\pi}q^{-1/2}p^{-1}\,d\tau$. Averaging the two expressions,
\[
C_a=\frac{c}{4}\int_0^{2\pi}\Bigl(\frac{1}{p^{1/2}q}+\frac{1}{q^{1/2}p}\Bigr)d\tau
=\frac{c}{4}\int_0^{2\pi}\frac{p^{1/2}+q^{1/2}}{pq}\,d\tau .
\]
Now $p+q=1$ and
\[
pq=\tfrac14-a^2\cos^2\tau=\bigl(\tfrac14-a^2\bigr)+a^2\sin^2\tau=c^2+a^2\sin^2\tau=w_a^2 ,
\]
whence $\bigl(p^{1/2}+q^{1/2}\bigr)^2=p+q+2(pq)^{1/2}=1+2w_a$ and, both summands being positive, $p^{1/2}+q^{1/2}=(1+2w_a)^{1/2}$. This is \eqref{eq:Cclosed}. Finally $c^2\leq w_a^2\leq c^2+a^2=\tfrac14$, which is the stated range of $w_a$.

\emph{Step 2: an elementary integral.} For $A>0$ and $A+B>0$, the periodicity and the symmetry of $\sin^2$ give $\int_0^{2\pi}(A+B\sin^2\tau)^{-1}d\tau=4\int_0^{\pi/2}(A+B\sin^2\tau)^{-1}d\tau$, and the substitution $u=\tan\tau$ turns the latter into
\[
4\int_0^{\infty}\frac{du}{A(1+u^2)+Bu^2}=4\int_0^{\infty}\frac{du}{A+(A+B)u^2}=\frac{2\pi}{\bigl(A(A+B)\bigr)^{1/2}} .
\]
Taking $A=c^2$ and $B=a^2$, so that $A+B=\tfrac14$, we get
\begin{equation}\label{eq:w2int}
\int_0^{2\pi}\frac{d\tau}{w_a(\tau)^2}=\frac{2\pi}{\bigl(c^2/4\bigr)^{1/2}}=\frac{4\pi}{c} .
\end{equation}

\emph{Step 3: the bounds \eqref{eq:Cbounds}.} Since $0<w_a\leq\tfrac12$ we have $1<(1+2w_a)^{1/2}\leq\sqrt2$ pointwise, with equality on the right at $\tau$ if and only if $w_a(\tau)=\tfrac12$, that is $a\cos\tau=0$. Inserting these two inequalities into \eqref{eq:Cclosed} and using \eqref{eq:w2int},
\[
\pi=\frac{c}{4}\cdot\frac{4\pi}{c}<C_a\leq\sqrt2\cdot\frac{c}{4}\cdot\frac{4\pi}{c}=\sqrt2\,\pi .
\]
Equality on the right forces $a\cos\tau=0$ for almost every $\tau$, hence $a=0$; conversely $w_0\equiv\tfrac12$, so $C_0=\sqrt2\,\pi$.

\emph{Step 4: the limit \eqref{eq:Climit}.} From $(1+2w)^{1/2}\leq 1+w$ for $w\geq 0$, together with \eqref{eq:Cclosed} and \eqref{eq:w2int},
\[
\pi<C_a\leq\pi+\frac{c}{4}\int_0^{2\pi}\frac{d\tau}{w_a(\tau)} .
\]
On $[0,\tfrac\pi2]$ one has $\sin\tau\geq 2\tau/\pi$, so, using again the symmetries of $\sin^2$,
\[
\int_0^{2\pi}\frac{d\tau}{w_a}=4\int_0^{\pi/2}\frac{d\tau}{\bigl(c^2+a^2\sin^2\tau\bigr)^{1/2}}
\leq 4\int_0^{\pi/2}\frac{d\tau}{\bigl(c^2+4\pi^{-2}a^2\tau^2\bigr)^{1/2}}
=\frac{2\pi}{a}\operatorname{arcsinh}\frac{a}{c} .
\]
Hence, for $a\in(\tfrac14,\tfrac12)$, using $a>\tfrac14$ and $a<\tfrac12$,
\[
\pi<C_a\leq\pi+\frac{\pi c}{2a}\operatorname{arcsinh}\frac{a}{c}\leq\pi+2\pi\,c\operatorname{arcsinh}\frac{1}{2c} .
\]
As $a\to(\tfrac12)^-$ we have $c=c(a)\to 0^+$, and $c\operatorname{arcsinh}\bigl(1/(2c)\bigr)\to 0$; therefore $C_a\to\pi$.
\end{proof}

Since $\omega(a,\pi-t)=\omega(a,t)$, the function $\omega(a,\cdot)$ is symmetric about $t=\tfrac\pi2$, so $\beta(a,\tfrac\pi2)=\tfrac12C_a$, and \eqref{eq:Cbounds} gives
\begin{equation}\label{eq:beta-half}
\beta\bigl(a,\tfrac\pi2\bigr)=\tfrac12\,C_a\in\Bigl(\frac\pi2,\frac{\pi}{\sqrt2}\Bigr]\subset\Bigl(\frac\pi2,\pi\Bigr)\qquad\text{for every }a\in\bigl[0,\tfrac12\bigr).
\end{equation}

\subsection*{The free boundary function}

For $X\in\Sph^3\setminus\{\pm p_0\}$ one has $\rho(X)=\arccos\langle X,p_0\rangle$ and
\[
\nabla\rho=-\frac{p_0-\langle p_0,X\rangle X}{\bigl(1-\langle p_0,X\rangle^2\bigr)^{1/2}} .
\]
Let $\nu$ be a unit normal of the immersion \eqref{eq:dO-X}. Since $\langle\nu,X\rangle=0$, the free boundary condition $\langle\nu,\nabla\rho\rangle=0$ at a parameter $s$ is equivalent to $\langle\nu,p_0\rangle=0$, that is, to the vanishing of the first component of $\nu$. A direct computation of the $4$-dimensional cross product of $X_a$, $\partial_sX_a$ and $\partial_\theta X_a$ shows that this first component equals $z_a\,G_a$ up to normalization, where
\begin{equation}\label{eq:Ga}
G_a(s):=y_a(s)\dot z_a(s)-z_a(s)\dot y_a(s).
\end{equation}
As $z_a>0$, the free boundary parameters are exactly the zeros of $G_a$.

The function $G_a$ is, up to a \emph{negative} factor, the function $f_a$ introduced in \cite[Lem. 3.1]{dO24}: one has $G_a=-f_a/\bigl(z_a\lambda_a^{1/2}\bigr)$, consistently with $G_a(0)=-z_a(0)<0$ and $f_a(0)>0$. Thus the free boundary condition takes there the form $f_a(s_1)=0$, which is the identity \eqref{eq:fbc-clean} established in the proof below, and the two auxiliary facts $s_1(a)<\tfrac\pi2$ and $x_a(s_1)<0$ recover, respectively, a step in the proof of \cite[Prop. 3.3]{dO24} and a part of \cite[Prop. 3.4]{dO24}; the embeddedness and containment statements themselves are recovered in Proposition \ref{prop:embcont} below. What the following lemma adds, and what the rest of the paper requires, is that the zero is \emph{simple} and \emph{unique} in $(0,\tfrac\pi2]$: this is not recorded in \cite{dO24}, it is what feeds the analytic implicit function theorem in Lemma \ref{lem:analytic} and makes $s_1$ a globally well-defined analytic branch, and without it Theorem \ref{thm:main}(iv) is unavailable.

\begin{lemma}\label{lem:s1}
Let $a\in[0,\tfrac12)$. Then $G_a$ has exactly one zero $s_1(a)$ in $(0,\tfrac\pi2]$, and $s_1(a)<\tfrac\pi2$; the zero is simple, with $G_a'(s_1(a))>0$. Moreover $\beta(a,s_1(a))\in\bigl(\tfrac\pi2,\tfrac12C_a\bigr)$ for $a\in(0,\tfrac12)$, and $s_1(0)=\pi/(2\sqrt2)$. Finally, $\dot x_a(s_1(a))<0$ for every $a\in(0,\tfrac12)$.
\end{lemma}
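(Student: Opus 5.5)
The plan is to reduce $G_a$ to an explicit expression in $s$ and $\beta=\beta(a,s)$, and then run a sign-and-transversality argument on $(0,\tfrac\pi2]$. Write $\lambda=\lambda_a$, $z=z_a$, $c=c(a)$. Insert $y=\lambda^{1/2}\sin\beta$, $\dot\beta=\omega=c/(z\lambda)$, $\dot\lambda=2a\sin 2s$ and $\dot z=-a\sin 2s/z$ into \eqref{eq:Ga}, and use $\lambda+z^2=1$. I expect the coefficient of $\sin\beta$ to collapse to $-a\sin 2s/(z\lambda^{1/2})$, which gives
\[
G_a=-\frac{F_a}{z\,\lambda^{1/2}},\qquad F_a(s):=c\,z\cos\beta+a\sin 2s\,\sin\beta .
\]
So the zeros of $G_a$ are those of $F_a$, and $G_a'=-F_a'/(z\lambda^{1/2})$ at any such zero. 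Up to normalisation, $F_a$ should be de Oliveira's $f_a$.

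Next comes existence and localisation. On $(0,\tfrac\pi2)$ we have $\sin 2s>0$, and $\beta$ is strictly increasing from $0$ to $\beta(a,\tfrac\pi2)=\tfrac12C_a\in(\tfrac\pi2,\pi)$ by \eqref{eq:beta-half}. Wherever $\beta\le\tfrac\pi2$, both terms of $F_a$ are nonnegative and the first is positive unless $\cos\beta=0$, in which case the second is positive; hence $F_a>0$ there. At $s=\tfrac\pi2$ we have $F_a=c\,z\cos(\tfrac12C_a)<0$. The intermediate value theorem then produces a zero, and every zero in $(0,\tfrac\pi2]$ lies strictly inside $(0,\tfrac\pi2)$ with $\beta\in(\tfrac\pi2,\tfrac12 C_a)$. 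In particular $\sin\beta>0$ and $\cos\beta<0$ at every zero, which already gives the claim about $\beta(a,s_1(a))$.

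The main step is transversality: show $F_a'<0$ at every zero in $(0,\tfrac\pi2)$. Uniqueness and simplicity then follow, because a $C^1$ function that is positive near $0$ and crosses zero only downward can vanish only once. I would differentiate $F_a$ and eliminate $\cos\beta$ with the zero relation $c\,z\cos\beta=-a\sin 2s\sin\beta$. Substituting $\dot z$, $\omega$ and $\lambda=1-z^2$, I expect
\[
\lambda\,F_a'=\sin\beta\Bigl(-a^2\sin^2 2s-\tfrac14+a\cos 2s-2a^2\cos^2 2s+a^2\Bigr)=-\lambda^2\sin\beta ,
\]
using $\tfrac14-a\cos 2s+a^2\cos^2 2s=\lambda^2$. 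Hence $F_a'=-\lambda\sin\beta<0$ and $G_a'(s_1)>0$. This algebraic identity is the step most likely to need care; if it does not close as cleanly as I expect, the fallback is to show only that the bracket is negative, term by term.

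The remaining items are direct. For $a=0$ we have $c=\tfrac12$, $z\equiv 2^{-1/2}$, $\omega\equiv\sqrt2$, $\beta=\sqrt2 s$, so $F_0=c\,z\cos(\sqrt2 s)$ and $s_1(0)=\pi/(2\sqrt2)$. The endpoint $a=0$ is in fact covered by the general argument as well. Finally,
\[
\dot x_a=\frac{a\sin 2s}{\lambda^{1/2}}\cos\beta-\lambda^{1/2}\omega\sin\beta .
\]
At $s_1$ we have $\cos\beta<0$, $\sin\beta>0$ and $\sin 2s_1>0$, so both terms are negative for $a\in(0,\tfrac12)$ and $\dot x_a(s_1(a))<0$.
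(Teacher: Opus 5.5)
Your proposal is correct and follows the paper's proof in structure. Both arguments use the intermediate value theorem between $s=0$ and $s=\tfrac\pi2$ via $\tfrac12C_a\in(\tfrac\pi2,\pi)$, and both get transversality at every zero from $G_a'(s_*)=y_a(s_*)/z_a(s_*)>0$; your $F_a'=-\lambda_a\sin\beta$ is exactly this identity after dividing by $-z_a\lambda_a^{1/2}$. Both then locate $\beta(a,s_1)$ by a sign analysis of $c\,z_a\cos\beta=-a\sin(2s_1)\sin\beta$, and conclude with the two negative summands of $\dot x_a$.

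The one genuine difference is how the transversality identity is derived. You differentiate the explicit expression and collapse the bracket algebraically to $-\lambda_a^2$. The paper instead substitutes the profile equations \eqref{eq:dO-ode} into $G_a'=y_a\ddot z_a-z_a\ddot y_a$ and reads off $y_a/z_a$ in one line.

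A small point about $a=0$: your claim that $F_a>0$ wherever $\beta\le\tfrac\pi2$ fails there, exactly at $\beta=\tfrac\pi2$. This is harmless, since the lemma asserts the $\beta$-localization only for $a>0$ and your explicit computation settles the case $a=0$.
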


\begin{proof}
From $z_a^2=\tfrac12+a\cos2s$ we get $z_a\dot z_a=-a\sin 2s$, and from \eqref{eq:dO-defs}--\eqref{eq:dO-imm},
\begin{equation}\label{eq:ydot}
\dot y_a=\frac{a\sin 2s}{\lambda_a^{1/2}}\sin\beta+\lambda_a^{1/2}\,\omega\cos\beta,\qquad \lambda_a\,\omega=\frac{c(a)}{z_a}.
\end{equation}
At $s=0$ we have $\beta=0$, hence $y_a(0)=0$ and, by \eqref{eq:ydot} and $\lambda_a(0)=\tfrac12-a$,
\[
\dot y_a(0)=\lambda_a(0)^{1/2}\,\omega(a,0)=\frac{c(a)}{\lambda_a(0)^{1/2}z_a(0)}=\frac{\bigl[(\tfrac12-a)(\tfrac12+a)\bigr]^{1/2}}{(\tfrac12-a)^{1/2}(\tfrac12+a)^{1/2}}=1 .
\]
Therefore $G_a(0)=-z_a(0)<0$. At $s=\tfrac\pi2$ we have $\sin 2s=0$, so $\dot z_a(\tfrac\pi2)=0$ and, by \eqref{eq:ydot} and \eqref{eq:beta-half},
\[
G_a\bigl(\tfrac\pi2\bigr)=-z_a\bigl(\tfrac\pi2\bigr)\dot y_a\bigl(\tfrac\pi2\bigr)=-z_a\bigl(\tfrac\pi2\bigr)\lambda_a\bigl(\tfrac\pi2\bigr)^{1/2}\omega\bigl(a,\tfrac\pi2\bigr)\cos\bigl(\tfrac12C_a\bigr)>0
\]
for every $a\in[0,\tfrac12)$, since $\tfrac12C_a\in(\tfrac\pi2,\pi)$ by \eqref{eq:beta-half} and $\omega>0$. Hence $G_a$ has at least one zero in $(0,\tfrac\pi2)$.

Next, $G_a'=y_a\ddot z_a-z_a\ddot y_a$, and substituting \eqref{eq:dO-ode},
\[
G_a'=-\frac{y_a(\dot z_a^2-1)}{z_a}+\dot y_a\dot z_a .
\]
At a zero $s_*$ of $G_a$ one has $\dot y_a(s_*)=y_a(s_*)\dot z_a(s_*)/z_a(s_*)$, and substituting this the expression collapses to
\begin{equation}\label{eq:Gprime}
G_a'(s_*)=\frac{y_a(s_*)}{z_a(s_*)} .
\end{equation}
For $s\in(0,\tfrac\pi2]$ we have $\beta(a,s)\in(0,\tfrac12C_a]\subset(0,\pi)$ by \eqref{eq:beta-half} and $\omega>0$, hence $\sin\beta(a,s)>0$ and $y_a(s)>0$. Therefore $G_a'(s_*)>0$ at every zero $s_*\in(0,\tfrac\pi2]$: all zeros are simple upward crossings. Since $G_a(0)<0$, the function $G_a$ can cross zero only once in $(0,\tfrac\pi2]$, and the crossing occurs in $(0,\tfrac\pi2)$; we call it $s_1(a)$.

For the location of $\beta(a,s_1)$ we use the identity $G_a(s_1)=0$, that is, $\dot y_a=y_a\dot z_a/z_a$ at $s=s_1$. Using $\dot z_a=-a\sin2s/z_a$, $y_a=\lambda_a^{1/2}\sin\beta$ and \eqref{eq:ydot}, and multiplying by $\lambda_a^{1/2}$, we obtain, all functions being evaluated at $s=s_1$,
\[
a\sin (2s_1)\,\sin\beta+\lambda_a\,\omega\,\cos\beta=-\,\frac{a\,\lambda_a\,\sin\beta\,\sin (2s_1)}{z_a^{2}} .
\]
Moving the right-hand side to the left and using $z_a^2+\lambda_a=1$, the two terms carrying $\sin\beta$ combine into $a\sin(2s_1)\sin\beta\,z_a^{-2}$; using $\lambda_a\omega=c(a)/z_a$ we obtain
\begin{equation}\label{eq:fbc-clean}
a\,\sin\bigl(2s_1(a)\bigr)\,\sin\beta\bigl(a,s_1(a)\bigr)=-\,c(a)\,z_a\bigl(s_1(a)\bigr)\cos\beta\bigl(a,s_1(a)\bigr).
\end{equation}
For $a\in(0,\tfrac12)$ the left-hand side is strictly positive, because $s_1\in(0,\tfrac\pi2)$ and $\sin\beta(a,s_1)>0$; since $c(a)>0$ and $z_a>0$, we conclude $\cos\beta(a,s_1)<0$, that is $\beta(a,s_1)\in(\tfrac\pi2,\pi)$. As $\beta(a,\cdot)$ is strictly increasing and $s_1<\tfrac\pi2$, we also have $\beta(a,s_1)<\beta(a,\tfrac\pi2)=\tfrac12C_a$, which gives the stated interval.

The same two signs determine the side from which the annulus meets the geodesic sphere. Differentiating \eqref{eq:dO-imm} and using $\partial_s\beta=\omega$ and $\dot\lambda_a=2a\sin 2s$,
\[
\dot x_a=\frac{a\sin 2s}{\lambda_a^{1/2}}\cos\beta-\lambda_a^{1/2}\,\omega\,\sin\beta .
\]
At $s=s_1(a)$, with $a\in(0,\tfrac12)$, both summands are strictly negative: the first because $\sin(2s_1)>0$ and $\cos\beta(a,s_1)<0$, the second because $\lambda_a^{1/2}\omega>0$ and $\sin\beta(a,s_1)>0$. Hence $\dot x_a(s_1(a))<0$.

Finally, for $a=0$ we have $z_0\equiv 1/\sqrt2$, $\lambda_0\equiv 1/2$, $\omega(0,\cdot)\equiv\sqrt2$ and $\beta(0,s)=\sqrt2\,s$; hence $\dot z_0\equiv 0$, $y_0=\tfrac1{\sqrt2}\sin(\sqrt2 s)$, $\dot y_0=\cos(\sqrt2 s)$ and $G_0=-\tfrac1{\sqrt2}\cos(\sqrt2 s)$, whose first positive zero is $s_1(0)=\pi/(2\sqrt2)<\tfrac\pi2$.
\end{proof}

By Lemma \ref{lem:s1}, for $a\in(0,\tfrac12)$ the map $X_a$ restricted to $[-s_1(a),s_1(a)]\times\Sph^1$ is a free boundary minimal annulus whose two boundary circles lie on the geodesic sphere $S(R(a))$, where
\begin{equation}\label{eq:Rdef}
R(a)=\arccos x_a\bigl(s_1(a)\bigr)\in\Bigl(\frac\pi2,\pi\Bigr),
\end{equation}
the inclusion $R(a)>\tfrac\pi2$ being equivalent to $x_a(s_1(a))<0$, which follows from $\cos\beta(a,s_1)<0$ proved above; both circles lie on the same sphere because $x_a$ is even. We denote this annulus by $\Sigma_a$. That $\Sigma_a$ is moreover embedded and contained in $B(R(a))$ is due to de Oliveira \textup{\cite[Prop. 3.3 and Prop. 3.4]{dO24}}; we record short proofs, resting only on Lemma \ref{lem:s1}, so that this section is self-contained.

\begin{proposition}[Embeddedness and containment]\label{prop:embcont}
Let $a\in(0,\tfrac12)$. Then $X_a$ is injective on $[-s_1(a),s_1(a)]\times\Sph^1$, so that $\Sigma_a$ is an embedded annulus; and $x_a$ is strictly decreasing on $[0,s_1(a)]$, so that $\rho$ attains its maximum over $\Sigma_a$ exactly along $\partial\Sigma_a$. In particular $\Sigma_a\subset B(R(a))$.
\end{proposition}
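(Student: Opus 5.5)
Monotonicity of $x_a$ on $[0,s_1(a)]$ comes first, and embeddedness and containment are read off from it. From the formula for $\dot x_a$ obtained in the proof of Lemma \ref{lem:s1},
\[
\dot x_a=\frac{a\sin 2s}{\lambda_a^{1/2}}\cos\beta-\lambda_a^{1/2}\omega\sin\beta .
\]
For $s\in(0,s_1]$ we have $\sin 2s>0$, and $\sin\beta(a,s)>0$ because $\beta(a,s)\in(0,\pi)$. Wherever $\cos\beta\le 0$, both summands are negative. The difficulty is the initial stretch where $\beta<\tfrac\pi2$ and the first summand is positive.

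There I would rewrite the sign of $\dot x_a$ via the comparison function used for $G_a$. Consider $F_a:=x_a\dot z_a-z_a\dot x_a$, the analogue of \eqref{eq:Ga} for $x$. By the same computation as \eqref{eq:Gprime}, its derivative is $F_a'=-x_a(\dot z_a^2-1)/z_a+\dot x_a\dot z_a$. Alternatively, and more directly, I would show that $\dot x_a$ has no zero in $(0,s_1]$. At a zero $\ddot x_a=-2x_a$ by \eqref{eq:dO-ode}, so while $x_a>0$ a zero of $\dot x_a$ is a strict local maximum. Since $\dot x_a(0)=0$ and $\ddot x_a(0)=-2x_a(0)<0$, $\dot x_a$ starts negative. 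A first return to zero at some $s_*\in(0,s_1]$ with $x_a(s_*)>0$ is then impossible: approaching from negative values forces $\ddot x_a(s_*)\ge 0$, contradicting $\ddot x_a(s_*)=-2x_a(s_*)<0$. If instead $x_a(s_*)\le 0$, then $\cos\beta(a,s_*)\le 0$, and the displayed formula gives $\dot x_a(s_*)<0$, again a contradiction. Hence $\dot x_a<0$ on $(0,s_1]$, and $x_a$ is strictly decreasing on $[0,s_1]$. Since $x_a$ is even, $\rho=\arccos x_a$ is strictly increasing in $|s|$, its maximum $R(a)$ is attained exactly at $s=\pm s_1$, and therefore $\Sigma_a\subset B(R(a))$.

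For injectivity, suppose $X_a(s,\theta)=X_a(s',\theta')$. The first coordinate gives $x_a(s)=x_a(s')$, hence $|s|=|s'|$ by the monotonicity just proved. The second coordinate $y_a$ is odd, and $y_a>0$ on $(0,s_1]$ (shown in Lemma \ref{lem:s1}), so $y_a(s)=y_a(s')$ forces $s=s'$. Finally $z_a>0$, so equality of the last two coordinates gives $\theta=\theta'$ modulo $2\pi$. Thus $X_a$ is injective, and an injective immersion of a compact annulus is an embedding.

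The main obstacle I expect is the sign of $\dot x_a$ before $\beta$ reaches $\tfrac\pi2$. The maximum-principle argument via $\ddot x_a=-2x_a$ on $\{x_a>0\}$ handles it. This uses that $\dot z_a/z_a$ multiplies $\dot x_a$ and so vanishes at the zero, and that the case $x_a\le0$ is equivalent to $\cos\beta\le0$ because $\lambda_a>0$.
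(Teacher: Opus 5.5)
Your proof is correct. For monotonicity and containment it follows the paper's argument: take the first zero $s_*$ of $\dot x_a$ in $(0,s_1]$, exclude $x_a(s_*)>0$ through $\ddot x_a(s_*)=-2x_a(s_*)<0$, and exclude the remaining sign from the explicit formula for $\dot x_a$.

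There is one small difference in that part. You absorb the case $x_a(s_*)=0$ into the case $\cos\beta\le 0$, where the second summand is strictly negative because $\sin\beta>0$. The paper instead excludes $x_a(s_*)=0$ separately, by uniqueness for the linear equation satisfied by $x_a$. Your version is slightly more economical. The detour through $F_a$ is never used and can be dropped.

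The real difference is in injectivity. The paper proves it independently of monotonicity, via the polar form $(x_a,y_a)=\lambda_a^{1/2}(\cos\beta,\sin\beta)$. Equal norms give $\lambda_a(s)=\lambda_a(s')$, hence $\beta(a,s)\equiv\beta(a,s')$ modulo $2\pi$. The odd, strictly increasing function $\beta(a,\cdot)$ maps $[-s_1,s_1]$ onto an interval of length $2\beta(a,s_1)<2\pi$, which forces $s=s'$.

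You instead deduce injectivity from the monotonicity just proved. Since $x_a$ is even and strictly decreasing on $[0,s_1]$, you get $|s|=|s'|$. Since $y_a$ is odd and positive on $(0,s_1]$, you then get $s=s'$.

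Both routes use the same input, namely $\beta(a,s)\in(0,\pi)$ for $s\in(0,s_1]$; the paper's bound $\beta(a,s_1)<\pi/\sqrt2$ is more than its own argument needs. The paper's route keeps embeddedness logically independent of the behaviour of $x_a$. Yours makes embeddedness a by-product of the monotonicity argument that also gives containment, so the whole proposition rests on a single sign analysis of $\dot x_a$.
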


\begin{proof}
\emph{Injectivity.} Suppose $X_a(s,\theta)=X_a(s',\theta')$ with $s,s'\in[-s_1,s_1]$. Comparing the norms of the first two coordinates gives $\lambda_a(s)=\lambda_a(s')$; dividing by $\lambda_a^{1/2}>0$ then gives $\bigl(\cos\beta(a,s),\sin\beta(a,s)\bigr)=\bigl(\cos\beta(a,s'),\sin\beta(a,s')\bigr)$, that is $\beta(a,s)-\beta(a,s')\in 2\pi\mathbb{Z}$. Now $\beta(a,\cdot)$ is odd and strictly increasing, since $\omega>0$, and $\beta(a,s_1)<\tfrac12 C_a\leq\pi/\sqrt2$ by Lemma \ref{lem:s1} and \eqref{eq:Cbounds}; hence $\beta(a,\cdot)$ maps $[-s_1,s_1]$ injectively onto an interval of length $2\beta(a,s_1)<\sqrt2\,\pi<2\pi$, so that $\beta(a,s)=\beta(a,s')$ and therefore $s=s'$. The last two coordinates then read $z_a(s)(\cos\theta,\sin\theta)=z_a(s)(\cos\theta',\sin\theta')$ with $z_a>0$, whence $\theta=\theta'$. An injective immersion of a compact manifold is an embedding.

\emph{Monotonicity of $x_a$.} Write $u:=x_a$. By \eqref{eq:dO-ode}, $u$ solves the linear equation $\ddot u+(\dot z_a/z_a)\dot u+2u=0$ on $\R$. If $u(s_c)=\dot u(s_c)=0$ for some $s_c$, then $u\equiv 0$ by uniqueness for that equation, contradicting $u(0)=\lambda_a(0)^{1/2}=(\tfrac12-a)^{1/2}>0$; hence at every critical point $s_c$ of $u$ one has $u(s_c)\neq 0$ and $\ddot u(s_c)=-2u(s_c)$. Moreover $\dot u(0)=0$, because $\sin 0=0$ and $\beta(a,0)=0$, so $\ddot u(0)=-2u(0)<0$ and $\dot u<0$ on some interval $(0,\varepsilon)$.

Suppose $\dot u$ had a zero in $(0,s_1]$ and let $s_c$ be the smallest one, so that $\dot u<0$ on $(0,s_c)$. If $u(s_c)>0$, then $\ddot u(s_c)=-2u(s_c)<0$, so $\dot u>0$ immediately to the left of $s_c$, contradicting $\dot u<0$ there. If $u(s_c)<0$, then $\cos\beta(a,s_c)<0$; since $s_c\leq s_1<\tfrac\pi2$ we have $\sin(2s_c)>0$, and $\beta(a,s_c)\in(0,\pi)$ gives $\sin\beta(a,s_c)>0$, so both summands of
\[
\dot x_a=\frac{a\sin 2s}{\lambda_a^{1/2}}\cos\beta-\lambda_a^{1/2}\,\omega\,\sin\beta
\]
are strictly negative at $s_c$, contradicting $\dot u(s_c)=0$. Since $u(s_c)=0$ is excluded, no such zero exists and $\dot x_a<0$ on $(0,s_1]$.

\emph{Containment.} Along the profile $\cos\rho=\langle X_a,p_0\rangle=x_a$, so $\rho$ is strictly increasing on $[0,s_1]$; as $x_a$ is even and $\rho$ does not depend on $\theta$, the maximum of $\rho$ over $\Sigma_a$ equals $\rho(s_1)=R(a)$ and is attained exactly on $\partial\Sigma_a$.
\end{proof}

\begin{remark}[On the containment]\label{rem:constrained}
The containment is not automatic here. For $r\leq\tfrac\pi2$, and for a surface known in advance to lie in the closed
hemisphere $\{\langle X,p_0\rangle\geq 0\}$, one may apply the maximum principle to the first coordinate $\langle X,p_0\rangle$, which satisfies $\Delta_{\Sigma}\langle X,p_0\rangle=-2\langle X,p_0\rangle$, and conclude that
a free boundary minimal surface with boundary on $S(r)$ lies in $B(r)$; cf.\ \cite[Rem. 2.3]{NZ1}. For $r>\tfrac\pi2$ the boundary value of that coordinate is negative and the geodesic sphere $S(r)$ is concave as seen from $p_0$, so the argument does not apply; the proof above replaces it by the sign information of Lemma \ref{lem:s1} propagated along the profile through \eqref{eq:dO-ode}. Proposition \ref{prop:embcont} is used, and only, where a geometric statement about the annuli as subsets of $\Sph^3$ is made: the containment in the assertion of Theorem \ref{thm:main}(ii) that the ball $B(\rho)$ \emph{contains} two annuli, and the embeddedness in the same item and in Corollary \ref{cor:Q66}. The identity of Theorem \ref{thm:defect}, the dictionary of Theorem \ref{thm:dictionary} and the nullity statements of Theorem \ref{thm:main}(iii)--(iv) use only that $\partial\Sigma_a$ meets $S(R(a))$ orthogonally \emph{and from inside}, in the sense that $\eta=\partial_\rho$ on both boundary circles; this is genuinely weaker, and is established in Remark \ref{rem:side}.
\end{remark}

\begin{remark}[The contact is from inside]\label{rem:side}
The free boundary convention of \S\ref{sec:prelim} requires $\eta=\partial_\rho$, not merely $\nu_a\perp\partial_\rho$, and the two are not equivalent: on a boundary circle met from outside the ball one has $\eta=-\partial_\rho$, and the computation of Theorem \ref{thm:defect} returns $\partial_\eta\varphi_a+\ct(r(a))\varphi_a=-r'(a)A_a(\eta,\eta)$, which is not the Robin operator $\mathcal{B}_{r(a)}$ of \eqref{eq:secondvar}. For the present family the correct side is forced by Lemma \ref{lem:s1}. Indeed $\cos\rho=x_a$ along the profile, so $\dot x_a=-\sin\rho\,\dot\rho$; since $\sin R(a)>0$ and $\dot x_a(s_1(a))<0$, we get $\dot\rho(s_1(a))>0$. At a free boundary parameter $\nu_a\perp\partial_\rho$, so $\nabla\rho$ is tangent to $\Sigma_a$ there; as $\partial_\theta\rho\equiv 0$ by rotational symmetry, $\nabla^{\Sigma_a}\rho=\dot\rho\,\partial_s$ has unit length, whence $\dot\rho(s_1(a))=1$ and $\eta=\partial_s=\partial_\rho$ at $s=s_1(a)$. Since $\rho$ is even in $s$ and the outward conormal at $s=-s_1(a)$ is $-\partial_s$, the same holds on the other circle. Thus $\{\Psi_a\}$ is a variable-radius family in the sense of Definition \ref{def:family}.
\end{remark}

\subsection*{Analyticity and the limits of the radius}

\begin{lemma}[Analyticity]\label{lem:analytic}
The maps $a\mapsto s_1(a)$ and $a\mapsto R(a)$ extend real-analytically to a neighbourhood of $[0,\tfrac12)$ inside $(-\tfrac12,\tfrac12)$; in particular they are real-analytic on $(0,\tfrac12)$ and continuous at $a=0$ with $R(0)=\tfrac\pi2$, so $R(a)\to\tfrac\pi2$ as $a\to 0^+$.
\end{lemma}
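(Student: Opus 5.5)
The plan is to apply the analytic implicit function theorem to the equation $G(a,s):=G_a(s)=0$ at the simple zero $s_1(a)$ supplied by Lemma \ref{lem:s1}, and then to read off $R$ from \eqref{eq:Rdef}.

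\emph{Step 1: analyticity of the data.} The functions $\lambda_a(s)=\tfrac12-a\cos2s$, $z_a(s)^2=\tfrac12+a\cos2s$ and $c(a)^2=\tfrac14-a^2$ are polynomial in $(a,\cos2s)$. On $(-\tfrac12,\tfrac12)\times\R$ they are bounded below by $\tfrac12-|a|>0$, so their square roots are real-analytic there. Hence $\omega(a,t)$ is real-analytic in $(a,t)$ on that set. Consequently $\beta(a,s)=\int_0^s\omega(a,t)\,dt$ is real-analytic as well: it is the integral of an analytic integrand over a compact range, with locally uniform holomorphic extension in $a$. Therefore $x_a$, $y_a$, $z_a$ and $G(a,s)=y_a\dot z_a-z_a\dot y_a$ are real-analytic on $(-\tfrac12,\tfrac12)\times\R$. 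Note that the formulas make sense for negative $a$ too; this is what permits an extension across $a=0$.

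\emph{Step 2: implicit function theorem.} By Lemma \ref{lem:s1}, for every $a_0\in[0,\tfrac12)$ we have $G(a_0,s_1(a_0))=0$ and $\partial_sG(a_0,s_1(a_0))>0$. The analytic implicit function theorem then gives an analytic function $\tilde s$ on a neighbourhood of $a_0$ with $G(a,\tilde s(a))=0$ and $\tilde s(a_0)=s_1(a_0)$.

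To identify $\tilde s$ with $s_1$ for $a\ge0$ near $a_0$, use the uniqueness part of Lemma \ref{lem:s1}: $G_a$ has exactly one zero in $(0,\tfrac\pi2]$. Since $s_1(a_0)\in(0,\tfrac\pi2)$ is interior, continuity keeps $\tilde s(a)$ in $(0,\tfrac\pi2)$, so $\tilde s=s_1$ there. These local branches glue along $[0,\tfrac12)$ by uniqueness of analytic continuation (they agree on overlaps with $a\ge0$), which yields an analytic extension of $s_1$ to an open neighbourhood of $[0,\tfrac12)$ in $(-\tfrac12,\tfrac12)$.

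\emph{Step 3: the radius.} Set $R(a)=\arccos x_a(s_1(a))$. Here $x_a(s)$ is analytic in $(a,s)$, so $x_a(s_1(a))$ is analytic. The function $\arccos$ is analytic on $(-1,1)$, so it remains to check $|x_a(s_1(a))|<1$. This holds because $x_a^2\le\lambda_a\le\tfrac12+|a|<1$.

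At $a=0$, Lemma \ref{lem:s1} gives $s_1(0)=\pi/(2\sqrt2)$ and $\beta(0,s)=\sqrt2\,s$. Hence $x_0(s_1(0))=\tfrac1{\sqrt2}\cos(\pi/2)=0$, so $R(0)=\tfrac\pi2$, and continuity gives $R(a)\to\tfrac\pi2$ as $a\to0^+$.

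\emph{Main obstacle.} The delicate point is the global gluing in Step 2. One must make sure that the IFT branch really is the distinguished zero $s_1$ rather than some other zero of $G_a$, and for this the uniqueness and simplicity of the zero in Lemma \ref{lem:s1} are exactly what is needed. One must also justify analyticity of the parameter integral $\beta$. I would handle the latter by complexifying $a$ in a small disc around each real $a_0$ with $|a_0|<\tfrac12$. Uniformly for $t\in[0,s]$, the quantities $\tfrac12\pm a\cos2t$ then stay in a half-plane where the principal square root is holomorphic. Holomorphy of $\beta$ in $a$ then follows by Morera's theorem, and analyticity in $s$ follows from $\partial_s\beta=\omega$.
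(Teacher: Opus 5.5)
Your proposal is correct and follows the paper's proof essentially step for step. It establishes joint analyticity of $\omega$, $\beta$ and $G_a$ on $(-\tfrac12,\tfrac12)\times\R$, applies the analytic implicit function theorem at the simple zero from Lemma \ref{lem:s1}, identifies and patches the local branches via uniqueness of the zero in $(0,\tfrac\pi2]$, and then finishes with $\arccos$ on $(-1,1)$ and the explicit value $x_0(s_1(0))=0$.

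The one point to tighten is Step 1. Holomorphy in $a$ together with analyticity in $s$ gives only separate analyticity, whereas the implicit function theorem needs $G$ to be jointly analytic. You can settle this at once by writing $\beta(a,s)=s\int_0^1\omega(a,us)\,du$ and complexifying both variables.
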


\begin{proof}
On $\{(a,t):|a|<\tfrac12,\ t\in\R\}$ the functions $\lambda_a(t)$ and $z_a(t)^2$ are real-analytic and bounded below by $\tfrac12-|a|>0$, so $\omega$ is real-analytic there. It extends holomorphically to a complex neighbourhood of each point, and the integration in \eqref{eq:dO-omega} runs over a compact interval, so Fubini and Morera give the joint real-analyticity of $\beta$; hence $x_a$, $y_a$, $z_a$ and $G_a$ are real-analytic jointly in $(a,s)$ on $(-\tfrac12,\tfrac12)\times\R$. By Lemma \ref{lem:s1}, $G_a(s_1(a))=0$ and $\partial_sG_a(s_1(a))>0$ for every $a\in[0,\tfrac12)$; the analytic implicit function theorem therefore yields a real-analytic branch through each point $(a,s_1(a))$, defined on a two-sided neighbourhood of that $a$ in $(-\tfrac12,\tfrac12)$, and by the uniqueness of the zero in $(0,\tfrac\pi2]$ these branches agree with $s_1$ on $[0,\tfrac12)$ and patch to a single analytic function there. Finally $|x_a(s_1)|\leq\lambda_a(s_1)^{1/2}\leq(\tfrac12+a)^{1/2}<1$, so $x_a(s_1(a))$ takes values in $(-1,1)$, where $\arccos$ is real-analytic; hence $R$ is real-analytic. For $a=0$ we have $\beta(0,s_1(0))=\sqrt2\cdot\pi/(2\sqrt2)=\tfrac\pi2$, so $x_0(s_1(0))=0$ and $R(0)=\tfrac\pi2$.
\end{proof}

\begin{lemma}[The radius returns to the hemisphere]\label{lem:return}
$\displaystyle\lim_{a\to(1/2)^-}R(a)=\frac\pi2 .$
\end{lemma}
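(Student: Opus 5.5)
The plan is to control $x_a(s_1(a))=\lambda_a(s_1(a))^{1/2}\cos\beta(a,s_1(a))$ directly, using the angular localization of Lemma \ref{lem:s1} together with the period limit \eqref{eq:Climit}. No analysis of $s_1(a)$ itself as $a\to\tfrac12$ should be needed: the free boundary point may drift, but its angle $\beta(a,s_1(a))$ is trapped between $\tfrac\pi2$ and $\tfrac12C_a$, and these two numbers merge in the limit.

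\emph{Step 1 (the angular pinch).} By Lemma \ref{lem:s1}, for every $a\in(0,\tfrac12)$,
\[
\frac\pi2<\beta\bigl(a,s_1(a)\bigr)<\tfrac12C_a ,
\]
and $\tfrac12C_a<\pi$ by \eqref{eq:beta-half}. Since $\cos$ is strictly decreasing on $(\tfrac\pi2,\pi)$ and negative there, this gives
\[
\cos\bigl(\tfrac12C_a\bigr)<\cos\beta\bigl(a,s_1(a)\bigr)<0,\qquad\text{so}\qquad \bigl|\cos\beta(a,s_1(a))\bigr|<\bigl|\cos\bigl(\tfrac12C_a\bigr)\bigr| .
\]

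\emph{Step 2 (bounding the first coordinate).} From \eqref{eq:dO-defs}, $0<\lambda_a(s)\le\tfrac12+a<1$, so $\lambda_a(s_1(a))^{1/2}\le 1$. Combining with Step 1,
\[
0<-x_a\bigl(s_1(a)\bigr)\le\bigl|\cos\bigl(\tfrac12C_a\bigr)\bigr| .
\]
By \eqref{eq:Climit}, $\tfrac12C_a\to\tfrac\pi2$ as $a\to(\tfrac12)^-$. Hence $\cos(\tfrac12C_a)\to 0$, and therefore $x_a(s_1(a))\to 0$.

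\emph{Step 3 (passing to $R$).} By \eqref{eq:Rdef}, $R(a)=\arccos x_a(s_1(a))$. Since $\arccos$ is continuous at $0$ with $\arccos 0=\tfrac\pi2$, Step 2 gives $R(a)\to\tfrac\pi2$. The approach is from above, consistently with $R>\tfrac\pi2$.

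The only genuinely nontrivial input is the limit \eqref{eq:Climit}, already proved in Lemma \ref{lem:period}. The upper bound $\beta(a,s_1)<\tfrac12C_a$ rests on $s_1<\tfrac\pi2$ and the monotonicity of $\beta$, both established in Lemma \ref{lem:s1}. I expect no real obstacle. The one point to watch is that the pinch argument must not require knowledge of $\lim s_1(a)$, which might be delicate since $\omega$ degenerates as $c(a)\to0$; the argument above avoids it entirely.
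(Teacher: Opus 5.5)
Your proof is correct and follows the paper's argument essentially verbatim. Both use the pinch $\tfrac\pi2<\beta(a,s_1(a))<\tfrac12C_a$ from Lemma \ref{lem:s1}, the limit \eqref{eq:Climit} and the bound $\lambda_a^{1/2}<1$ to get $x_a(s_1(a))\to 0$. The only cosmetic differences are these: you bound $|\cos\beta|$ by $|\cos(\tfrac12C_a)|$ using the monotonicity of $\cos$, where the paper squeezes $\beta$ itself; and you pass to $R$ by continuity of $\arccos$, where the paper uses $R\in(\tfrac\pi2,\pi)$.
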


\begin{proof}
Write $s_1=s_1(a)$. By Lemma \ref{lem:s1}, $\beta(a,s_1)\in\bigl(\tfrac\pi2,\tfrac12C_a\bigr)$. By \eqref{eq:Climit}, $\tfrac12C_a\to\tfrac\pi2$ as $a\to(\tfrac12)^-$, while $\tfrac12 C_a>\tfrac\pi2$ by \eqref{eq:Cbounds}; hence $\beta(a,s_1)\to\tfrac\pi2$ and $\cos\beta(a,s_1)\to 0$. Since
\[
0<\lambda_a(s_1)^{1/2}\leq\bigl(\tfrac12+a\bigr)^{1/2}<1,
\]
we get
\[
\bigl|\cos R(a)\bigr|=\bigl|x_a(s_1)\bigr|=\lambda_a(s_1)^{1/2}\,\bigl|\cos\beta(a,s_1)\bigr|\longrightarrow 0 .
\]
As $x_a(s_1)<0$ gives $R(a)\in(\tfrac\pi2,\pi)$, we conclude $R(a)\to(\tfrac\pi2)^+$.
\end{proof}

\subsection*{A closed form for the radius}

Corollary \ref{cor:ctb} removes the quadrature $\beta$ from the radius altogether.

\begin{proposition}[Closed form of the cap radius]\label{prop:Rclosed}
For every $a\in[0,\tfrac12)$,
\begin{equation}\label{eq:Rclosed}
\cot R(a)=\frac{\dot z_a\bigl(s_1(a)\bigr)}{z_a\bigl(s_1(a)\bigr)}=-\,\frac{a\sin\bigl(2s_1(a)\bigr)}{\tfrac12+a\cos\bigl(2s_1(a)\bigr)} .
\end{equation}
In particular $R$ is an elementary function of $a$ and of the single scalar $s_1(a)$.
\end{proposition}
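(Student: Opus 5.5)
The plan is to apply Corollary \ref{cor:ctb} directly to $\Sigma_a$. By \eqref{eq:dO-X} the induced metric is $ds^2+z_a(s)^2d\theta^2$, so in the notation of that corollary $b=z_a$. By Remark \ref{rem:side}, at the boundary parameter $s_0=s_1(a)$ the outward conormal is $\eta=\partial_s$, and the free boundary lies on $S(R(a))$. Corollary \ref{cor:ctb} with $\kappa=1$ then gives
\[
\cot R(a)=\frac{\dot z_a(s_1(a))}{z_a(s_1(a))},
\]
which is the first equality in \eqref{eq:Rclosed}.

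For the second equality I differentiate $z_a^2=\tfrac12+a\cos 2s$, which gives $z_a\dot z_a=-a\sin 2s$. Hence
\[
\frac{\dot z_a}{z_a}=\frac{z_a\dot z_a}{z_a^2}=-\frac{a\sin 2s}{\tfrac12+a\cos 2s},
\]
and evaluating at $s=s_1(a)$ gives the right-hand side of \eqref{eq:Rclosed}. The statement that $R$ is elementary in $a$ and $s_1(a)$ then follows, because $\operatorname{arccot}$ is single-valued on $(0,\pi)$ and $R(a)\in(0,\pi)$.

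The only point needing care is the endpoint $a=0$ and the orientation. Remark \ref{rem:side} uses $\dot x_a(s_1)<0$, which Lemma \ref{lem:s1} proves only for $a\in(0,\tfrac12)$. At $a=0$ I check the formula directly instead. There $z_0\equiv 1/\sqrt2$, so both sides of \eqref{eq:Rclosed} vanish, consistent with $R(0)=\tfrac\pi2$ from Lemma \ref{lem:analytic}.

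As a consistency check one can also verify the formula by hand from $\cos R=x_a(s_1)$, using the free boundary relation \eqref{eq:fbc-clean}. I expect this to be unnecessary and will rely on the umbilicity argument. The main obstacle, though mild, is making sure the sign convention is right: that $\eta=+\partial_s$ rather than $-\partial_s$ at $s_1$. The sign check from Remark \ref{rem:side} settles this, and it agrees with $\cot R<0$ on the left and $\sin(2s_1)>0$ on the right.
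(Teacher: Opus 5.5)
Your proposal is correct and follows the paper's proof. Corollary~\ref{cor:ctb} with $\kappa=1$, $b=z_a$ and $\eta=\partial_s$ at $s_1(a)$ gives the first equality, and $z_a\dot z_a=-a\sin 2s$ together with $z_a^2=\tfrac12+a\cos 2s$ gives the second. Your separate direct check at $a=0$, where $z_0$ is constant and $R(0)=\tfrac\pi2$, is extra care that the paper leaves implicit.
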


\begin{proof}
The induced metric of $\Sigma_a$ is $ds^2+z_a(s)^2d\theta^2$ and the outward conormal at $s=s_1(a)$ is $\partial_s$, so Corollary \ref{cor:ctb}, applied with $\kappa=1$, $b=z_a$ and $r=R(a)$, gives the first equality. The second follows from $z_a\dot z_a=-a\sin 2s$ and $z_a^2=\tfrac12+a\cos 2s$.
\end{proof}

\begin{remark}\label{rem:Rclosed}
Identity \eqref{eq:Rclosed} yields at once two of the assertions of Theorem \ref{thm:main}(i), by a route independent of the location of $\beta(a,s_1(a))$. For $a\in(0,\tfrac12)$ Lemma \ref{lem:s1} gives $s_1(a)\in(0,\tfrac\pi2)$, hence $\sin(2s_1(a))>0$ and $\cot R(a)<0$; since $R(a)\in(0,\pi)$ this forces $R(a)>\tfrac\pi2$. At $a=0$ the numerator vanishes, so $\cot R(0)=0$ and $R(0)=\tfrac\pi2$. Likewise, by \eqref{eq:Rclosed} the limit $R(a)\to\tfrac\pi2$ as $a\to(\tfrac12)^-$ is equivalent to $a\sin(2s_1(a))/z_a(s_1(a))^2\to 0$.
\end{remark}

\begin{proposition}[Closed form of the derivative of the radius]\label{prop:Rprime}
For every $a\in(0,\tfrac12)$, writing $s_1=s_1(a)$ and $z=z_a(s_1)$,
\begin{equation}\label{eq:Rprime}
R'(a)=\frac{\tfrac12\sin(2s_1)+2a\bigl(z^2\cos(2s_1)+a\sin^2(2s_1)\bigr)\,s_1'(a)}{z^4+a^2\sin^2(2s_1)} ,
\end{equation}
whose denominator is strictly positive, and
\begin{equation}\label{eq:s1prime}
s_1'(a)=-\,\frac{z_a(s_1)}{y_a(s_1)}\,\bigl(\partial_aG_a\bigr)\bigl(s_1\bigr).
\end{equation}
In particular $a$ is a critical point of $R$ if and only if
\begin{equation}\label{eq:critical}
\tfrac12\sin(2s_1)+2a\bigl(z^2\cos(2s_1)+a\sin^2(2s_1)\bigr)\,s_1'(a)=0 .
\end{equation}
\end{proposition}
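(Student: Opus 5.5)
The plan is to differentiate the closed form \eqref{eq:Rclosed} of Proposition \ref{prop:Rclosed} in $a$, and to obtain $s_1'(a)$ from the implicit function theorem applied to $G_a(s_1(a))=0$.

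\emph{Step 1: the formula \eqref{eq:s1prime}.} By Lemma \ref{lem:analytic}, $s_1$ is analytic on $(0,\tfrac12)$. Differentiating the identity $G_a(s_1(a))=0$ gives
\[
(\partial_aG_a)(s_1)+G_a'(s_1)\,s_1'(a)=0 .
\]
By \eqref{eq:Gprime} we have $G_a'(s_1)=y_a(s_1)/z_a(s_1)$, and this is nonzero because $y_a(s_1)>0$, as shown in the proof of Lemma \ref{lem:s1}. Solving for $s_1'(a)$ gives \eqref{eq:s1prime} directly.

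\emph{Step 2: the formula \eqref{eq:Rprime}.} Set
\[
N=a\sin(2s_1),\qquad D=\tfrac12+a\cos(2s_1)=z^2 .
\]
Then $\cot R=-N/D$. Differentiating in $a$ gives $-\csc^2R\cdot R'=-(N'D-ND')/D^2$, so
\[
R'=\sin^2R\,\frac{N'D-ND'}{D^2}.
\]
Since $\sin^2R=1/(1+\cot^2R)=D^2/(D^2+N^2)$, this simplifies to
\[
R'=\frac{N'D-ND'}{D^2+N^2}.
\]
The denominator is $z^4+a^2\sin^2(2s_1)$, which is positive because $z>0$. For the numerator, the total derivatives are
\[
N'=\sin(2s_1)+2a\cos(2s_1)\,s_1',\qquad D'=\cos(2s_1)-2a\sin(2s_1)\,s_1' .
\]
Collecting the terms without $s_1'$ gives
\[
\sin(2s_1)\bigl(\tfrac12+a\cos(2s_1)\bigr)-a\sin(2s_1)\cos(2s_1)=\tfrac12\sin(2s_1).
\]
Collecting the coefficient of $s_1'$ gives
\[
2a\cos(2s_1)\,z^2+2a^2\sin^2(2s_1)=2a\bigl(z^2\cos(2s_1)+a\sin^2(2s_1)\bigr).
\]
Together these give \eqref{eq:Rprime}.

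\emph{Step 3: the criticality condition \eqref{eq:critical}.} Since the denominator in \eqref{eq:Rprime} is positive, $R'(a)=0$ holds exactly when the numerator vanishes, which is \eqref{eq:critical}.

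The main point to watch is Step 2. One has to differentiate $N$ and $D$ totally in $a$, including the dependence through $s_1(a)$, and the cancellation of the $s_1'$-free cross terms only works after using $D=z^2$. Otherwise the computation is routine.
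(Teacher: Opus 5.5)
Your proposal is correct and is essentially the paper's proof. You differentiate the closed form $\cot R=-a\sin(2s_1)/z^2$ of Proposition \ref{prop:Rclosed} by the quotient rule on total derivatives, whereas the paper uses the partial derivatives of $F(a,s)$ and the chain rule, which is the same computation; both then use $\sin^2R=z^4/\bigl(z^4+a^2\sin^2(2s_1)\bigr)$ and obtain $s_1'$ by implicit differentiation of $G_a(s_1(a))=0$, with \eqref{eq:Gprime} and $y_a(s_1)>0$.
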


\begin{proof}
Put $F(a,s):=-a\sin(2s)\bigl/\bigl(\tfrac12+a\cos 2s\bigr)$, so that $\cot R(a)=F\bigl(a,s_1(a)\bigr)$ by \eqref{eq:Rclosed}, and abbreviate $z^2=\tfrac12+a\cos 2s=z_a(s)^2$. Differentiating $F$,
\[
\partial_aF=-\frac{\sin 2s}{2z^4},\qquad
\partial_sF=-\frac{2a\bigl(z^2\cos 2s+a\sin^2 2s\bigr)}{z^4},
\]
the first because $\bigl(\tfrac12+a\cos2s\bigr)-a\cos 2s=\tfrac12$. Since $s_1$ is differentiable by Lemma \ref{lem:analytic}, the chain rule gives
\[
\bigl(\cot R\bigr)'(a)=-\frac{1}{z^4}\Bigl[\tfrac12\sin(2s_1)+2a\bigl(z^2\cos(2s_1)+a\sin^2(2s_1)\bigr)s_1'(a)\Bigr],
\]
all functions being evaluated at $s=s_1(a)$. As $R(a)\in(0,\pi)$ one has $\bigl(\cot R\bigr)'=-R'/\sin^2R$, while \eqref{eq:Rclosed} gives
\[
\sin^2R=\bigl(1+\cot^2R\bigr)^{-1}=\frac{z^4}{z^4+a^2\sin^2(2s_1)} .
\]
Combining the three displays yields \eqref{eq:Rprime}, and \eqref{eq:critical} follows because the denominator of \eqref{eq:Rprime} is positive. Finally, \eqref{eq:s1prime} is the implicit differentiation of $G_a(s_1(a))=0$, legitimate by Lemma \ref{lem:analytic}, together with $\partial_sG_a(s_1)=y_a(s_1)/z_a(s_1)$ from \eqref{eq:Gprime} and $y_a(s_1)>0$ from the proof of Lemma \ref{lem:s1}.
\end{proof}

All the transcendence of the radius map is thus concentrated in the single scalar $s_1(a)$, and, through \eqref{eq:s1prime}, in the single quadrature $\partial_a\beta$; see Problem \ref{prob:caps}.

\subsection*{The parametric field}

We first bring the family into the form of Definition \ref{def:family}. Let $\Sigma=[-1,1]\times\Sph^1$ and set
\begin{equation}\label{eq:Psi}
\Psi_a(\tau,\theta):=X_a\bigl(s_1(a)\tau,\theta\bigr),\qquad a\in\bigl(0,\tfrac12\bigr).
\end{equation}
By Lemma \ref{lem:analytic} the map $(a,\tau,\theta)\mapsto\Psi_a(\tau,\theta)$ is real-analytic; every $\Psi_a$ is a free boundary minimal immersion in $B(R(a))$ with $\Psi_a(\partial\Sigma)\subset S(R(a))$; and $\Psi$ is rotational and reflection-symmetric in the sense of \eqref{eq:reflection}, with $\sigma:x^2\mapsto-x^2$ and $\tau\mapsto-\tau$. Thus $\{\Psi_a\}$ is a reflection-symmetric rotational variable-radius family, with radius function $R$. Its parametric field is
\[
\varphi_a=\bigl\langle\partial_a\Psi_a,\nu_a\bigr\rangle,\qquad \partial_a\Psi_a=\partial_aX_a+s_1'(a)\,\tau\,\partial_sX_a ,
\]
and since $\partial_sX_a$ is tangent to $\Sigma_a$, the correction $s_1'(a)\,\tau\,\partial_sX_a$ never contributes: $\varphi_a=\langle\partial_aX_a,\nu_a\rangle$ at every point, the derivative $\partial_aX_a$ being taken at fixed $s$ and evaluated at $s=s_1(a)\tau$.

\begin{lemma}[Nondegeneracy of the parametric field]\label{lem:phinonzero}
For every $a\in(0,\tfrac12)$ one has
\[
\bigl|\varphi_a(0)\bigr|=\frac{1}{2\,c(a)}=\frac{1}{2\bigl(\tfrac14-a^2\bigr)^{1/2}}>0 .
\]
In particular $\varphi_a\not\equiv 0$ for every $a\in(0,\tfrac12)$.
\end{lemma}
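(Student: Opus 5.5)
The plan is to evaluate $\varphi_a$ explicitly at the waist $s=0$ of the profile, where everything is elementary because $\beta(a,0)=0$ for every $a$. By the discussion following \eqref{eq:Psi}, the reparametrization correction $s_1'(a)\,\tau\,\partial_sX_a$ is tangent to $\Sigma_a$ and does not contribute, so $\varphi_a=\langle\partial_aX_a,\nu_a\rangle$ with $\partial_aX_a$ taken at fixed $s$. At $\tau=0$ this means $s=0$, and I only need two vectors in $\R^4$ at the points $X_a(0,\theta)$: the variation $\partial_aX_a(0,\theta)$ and the unit normal $\nu_a(0,\theta)$.

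\emph{Step 1: the variation vector.} Since $\beta(a,0)=0$ identically in $a$, also $\partial_a\beta(a,0)=0$. Hence $\partial_ay_a(0)=0$, while $x_a(0)=\lambda_a(0)^{1/2}=(\tfrac12-a)^{1/2}$ and $z_a(0)=(\tfrac12+a)^{1/2}$. Differentiating gives
\[
\partial_aX_a(0,\theta)=\Bigl(-\tfrac{1}{2\lambda_a(0)^{1/2}},\,0,\,\tfrac{\cos\theta}{2z_a(0)},\,\tfrac{\sin\theta}{2z_a(0)}\Bigr).
\]

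\emph{Step 2: the unit normal.} At $s=0$ we have $\dot x_a(0)=0$ and $\dot z_a(0)=0$ (as in the proof of Proposition \ref{prop:embcont}), and $\dot y_a(0)=1$ (computed in the proof of Lemma \ref{lem:s1}). Thus $\partial_sX_a(0,\theta)=(0,1,0,0)$, and $\partial_\theta X_a=(0,0,-z\sin\theta,z\cos\theta)$. The unit vector orthogonal to $X_a$, $\partial_sX_a$ and $\partial_\theta X_a$ is
\[
\nu_a(0,\theta)=\pm\bigl(z_a(0),\,0,\,-\lambda_a(0)^{1/2}\cos\theta,\,-\lambda_a(0)^{1/2}\sin\theta\bigr).
\]
One checks directly that it has unit length, using $z^2+\lambda=1$, and that it is orthogonal to each of the three vectors.

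\emph{Step 3: the pairing.} Taking the inner product and writing $\lambda=\lambda_a(0)$, $z=z_a(0)$,
\[
|\varphi_a(0)|=\frac{z}{2\lambda^{1/2}}+\frac{\lambda^{1/2}}{2z}=\frac{z^2+\lambda}{2z\lambda^{1/2}}=\frac{1}{2\bigl[(\tfrac12+a)(\tfrac12-a)\bigr]^{1/2}}=\frac{1}{2c(a)},
\]
which is the claimed value, independent of $\theta$ and of the sign choice for $\nu_a$. Its positivity gives $\varphi_a\not\equiv0$.

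The only point needing care is Step 1: one must justify that $\partial_a\beta(a,0)=0$ and that differentiation in $a$ is taken at fixed $s$. Both are immediate from the definition of $\beta$ as an integral from $0$ and from the tangency of the reparametrization correction, so no real obstacle is expected.
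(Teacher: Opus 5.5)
Your proposal is correct and follows the paper's proof essentially line by line. You evaluate at the waist $s=0$, use $\beta(a,0)=\partial_a\beta(a,0)=0$ to compute $\partial_aX_a$, read off the unit normal from $\partial_sX_a=(0,1,0,0)$ and $\partial_\theta X_a$, and pair the two using $z_a(0)^2+\lambda_a(0)=1$. The only difference is that you keep $\theta$ general where the paper sets $\theta=0$; this makes the rotational invariance of the value explicit but changes nothing of substance.
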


\begin{proof}
We compute at $(s,\theta)=(0,0)$. There $\beta(a,0)=0$, hence
\[
X_a(0,0)=\Bigl(\bigl(\tfrac12-a\bigr)^{1/2},\,0,\,\bigl(\tfrac12+a\bigr)^{1/2},\,0\Bigr).
\]
Since $\sin 2s$ vanishes at $s=0$, we have $\dot x_a(0)=0$ and $\dot z_a(0)=0$, while $\dot y_a(0)=1$ as computed in the proof of Lemma \ref{lem:s1}; hence
\[
\partial_sX_a(0,0)=(0,1,0,0),\qquad \partial_\theta X_a(0,0)=\bigl(0,0,0,z_a(0)\bigr).
\]
A unit normal at that point is orthogonal to $X_a(0,0)$, to $(0,1,0,0)$ and to $(0,0,0,z_a(0))$, hence has vanishing second and fourth components; writing it as $(\nu^1,0,\nu^3,0)$, the conditions $\nu^1x_a(0)+\nu^3z_a(0)=0$ and $(\nu^1)^2+(\nu^3)^2=1$ together with $x_a(0)^2+z_a(0)^2=1$ give
\[
\nu_a(0,0)=\pm\bigl(-z_a(0),\,0,\,x_a(0),\,0\bigr).
\]
Next, $\beta(a,0)=0$ and $\partial_a\beta(a,0)=0$, since $\beta(a,0)$ is an integral over a degenerate interval; therefore
\[
\partial_aX_a(0,0)=\Bigl(-\tfrac12\bigl(\tfrac12-a\bigr)^{-1/2},\ 0,\ \tfrac12\bigl(\tfrac12+a\bigr)^{-1/2},\ 0\Bigr).
\]
Consequently, up to the sign of $\nu_a$,
\[
\varphi_a(0)=\frac{1}{2}\frac{\bigl(\tfrac12+a\bigr)^{1/2}}{\bigl(\tfrac12-a\bigr)^{1/2}}+\frac{1}{2}\frac{\bigl(\tfrac12-a\bigr)^{1/2}}{\bigl(\tfrac12+a\bigr)^{1/2}}
=\frac{1}{2}\cdot\frac{\bigl(\tfrac12+a\bigr)+\bigl(\tfrac12-a\bigr)}{\bigl[\bigl(\tfrac12-a\bigr)\bigl(\tfrac12+a\bigr)\bigr]^{1/2}}=\frac{1}{2\,c(a)} .\qedhere
\]
\end{proof}

\subsection*{Proof of Theorem A}

\begin{lemma}[The core circle detects the parameter]\label{lem:core}
Let $a_1,a_2\in(0,\tfrac12)$ and suppose that $\Sigma_{a_1}$ and $\Sigma_{a_2}$ are isometric with their induced metrics. Then $a_1=a_2$. In particular, distinct parameters give non-congruent annuli.
\end{lemma}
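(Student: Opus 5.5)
The plan is to find an intrinsic scalar invariant of $(\Sigma_a,ds^2+z_a^2d\theta^2)$ that is a strictly monotone function of $a$. The natural choice is the maximum of the Gauss curvature, which is attained on the core circle $s=0$; this is what the name of the lemma suggests. An isometry between $\Sigma_{a_1}$ and $\Sigma_{a_2}$ carries Gauss curvature to Gauss curvature, so it preserves $\max_{\Sigma}K$, and monotonicity in $a$ then forces $a_1=a_2$. The final assertion follows because congruent annuli, differing by an ambient isometry of $\Sph^3$, are in particular isometric with their induced metrics.

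\emph{Step 1: the Gauss curvature in closed form.} By the Gauss equation in $\Sph^3$ and \eqref{eq:Aclosed}, with principal curvatures $\pm\mu_a$ and $\mu_a^2=c(a)^2/z_a^4$, we have
\[
K_a=1-\mu_a^2=1-\frac{c(a)^2}{z_a(s)^4}\qquad\text{on all of }\Sigma_a .
\]

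\emph{Step 2: where the maximum is attained.} We have $z_a(s)^2=\tfrac12+a\cos 2s$, which is even in $s$. Since $a>0$ and $|s|\leq s_1(a)<\tfrac\pi2$ by Lemma \ref{lem:s1}, $z_a^2$ is strictly decreasing in $|s|$ on $[-s_1(a),s_1(a)]$. Hence $K_a$ is also strictly decreasing in $|s|$, and its maximum over $\Sigma_a$ is attained exactly on the core circle $s=0$. The value there is
\[
\max_{\Sigma_a}K_a=1-\frac{\tfrac14-a^2}{\bigl(\tfrac12+a\bigr)^2}=1-\frac{\tfrac12-a}{\tfrac12+a}=\frac{2a}{\tfrac12+a}.
\]

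\emph{Step 3: monotonicity and conclusion.} The function $a\mapsto 2a/(\tfrac12+a)$ has derivative $1/(\tfrac12+a)^2>0$, so it is strictly increasing on $(0,\tfrac12)$. Therefore equal maxima force $a_1=a_2$.

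There is no genuine obstacle. The only point needing care is that the maximum is taken over the compact annulus $[-s_1,s_1]\times\Sph^1$ rather than the complete surface. This is harmless, because the maximizing circle $s=0$ is interior to both domains, so the invariant does not depend on $s_1(a)$. Alternatively, one could note that $s=0$ is the unique parallel of maximal length, $2\pi(\tfrac12+a)^{1/2}$. However, that characterization is intrinsic only among rotational parallels, so the curvature invariant is the cleaner choice.
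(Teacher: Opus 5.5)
Your proof is correct, but it takes a different route from the paper's. The paper avoids curvature and uses a purely metric invariant. Since $ds^2+z_a^2\,d\theta^2\geq ds^2$, one gets $\dist_{\Sigma_a}\bigl((s,\theta),\partial\Sigma_a\bigr)=s_1(a)-|s|$. The core circle $\{s=0\}$ is therefore characterized intrinsically as the set of points at maximal distance from the boundary. Its length $2\pi(\tfrac12+a)^{1/2}$ is then an isometry invariant, and it is strictly increasing in $a$. This is exactly the repair of the alternative you set aside at the end: the length of the core parallel is not intrinsic by itself, but it becomes intrinsic once the core is located through the distance to $\partial\Sigma_a$.

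The two arguments buy different things. The paper's uses only the warped-product form of the metric and the value $z_a(0)$, but it needs the boundary and its position at $|s|=s_1(a)$. Yours uses the closed form \eqref{eq:Aclosed} and the Gauss equation, and in exchange gives a local invariant that ignores the boundary entirely. It therefore also distinguishes any compact pieces of the complete surfaces that contain the core circle.

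You do not need $s_1(a)<\tfrac\pi2$ to get the value of the maximum. For $a>0$ one has $z_a(s)^2=\tfrac12+a\cos 2s\leq\tfrac12+a$ for every $s$, with equality at $s=0$. This already gives $\max_{\Sigma_a}K_a=2a/(\tfrac12+a)$.
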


\begin{proof}
The induced metric of $\Sigma_a$ is $ds^2+z_a(s)^2d\theta^2$ on $[-s_1(a),s_1(a)]\times\Sph^1$. For a curve joining $(s,\theta)$ to a point of $\partial\Sigma_a$, the length is at least $\int|\dot s|\geq s_1(a)-|s|$, with equality along the meridian; hence
\[
\dist_{\Sigma_a}\bigl((s,\theta),\partial\Sigma_a\bigr)=s_1(a)-|s| .
\]
Therefore $s_1(a)=\max_{\Sigma_a}\dist(\cdot,\partial\Sigma_a)$, and the set where this maximum is attained is the circle $\Gamma_a:=\{s=0\}$, whose length is $2\pi z_a(0)=2\pi\bigl(\tfrac12+a\bigr)^{1/2}$. Both quantities are determined by the isometry class of $(\Sigma_a,g_a)$, so an isometry $\Sigma_{a_1}\to\Sigma_{a_2}$ maps $\Gamma_{a_1}$ onto $\Gamma_{a_2}$ and preserves its length, giving $\bigl(\tfrac12+a_1\bigr)^{1/2}=\bigl(\tfrac12+a_2\bigr)^{1/2}$, that is $a_1=a_2$.
\end{proof}

\begin{theorem}[Theorem A]\label{thm:main}
Let $\{\Sigma_a\}_{a\in(0,1/2)}$ and $R$ be as above. Then:
\begin{enumerate}
\item[\textup{(i)}] $R$ is real-analytic on $(0,\tfrac12)$, $R>\tfrac\pi2$ there, and $R(a)\to\tfrac\pi2$ both as $a\to 0^+$ and as $a\to(\tfrac12)^-$. In particular $R$ is non-constant and non-monotone, and it attains its maximum at some interior point $a_*\in(0,\tfrac12)$, where $R'(a_*)=0$ and $R(a_*)>\tfrac\pi2$.
\item[\textup{(ii)}] For every $\rho\in\bigl(\tfrac\pi2,\max R\bigr)$ the set $\{a\in(0,\tfrac12):R(a)=\rho\}$ is finite and contains at least two elements; the corresponding annuli are embedded free boundary minimal annuli in $B(\rho)$ and are mutually non-congruent. In particular $R$ is not injective.
\item[\textup{(iii)}] Let $a_*\in(0,\tfrac12)$ satisfy $R'(a_*)=0$. Then $\varphi_{a_*}$ is a nonzero rotationally invariant, reflection-even Jacobi--Robin field of $\Sigma_{a_*}$; hence $\dim\Ker_0^{\mathrm{ev}}(\Sigma_{a_*})=1$ and, by Proposition \ref{prop:killnul}, $\nul(\Sigma_{a_*})\geq 3$. Moreover $\varphi_{a_*}$ is not induced by any Killing field of $\Sph^3$ whose flow preserves $B(R(a_*))$.
\item[\textup{(iv)}] The critical set $\{R'=0\}$ is discrete in $(0,\tfrac12)$ and finite in every compact subinterval, and $\dim\Ker_0^{\mathrm{ev}}(\Sigma_a)=0$ for every $a$ outside it.
\end{enumerate}
\end{theorem}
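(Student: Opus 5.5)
The plan is to assemble Theorem \ref{thm:main} from the lemmas of \S\ref{sec:caps} and the general results of \S\ref{sec:dictionary}, item by item.

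\emph{Item (i).} Real-analyticity of $R$ on $(0,\tfrac12)$ is Lemma \ref{lem:analytic}, and $R>\tfrac\pi2$ is Remark \ref{rem:Rclosed}. The two limits are Lemma \ref{lem:analytic} (at $a\to0^+$) and Lemma \ref{lem:return} (at $a\to(\tfrac12)^-$). Extend $R$ continuously to $[0,\tfrac12]$ by the value $\tfrac\pi2$ at both endpoints. Since the extension is strictly larger than $\tfrac\pi2$ inside, its maximum over the compact interval is attained at an interior $a_*$. There $R'(a_*)=0$ and $R(a_*)>\tfrac\pi2$. Non-monotonicity follows at once, because a monotone function with equal limits at both ends would be constant, contradicting $R>\tfrac\pi2=\lim R$.

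\emph{Item (ii).} Fix $\rho\in(\tfrac\pi2,\max R)$. The intermediate value theorem applied on $(0,a_*)$ and on $(a_*,\tfrac12)$ gives at least two preimages. For finiteness, suppose the level set $\{R=\rho\}$ were infinite. The limits at the ends keep it inside some compact $[\delta,\tfrac12-\delta]$, because $R<\rho$ near the endpoints. An infinite set there has an accumulation point in $(0,\tfrac12)$, and real-analyticity would then force $R\equiv\rho$, contradicting the limits. Embeddedness and containment in $B(\rho)$ come from Proposition \ref{prop:embcont}. Non-congruence follows from Lemma \ref{lem:core}, since a congruence induces an isometry of the induced metrics.

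\emph{Item (iii).} First check that $\{\Psi_a\}$ of \eqref{eq:Psi} satisfies all hypotheses of Theorem \ref{thm:exactnul} and Theorem \ref{thm:fold}:
\begin{itemize}
\item it is a reflection-symmetric rotational variable-radius family, with contact from inside by Remark \ref{rem:side};
\item $\varphi_a\not\equiv0$ by Lemma \ref{lem:phinonzero};
\item $A_a(\eta,\eta)\neq0$ everywhere on $\partial\Sigma$ by \eqref{eq:Aclosed};
\item $\kappa=1$, and the rotation axis (the $x^1x^2$ great circle) passes through $p_0$.
\end{itemize}
Theorem \ref{thm:fold}(i) then gives the nonzero even mode-zero Jacobi--Robin field, $\dim\Ker_0^{\mathrm{ev}}=1$, and $\nul\ge3$ via Proposition \ref{prop:killnul}. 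Theorem \ref{thm:fold}(iii) gives the non-Killing statement.

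\emph{Item (iv).} $R'$ is real-analytic on $(0,\tfrac12)$ and not identically zero, since $R$ is non-constant. Hence its zero set is discrete and finite on compact subintervals. Off this set, Theorem \ref{thm:exactnul} gives $\dim\Ker_0^{\mathrm{ev}}(\Sigma_a)=0$.

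The main point needing care is the bookkeeping that the reparametrized family \eqref{eq:Psi}, rather than $X_a$ on the moving domain, is the one to which Definition \ref{def:family} applies. Concretely, I must verify that its parametric field and evenness coincide with those of $X_a$ by Remark \ref{rem:reparam}, and that the reflection acts by $\tau\mapsto-\tau$. Everything else is direct citation.
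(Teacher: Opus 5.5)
Your proposal is correct and follows the paper's proof essentially step by step. You use the same continuous extension to $[0,\tfrac12]$ for the interior maximum, the intermediate value theorem on both sides of $a_*$ together with the isolated zeros of $R-\rho$ for (ii), and the verification of the hypotheses of Theorems \ref{thm:exactnul} and \ref{thm:fold} for $\{\Psi_a\}$ in (iii) and (iv). The only cosmetic difference is that you take $R>\tfrac\pi2$ from the closed form of Remark \ref{rem:Rclosed}, whereas the proof uses $\cos\beta(a,s_1(a))<0$ via \eqref{eq:Rdef}; the paper records both routes.
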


\begin{proof}
(i) Real-analyticity is Lemma \ref{lem:analytic}; the strict inequality $R>\tfrac\pi2$ is \eqref{eq:Rdef}, which rests on $\cos\beta(a,s_1(a))<0$ from Lemma \ref{lem:s1}; and the two limits are Lemma \ref{lem:analytic} and Lemma \ref{lem:return}. Extending $R$ continuously to $[0,\tfrac12]$ by the value $\tfrac\pi2$ at both endpoints, we obtain a continuous function on a compact interval which is strictly larger than its endpoint values in the interior; hence it attains its maximum at an interior point $a_*$, where $R'(a_*)=0$ and $R(a_*)>\tfrac\pi2$. Since $R$ takes values arbitrarily close to $\tfrac\pi2$ near both endpoints while $R(a_*)>\tfrac\pi2$, the map $R$ is neither monotone nor constant.

(ii) Fix $\rho\in(\tfrac\pi2,\max R)$. Since $R$ is continuous, tends to $\tfrac\pi2<\rho$ at both endpoints and satisfies $R(a_*)>\rho$, the intermediate value theorem provides $a_1\in(0,a_*)$ and $a_2\in(a_*,\tfrac12)$ with $R(a_1)=R(a_2)=\rho$. The set $E_\rho:=\{a:R(a)=\rho\}$ is closed in $(0,\tfrac12)$ and, by the two limits, contained in a compact subinterval of $(0,\tfrac12)$; since $R$ is real-analytic and non-constant, $R-\rho$ has isolated zeros, so $E_\rho$ is a compact set of isolated points and therefore finite. Embeddedness and containment in $B(\rho)$ are Proposition \ref{prop:embcont}, and non-congruence is Lemma \ref{lem:core}.

(iii) By \eqref{eq:Psi} the family $\{\Psi_a\}$ is a reflection-symmetric rotational variable-radius family in $\M^3_1=\Sph^3$; by Lemma \ref{lem:phinonzero}, $\varphi_a\not\equiv 0$ for every $a$; and by \eqref{eq:Aclosed}, $A_a(\eta,\eta)\neq 0$ on $\partial\Sigma$ with constant sign, in accordance with Lemma \ref{lem:bdrycurv}. Hence Theorem \ref{thm:exactnul} and Theorem \ref{thm:fold} apply with $r=R$, and (iii) is Theorem \ref{thm:fold}(i) and (iii).

(iv) By (i), $R$ is real-analytic and non-constant on $(0,\tfrac12)$, so $R'$ is real-analytic and not identically zero; its zeros are therefore isolated, and finite in number on every compact subinterval. The last assertion is Theorem \ref{thm:exactnul}.
\end{proof}

\begin{definition}[A degenerate annulus of maximal cap radius]\label{def:degannulus}
By Theorem \ref{thm:main}(i) the radius map attains its maximum at an interior parameter. Fix a maximizer $a_*\in(0,\tfrac12)$ and set $R_*:=\max_{(0,1/2)}R=R(a_*)>\tfrac\pi2$. We call $\Sigma_{a_*}$ \emph{a degenerate annulus of maximal cap radius} of the family, and $R_*$ its \emph{maximal cap radius}. The name is justified by Theorem \ref{thm:main}(iii): $\Sigma_{a_*}$ is an embedded free boundary minimal annulus, contained in the largest geodesic ball $B(R_*)$ reached by the family, whose Jacobi--Robin kernel contains a rotationally invariant, reflection-even element not induced by any ambient Killing field preserving the ball. Here \emph{degenerate} is understood modulo the ambient isometries preserving the ball, as is customary: by Proposition \ref{prop:killnul} every member of the family satisfies $\nul(\Sigma_a)\geq 2$ on account of Killing-induced fields alone, so that bare nontriviality of the kernel distinguishes nothing; what singles out $\Sigma_{a_*}$ is a kernel element which is \emph{not} of that form. The value $R_*$ is canonical, whereas the maximizer need not a priori be unique: by Lemma \ref{lem:core} two distinct maximizers would give non-congruent annuli, and whether this occurs is part of Problem \ref{prob:caps}. By Theorem \ref{thm:main}(iii) \emph{every} critical point of $R$ yields a degenerate annulus, maximal or not; by Theorem \ref{thm:main}(iv) these form a discrete set. No statement below depends on the uniqueness of the maximizer.
\end{definition}

\begin{corollary}[Variational characterization]\label{cor:areamax}
Along the family $\{\Sigma_a\}_{a\in(0,1/2)}$ the critical points of the area $a\mapsto|\Sigma_a|$ coincide with the critical points of $R$, at which the annulus is degenerate by Theorem \ref{thm:main}(iii)--(iv). Moreover a degenerate annulus of maximal cap radius $\Sigma_{a_*}$ is a strict local maximizer of the area within the family.
\end{corollary}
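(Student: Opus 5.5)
The plan is to reduce both assertions to the first-variation identity \eqref{eq:areavar} of Proposition \ref{prop:area}, applied to the reparametrized family $\{\Psi_a\}$ of \eqref{eq:Psi}. By Remark \ref{rem:side}, $\{\Psi_a\}$ is a variable-radius family in the sense of Definition \ref{def:family}, with radius function $R$. Since $\Psi_a$ is a diffeomorphism onto $\Sigma_a$, the area of $\Psi_a$ is $|\Sigma_a|$. Proposition \ref{prop:area} then gives
\[
\frac{d}{da}|\Sigma_a|=R'(a)\,|\partial\Sigma_a|,\qquad |\partial\Sigma_a|=4\pi z_a\bigl(s_1(a)\bigr)>0 ,
\]
the boundary consisting of two circles of length $2\pi z_a(s_1(a))$. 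Hence the critical points of the area are exactly the zeros of $R'$. By Theorem \ref{thm:main}(iii) these are degenerate annuli, and by Theorem \ref{thm:main}(iv) they form a discrete set. This settles the first sentence.

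For the strict local maximality, fix a maximizer $a_*$ of $R$. By Theorem \ref{thm:main}(i),(iv), $R$ is real-analytic and non-constant on $(0,\tfrac12)$, so $R'$ has isolated zeros. Choose $\delta>0$ such that $R'\neq 0$ on $(a_*-\delta,a_*)\cup(a_*,a_*+\delta)$. By continuity, $R'$ has a constant sign on each of these two intervals. If $R'<0$ on the left interval, then $R(a)>R(a_*)$ for $a$ slightly less than $a_*$, contradicting maximality. Hence $R'>0$ on $(a_*-\delta,a_*)$, and symmetrically $R'<0$ on $(a_*,a_*+\delta)$.

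Multiplying by the positive factor $|\partial\Sigma_a|$, the derivative of $a\mapsto|\Sigma_a|$ is strictly positive on the left interval and strictly negative on the right one. So the area is strictly increasing on $(a_*-\delta,a_*]$ and strictly decreasing on $[a_*,a_*+\delta)$. Therefore $|\Sigma_a|<|\Sigma_{a_*}|$ for $0<|a-a_*|<\delta$, which is the claimed strict local maximum.

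The only delicate point is the sign argument on the two sides of $a_*$. It requires $a_*$ to be an \emph{isolated} zero of $R'$; without this, a maximum with $R'\equiv 0$ nearby would give only a non-strict maximum. Analyticity rules this out, which is exactly where Theorem \ref{thm:main}(iv) is used. The remaining step is to check that Proposition \ref{prop:area} applies, namely the conormal identity $\langle V_a,\eta\rangle=R'(a)$ from \eqref{eq:decomp}. That identity requires $\eta=\partial_\rho$ on both boundary circles, which is the content of Remark \ref{rem:side}.
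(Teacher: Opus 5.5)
Your proposal is correct and follows essentially the same route as the paper. Both arguments apply Proposition \ref{prop:area} with $r=R$ to get $\frac{d}{da}|\Sigma_a|=R'(a)\,|\partial\Sigma_a|$, then use real-analyticity to make $a_*$ an isolated zero of $R'$, so that maximality forces a sign change from positive to negative. Your explicit appeal to Remark \ref{rem:side} for $\eta=\partial_\rho$, and your formula $|\partial\Sigma_a|=4\pi z_a(s_1(a))$, simply spell out what the paper's proof leaves implicit.
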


\begin{proof}
By Proposition \ref{prop:area} with $r=R$ one has $\frac{d}{da}|\Sigma_a|=R'(a)\,|\partial\Sigma_a|$ with $|\partial\Sigma_a|>0$, so the two critical sets coincide; the degeneracy at those parameters is Theorem \ref{thm:main}(iii). Let $a_*$ be a maximizer of $R$. By Theorem \ref{thm:main}(i) $R$ is real-analytic and non-constant, so the zeros of $R'$ are isolated; hence there is $\delta>0$ with $R'\neq 0$ on $(a_*-\delta,a_*)\cup(a_*,a_*+\delta)$, and the maximality of $R(a_*)$ forces $R'>0$ on the left interval and $R'<0$ on the right. Thus $\frac{d}{da}|\Sigma_a|=R'(a)\,|\partial\Sigma_a|$ changes sign from positive to negative at $a_*$, and $|\Sigma_{a_*}|$ is a strict local maximum.
\end{proof}

\begin{corollary}[Failure of the Killing hypothesis above the hemisphere]\label{cor:Q66}
There exists $\rho>\tfrac\pi2$ and an embedded free boundary minimal annulus $\Sigma\subset B(\rho)\subset\Sph^3$ whose Jacobi--Robin kernel contains an element that is not induced by any Killing field of $\Sph^3$ preserving $B(\rho)$. Consequently, the property that all Jacobi fields of an embedded free boundary minimal annulus in a geodesic ball of $\Sph^3$ be Killing-induced cannot hold for all radii in $(0,\pi)$.
\end{corollary}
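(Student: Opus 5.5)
This corollary is a direct specialization of Theorem \ref{thm:main}, so the plan is to exhibit the annulus and check each clause against results already established.

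First, by Theorem \ref{thm:main}(i), $R$ attains its maximum at an interior parameter $a_*\in(0,\tfrac12)$. There $R'(a_*)=0$ and $R(a_*)>\tfrac\pi2$. I will set $\rho:=R(a_*)$ and $\Sigma:=\Sigma_{a_*}$, which is the degenerate annulus of maximal cap radius of Definition \ref{def:degannulus}; any other critical point of $R$ would serve equally well. By Proposition \ref{prop:embcont}, $\Sigma$ is an embedded annulus, contained in $B(\rho)$ and meeting $S(\rho)$ only along $\partial\Sigma$. By Remark \ref{rem:side} it meets the sphere orthogonally from inside, so it is a free boundary minimal annulus in $B(\rho)$ in the sense of \S\ref{sec:prelim}.

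Second, Theorem \ref{thm:main}(iii) applied at $a_*$ shows that $\varphi_{a_*}$ is a nonzero element of $\Ker(\Sigma,\rho)$ that is not induced by any Killing field of $\Sph^3$ whose flow preserves $B(\rho)$. Unwinding that result: $\varphi_{a_*}$ is nonzero by Lemma \ref{lem:phinonzero}, and it is a Jacobi--Robin field by Theorem \ref{thm:dictionary}, whose hypothesis $A(\eta,\eta)\not\equiv0$ holds by \eqref{eq:Aclosed}. It is not Killing-induced because it lies in Fourier mode zero, while Lemma \ref{lem:equivariance} says normal components of ball-preserving Killing fields have vanishing mode-zero part. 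This proves the first sentence of the corollary.

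The ``consequently'' clause then follows by contraposition. If every Jacobi field of every embedded free boundary minimal annulus in every cap $B(r)$, $r\in(0,\pi)$, were Killing-induced, the radius $r=\rho\in(\tfrac\pi2,\pi)$ and the annulus $\Sigma$ would be a counterexample.

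I expect no real obstacle here. The only point needing care is to confirm that ``Killing-induced'' in the corollary refers to the same algebra $\mathfrak{k}_0$ as in Lemma \ref{lem:equivariance}. That lemma identifies $\mathfrak{k}_0$ with the isotropy algebra of $p_0$ for every $r\in(0,\pi)$, including $r>\tfrac\pi2$, so the mode-zero argument applies verbatim at $\rho$.
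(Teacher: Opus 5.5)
Your proposal is correct and is essentially the paper's proof: take a maximizer $a_*$ of $R$, set $\rho=R(a_*)$ and $\Sigma=\Sigma_{a_*}$, get the non-Killing-induced kernel element from Theorem~\ref{thm:main}(iii), and get embeddedness from Proposition~\ref{prop:embcont}. Your unwinding of item (iii) is also how the paper assembles it: $\varphi_{a_*}\neq 0$ by Lemma~\ref{lem:phinonzero}, the Robin condition from the defect identity at $R'(a_*)=0$, and non-Killing-inducedness from the mode-zero argument of Lemma~\ref{lem:equivariance}.
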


\begin{proof}
Take $\rho=R_*$ and $\Sigma=\Sigma_{a_*}$ a degenerate annulus of maximal cap radius as in Definition \ref{def:degannulus}, and apply Theorem \ref{thm:main}(iii); embeddedness is Proposition \ref{prop:embcont}.
\end{proof}

\begin{remark}[Scope]\label{rem:scope}
Two points delimit Corollary \ref{cor:Q66}. First, the degenerate parameter $a_*$ lies in the regime $R>\tfrac\pi2$, strictly beyond the hemisphere; it is not in the range $R\in[0,\tfrac\pi2]$ on which the continuity and uniqueness programme of \cite[\S 6]{NZ1} operates. Question 6.6 of \cite{NZ1} carries no explicit restriction on $R$ in its own statement, but it appears in a section whose standing framework is $R\in[0,\tfrac\pi2]$ throughout, and \cite[Rem. 1.3]{NZ1} already records that uniqueness appears to fail above the hemisphere. We therefore read Corollary \ref{cor:Q66} not as a refutation of that question, but as the statement that its hypothesis does not survive the passage beyond $R=\tfrac\pi2$, while on $R\leq\tfrac\pi2$ the question is untouched by the present work. Second, the non-injectivity exhibited here occurs on the branch $a\in(0,\tfrac12)$, where $R>\tfrac\pi2$; it does not bear on the complementary branch $a\in(-\tfrac12,0)$, where $R<\tfrac\pi2$ by Proposition \ref{prop:subhemi} and where de Oliveira expects bijectivity and reports strong numerical evidence for it \cite[Rem. 3.5]{dO24}. The two branches are disjoint, and the picture that emerges is of a radius map that is injective below the hemisphere, as expected, and folded above it.
\end{remark}

\subsection*{The complementary branch}

Throughout this subsection $a\in(-\tfrac12,0)$, and $s_1(a)$, $R(a)$, $\Sigma_a$ and $\Psi_a$ are defined by \eqref{eq:Ga}, \eqref{eq:Rdef} and \eqref{eq:Psi} for such $a$ as well. The analysis of the present section transfers to this range with one sign reversal, and Theorem \ref{thm:exactnul} applies there too. Theorem \ref{thm:main} does not: the fold is established only on the branch $a>0$, and whether $R$ folds on the complementary branch is the content of Remark \ref{rem:dO35}.

\begin{proposition}[The sub-hemispherical branch]\label{prop:subhemi}
Let $a\in(-\tfrac12,0)$. Then $G_a$ has exactly one zero $s_1(a)$ in $(0,\tfrac\pi2]$, it satisfies $s_1(a)<\tfrac\pi2$ and is simple with $G_a'(s_1(a))>0$; one has $\beta(a,s_1(a))\in(0,\tfrac\pi2)$, hence $x_a(s_1(a))>0$ and $R(a)<\tfrac\pi2$; the contact is from inside, so that
$\eta=\partial_\rho$ on $\partial\Sigma_a$; the annulus $\Sigma_a$ is embedded and contained in $B(R(a))$; and $R$ is real-analytic on $(-\tfrac12,0)$.
Consequently $\{\Psi_a\}_{a\in(-1/2,0)}$ is a reflection-symmetric rotational variable-radius family satisfying the hypotheses of Theorem \ref{thm:exactnul}, and
\[
\dim\Ker_0^{\mathrm{ev}}(\Sigma_a)=\mathbf 1_{\{R'=0\}}(a),
\qquad a\in(-\tfrac12,0).
\]
\end{proposition}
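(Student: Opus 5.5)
The plan is to rerun the analysis of \S\ref{sec:caps} for $a\in(-\tfrac12,0)$, checking each sign that used $a>0$, and then feed the resulting family into Theorem \ref{thm:exactnul}.

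\emph{The period function for negative $a$.} Lemma \ref{lem:period} depends on $a$ only through $c(a)^2=\tfrac14-a^2$ and $a^2\sin^2\tau$, so \eqref{eq:Cclosed} gives $C_{-a}=C_a$. Hence \eqref{eq:Cbounds} holds verbatim on $(-\tfrac12,0)$. The symmetry $\omega(a,\pi-t)=\omega(a,t)$ does not use the sign of $a$, so \eqref{eq:beta-half} also holds: $\beta(a,\tfrac\pi2)=\tfrac12C_a\in(\tfrac\pi2,\pi)$.

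\emph{The zero $s_1(a)$.} The proof of Lemma \ref{lem:s1} then goes through unchanged up to \eqref{eq:fbc-clean}:
\begin{itemize}
\item $G_a(0)=-z_a(0)<0$ and $G_a(\tfrac\pi2)>0$, so $G_a$ has a zero in $(0,\tfrac\pi2)$.
\item At every zero, $G_a'(s_*)=y_a(s_*)/z_a(s_*)>0$, because $y_a>0$ on $(0,\tfrac\pi2]$. So every zero is a simple upward crossing, and the zero in $(0,\tfrac\pi2]$ is unique.
\end{itemize}
The only sign reversal occurs in \eqref{eq:fbc-clean}. Its left-hand side $a\sin(2s_1)\sin\beta$ is now strictly negative, which forces $\cos\beta(a,s_1)>0$, that is $\beta(a,s_1)\in(0,\tfrac\pi2)$. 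Hence $x_a(s_1)=\lambda_a^{1/2}\cos\beta>0$ and $R(a)<\tfrac\pi2$. As a cross-check, \eqref{eq:Rclosed} (whose proof is sign-independent) gives $\cot R=-a\sin(2s_1)/z_a(s_1)^2>0$.

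\emph{Contact from inside.} For the contact side (Remark \ref{rem:side}) I need $\dot x_a(s_1)<0$. In
\[
\dot x_a=\frac{a\sin2s}{\lambda_a^{1/2}}\cos\beta-\lambda_a^{1/2}\,\omega\,\sin\beta ,
\]
the first summand is negative because now $a<0$ and $\cos\beta>0$, and the second is negative as before. Hence $\dot\rho(s_1)=1$ and $\eta=\partial_\rho$ on both boundary circles.

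\emph{Embeddedness and containment.} I follow Proposition \ref{prop:embcont}.
\begin{itemize}
\item Injectivity holds since $2\beta(a,s_1)<\pi<2\pi$.
\item For monotonicity of $u=x_a$ on $[0,s_1]$, take the first critical point $s_c$. The case $u(s_c)>0$ is excluded exactly as before by $\ddot u(s_c)=-2u(s_c)$. The case $u(s_c)<0$ cannot occur, since $\beta<\tfrac\pi2$ on $[0,s_1]$ keeps $u>0$.
\end{itemize}
This is the step I expect to need the most care, since the argument originally used $a>0$ in the negative-$u$ branch. Here that branch is vacuous, so the proof is actually easier.

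\emph{Analyticity and the family.} Analyticity of $s_1$ and $R$ on $(-\tfrac12,0)$ follows from the analytic implicit function theorem as in Lemma \ref{lem:analytic}, using simplicity and uniqueness of the zero. Here $|x_a(s_1)|<1$, so $\arccos$ is analytic at these values. Then $\{\Psi_a\}$ from \eqref{eq:Psi} is a variable-radius family, and it satisfies the hypotheses of Theorem \ref{thm:exactnul}:
\begin{itemize}
\item Reflection symmetry via $\sigma:x^2\mapsto-x^2$ holds, since the parities of $x_a,y_a,z_a$ do not depend on the sign of $a$.
\item $A_a(\eta,\eta)^2=c(a)^2/z_a^4>0$ by \eqref{eq:Aclosed}, whose derivation is sign-free.
\item $|\varphi_a(0)|=1/(2c(a))>0$ by the computation of Lemma \ref{lem:phinonzero}, which is valid for all $|a|<\tfrac12$.
\end{itemize}
Theorem \ref{thm:exactnul} then yields $\dim\Ker_0^{\mathrm{ev}}(\Sigma_a)=\mathbf 1_{\{R'=0\}}(a)$ on $(-\tfrac12,0)$.
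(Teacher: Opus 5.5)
Your proposal is correct and follows the paper's proof essentially step by step: the single sign reversal in \eqref{eq:fbc-clean}, the now-vacuous negative branch in the monotonicity argument, and the sign-free verification of the hypotheses of Theorem \ref{thm:exactnul} all match. The differences are minor. You obtain $C_{-a}=C_a$ from the evenness in $a$ of \eqref{eq:Cclosed}, which is legitimate since its derivation uses only $|a|<\tfrac12$, whereas the paper uses $\omega(-a,t)=\omega(a,t+\tfrac\pi2)$ together with $\pi$-periodicity, the same half-period shift in another form. Also, your cross-check via \eqref{eq:Rclosed} tacitly uses $\eta=\partial_s$ at $s_1$, which you only establish in the following step.
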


\begin{proof}
Directly from \eqref{eq:dO-defs} and \eqref{eq:dO-omega} one has $c(-a)=c(a)$, $z_{-a}(t)=\lambda_a(t)^{1/2}$ and $\lambda_{-a}(t)=z_a(t)^2$, while $z_a(t+\tfrac\pi2)=\lambda_a(t)^{1/2}$ and $\lambda_a(t+\tfrac\pi2)=z_a(t)^2$; hence $\omega(-a,t)$ and $\omega(a,t+\tfrac\pi2)$ both equal $c(a)\,\lambda_a(t)^{-1/2}z_a(t)^{-2}$, so that
$\omega(-a,t)=\omega(a,t+\tfrac\pi2)$. Since $\omega(a,\cdot)$ is $\pi$-periodic,
\[
C_{-a}=\int_0^{\pi}\omega(a,t+\tfrac\pi2)\,dt
=\int_{\pi/2}^{3\pi/2}\omega(a,u)\,du=C_a.
\]
Thus \eqref{eq:Cbounds} holds for every $|a|<\tfrac12$. The identity \eqref{eq:beta-half} also holds there, its proof using only $\omega(a,\pi-t)=\omega(a,t)$.

The proof of Lemma \ref{lem:s1} now applies verbatim: $G_a(0)=-z_a(0)<0$ because $z_a(0)=(\tfrac12+a)^{1/2}>0$; by the previous paragraph $\beta(a,\tfrac\pi2)=\tfrac12C_a\in(\tfrac\pi2,\tfrac{\pi}{\sqrt2}]$, so $\cos\beta(a,\tfrac\pi2)<0$ and $G_a(\tfrac\pi2)>0$; and $\beta(a,\cdot)$ is
strictly increasing with $\beta(a,\tfrac\pi2)<\pi$, so $y_a>0$ on $(0,\tfrac\pi2]$ and \eqref{eq:Gprime} makes every zero in $(0,\tfrac\pi2]$ a simple upward crossing. Exactly one zero therefore occurs, and $s_1(a)<\tfrac\pi2$.

In \eqref{eq:fbc-clean} the left-hand side is now strictly \emph{negative}, because $a<0$, $s_1\in(0,\tfrac\pi2)$ and $\sin\beta(a,s_1)>0$; since $c(a)>0$ and $z_a>0$ this forces $\cos\beta(a,s_1)>0$, whence $\beta(a,s_1)\in(0,\tfrac\pi2)$,
$x_a(s_1)=\lambda_a(s_1)^{1/2}\cos\beta(a,s_1)>0$ and $R(a)<\tfrac\pi2$. In the expression for $\dot x_a$ both summands are again strictly negative at $s_1$: the first because $a<0$, $\sin(2s_1)>0$ and $\cos\beta(a,s_1)>0$, the second
because $\sin\beta(a,s_1)>0$. Thus $\dot x_a(s_1)<0$ and
Remark \ref{rem:side} applies unchanged.

Since $\beta(a,\cdot)$ is odd and strictly increasing, its range on $[-s_1,s_1]$ is an interval of length $2\beta(a,s_1)<\pi<2\pi$, so the injectivity argument of Proposition \ref{prop:embcont} goes through. For the
monotonicity of $x_a$ one modification is needed: here
$\beta(a,s)\in[0,\tfrac\pi2)$ for every $s\in[0,s_1]$, so $x_a>0$ on $[0,s_1]$ and only the case $u(s_c)>0$ of that proof can occur; it is excluded there by $\ddot u(s_c)=-2u(s_c)<0$. Hence $x_a$ is strictly decreasing on $[0,s_1]$, so $x_a\geq x_a(s_1)=\cos R(a)$ and $\Sigma_a\subset B(R(a))$.

The zero being simple, the analytic implicit function theorem applies at every $a\in(-\tfrac12,0)$ and the argument of Lemma \ref{lem:analytic} yields the real-analyticity of $s_1$ and of $R$ there. Finally $\beta(a,\cdot)$ is odd for
every $|a|<\tfrac12$, so $x_a,z_a$ are even and $y_a$ is odd, and the paragraph following \eqref{eq:Psi} applies verbatim: $\{\Psi_a\}$ is rotational and reflection-symmetric in the sense of \eqref{eq:reflection}. Since \eqref{eq:Aclosed} and Lemma \ref{lem:phinonzero} are computations in $c(a)$
and $z_a$ valid for $|a|<\tfrac12$, giving $A_a(\eta,\eta)^2=c(a)^2/z_a(s_1)^4>0$
and $|\varphi_a(0)|=1/\bigl(2c(a)\bigr)>0$, Theorem \ref{thm:exactnul} applies.
\end{proof}

\begin{remark}[A spectral reading of {\cite[Rem. 3.5]{dO24}}]\label{rem:dO35}
By Proposition \ref{prop:subhemi} the radius map sends $(-\tfrac12,0)$ into $(0,\tfrac\pi2)$, and \cite[Rem. 3.5]{dO24} asks, on numerical evidence, whether it is a bijection onto $(0,\tfrac\pi2)$. The dictionary gives that no member of this branch is parametrically degenerate if and only if $R'$ vanishes nowhere on $(-\tfrac12,0)$; conversely a critical point of $R$ there would produce a degenerate annulus \emph{below} the hemisphere, in the range of radii
to which \cite[Q. 6.6]{NZ1} refers. Two caveats. The nonvanishing of $R'$ is strictly stronger than injectivity, since $R$ may be injective near a critical point of odd order in the sense of \S\ref{sec:intro}; and the statement concerns the rotationally invariant, reflection-even sector only, so it leaves the full nullity open, for which see Problem \ref{prob:fullnullity}.
\end{remark}

\section{The hyperbolic side of the dichotomy, and open problems}\label{sec:remarks}

\subsection*{The Mori family in \texorpdfstring{$\Hyp^3$}{H3}}

The rotational free boundary minimal annuli $\Sigma_a\subset B(r(a))\subset\Hyp^3$, $a>\tfrac12$, of Mori \cite{Mori81} and do Carmo--Dajczer \cite{dCD83}, in the free boundary truncation of Li--Xiong \cite[\S 2]{LX18} and in the normalization of \cite{P1,P2}, form a reflection-symmetric rotational variable-radius family. Its two standing hypotheses are met: the nonvanishing of $A_a(\eta,\eta)$ holds by the closed form \eqref{eq:Aclosed-mori} below, in accordance with Lemma \ref{lem:bdrycurv}, and $\varphi_a\not\equiv 0$ holds for every $a>\tfrac12$ because \cite[Lem. 11.1]{P1} exhibits the constant nonzero Wronskian $W(\varphi_a,u_*)\equiv-a/K(a)$ of $\varphi_a$ against the axial boost field $u_*$, and a nonvanishing Wronskian forces both entries to be nontrivial. Consequently Theorem \ref{thm:exactnul} and Theorem \ref{thm:fold}(i)--(ii) apply to this family unconditionally.

We record the profile equations, both to fix notation and because they reduce Problem \ref{prob:mori} to two elementary monotonicity statements. In Fermi coordinates about the rotation axis the metric of $\Hyp^3$ reads $du^2+\cosh^2u\,dt^2+\sinh^2u\,d\theta^2$, where $u$ is the distance from the axis, $t$ the arclength along it and $\theta$ the rotation angle; the centre $p_0$ is the point $u=t=0$, and $\cosh\rho=\cosh u\cosh t$. A rotational surface is generated by a profile $s\mapsto(u(s),t(s))$, parametrised by arclength, its induced metric is $ds^2+b^2\,d\theta^2$ with $b:=\sinh u$ the profile radius, in the notation of the proof of Theorem \ref{thm:exactnul}, and its area is $2\pi\int b\,ds$. By the principle of symmetric criticality the surface is minimal precisely when the profile is critical for that functional; since $t$ is cyclic, this yields the first integral
\begin{equation}\label{eq:mori-clairaut}
b\,(1+b^2)\,\dot t=K,\qquad
\dot b^{\,2}=1+b^2-\frac{K^2}{b^2}=\frac{(b^2-B_0)(b^2+1+B_0)}{b^2},
\end{equation}
with $K>0$ constant along the profile and $B_0=\tfrac12\bigl(-1+\sqrt{1+4K^2}\bigr)$ the squared radius of the neck. The normalisation $K(a)=(a^2-\tfrac14)^{1/2}$ is precisely the one for which $B_0=a-\tfrac12$, so that the parameter range is exactly $a>\tfrac12$, the neck closing as $a\to(\tfrac12)^{+}$.

The second fundamental form of these annuli is available in closed form, exactly as in \eqref{eq:Aclosed}. By rotational symmetry $\partial_s,\partial_\theta$ are principal, so by minimality the principal curvatures are $\pm\mu_a$ with $\mu_a^2=A_a(\partial_s,\partial_s)^2$ and $|A_a|^2=2\mu_a^2$; the induced metric $ds^2+b^2d\theta^2$ has Gauss curvature $-\ddot b/b$, so the Gauss equation in $\Hyp^3$ gives $-\ddot b/b=-1-\mu_a^2$, that is $\mu_a^2=\ddot b/b-1$. Differentiating the second identity in \eqref{eq:mori-clairaut} gives $2\dot b\,\ddot b=2\dot b\,\bigl(b+K^2b^{-3}\bigr)$, and $\dot b$ vanishes only at the neck $b^2=B_0$, so that $\ddot b=b+K^2b^{-3}$ along the whole profile by continuity. Therefore
\begin{equation}\label{eq:Aclosed-mori}
A_a(\partial_s,\partial_s)^2=\frac{K(a)^2}{b^4},\qquad |A_a|^2=\frac{2\,K(a)^2}{b^4}\qquad\text{on all of }\Sigma_a .
\end{equation}
In particular $|A_a|$ never vanishes, and since $\eta=\pm\partial_s$ along the boundary, $A_a(\eta,\eta)^2=K(a)^2/b_0(a)^4>0$; being continuous and nowhere zero on the connected surface $\Sigma_a$, the function $A_a(\partial_s,\partial_s)$ has a constant strict sign, the same on both boundary circles. Thus for the Mori family too the conclusions of Lemma \ref{lem:bdrycurv} hold by direct computation, independently of \cite{NZ1}.

Let $s_0(a)$ be the free boundary parameter and write $b_0(a):=b(s_0(a))$. Orthogonality of the profile to the level sets of $\rho$ reads $\dot b\sinh t=K\cosh t$ at $s_0$, that is $\tanh t(s_0)=K/\dot b(s_0)$; substituting into $\sinh^2\rho=(1+b^2)\cosh^2t-1$ and using $\dot b^{\,2}-K^2=(1+b^2)(b^2-K^2)/b^2$, which follows from \eqref{eq:mori-clairaut}, one obtains the closed relation between the radius and the profile,
\begin{equation}\label{eq:mori-radius}
\sinh^2 r(a)=\frac{b_0(a)^4}{b_0(a)^2-K(a)^2} ,
\end{equation}
which is \cite[Prop. 8.10]{P1}. Since $\partial_B\log\bigl(B^2/(B-K^2)\bigr)=(B-2K^2)/\bigl(B(B-K^2)\bigr)$, the strict pinching condition $b_0^2>2K^2$ is exactly the statement that the radius increases with the profile radius at the boundary. Differentiating \eqref{eq:mori-radius} with respect to $K$, a strictly increasing function of $a$, gives
\begin{equation}\label{eq:mori-reduction}
\frac{d}{dK}\log\sinh^2 r=\frac{b_0^2-2K^2}{b_0^2\,\bigl(b_0^2-K^2\bigr)}\cdot\frac{d\,b_0^2}{dK}+\frac{2K}{b_0^2-K^2},
\end{equation}
whose second summand is positive. Hence the strict pinching condition, together with the strict increase of $K\mapsto b_0^2$, forces $r$ to be strictly increasing, and therefore, by Theorem \ref{thm:dictionary}, forces the Mori family to be free of parametric degenerations: Problem \ref{prob:mori} reduces to those two monotonicity statements.

The fine analysis of this family is carried out in \cite{P1}, to which we refer; in particular \cite[Thm. 8.17]{P1} establishes the strict pinching condition on $(\tfrac12,1]$, its validity on $(1,\infty)$ being left open there apart from the limiting regimes, and \cite[Thm. 11.5 and Thm. 11.6]{P1} establish the equivalence, under it, between the positivity of the parametric field on the profile and the \emph{strict increase} of $a\mapsto\sinh r(a)/K(a)$. We emphasise that it is the strict increase, and not mere strict monotonicity, that is relevant: since $K(a)K'(a)=a$, one has
\begin{equation}\label{eq:H-log}
K^2\,\frac{\bigl(\sinh r/K\bigr)'}{\sinh r/K}=K^2r'\coth r-a ,
\end{equation}
so $r'(a)=0$ forces the left-hand side to be negative, and a strictly decreasing $\sinh r/K$ is therefore compatible with a critical point of the radius, whereas a strictly increasing one is not.

Identity \eqref{eq:H-log} also converts the local analytic result of \cite{P1} into the hyperbolic half of the dichotomy announced in the introduction.

\begin{corollary}[No parametric degeneration in the Mori family near the closing neck]\label{cor:mori-local}
By \cite[Cor. 11.10]{P1}, the function $H(a)=\sinh r(a)/K(a)$ is strictly increasing on a right neighbourhood of $\tfrac12$. Consequently there exists $\delta_0>0$ such that $r'(a)>0$ for every $a\in(\tfrac12,\tfrac12+\delta_0)$; on that range the Mori family carries no parametric degeneration, $\varphi_a\notin\Ker(\Sigma_a,r(a))$, and
\[
\dim\Ker_0^{\mathrm{ev}}(\Sigma_a)=0\qquad\text{for every }a\in\bigl(\tfrac12,\tfrac12+\delta_0\bigr).
\]
\end{corollary}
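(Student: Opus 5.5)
The plan is to convert the strict increase of $H(a)=\sinh r(a)/K(a)$ near $a=\tfrac12$, taken as given from \cite[Cor. 11.10]{P1}, into strict positivity of $r'$ by means of \eqref{eq:H-log}. After that, Theorem \ref{thm:dictionary} and Theorem \ref{thm:exactnul} give the spectral conclusions directly.

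\emph{Step 1: the sign of $r'$.} Fix $\delta_0>0$ such that $H$ is strictly increasing on $(\tfrac12,\tfrac12+\delta_0)$. Since the family is real-analytic in $a$, I would first try to upgrade strict increase to $H'>0$. If analyticity of $r$ is not available, I would argue through the zero set of $H'$ instead, as follows.

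From \eqref{eq:H-log} we have $K^2r'\coth r=a+K^2H'/H$. Here $H>0$, $H'\ge0$ on the interval, $a>\tfrac12>0$, and $\coth r>0$. Hence $K^2r'\coth r\ge a>0$, which gives $r'(a)>0$ at every point of the interval.

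The point of this route is that mere nonnegativity of $H'$ suffices: the additive term $a$ already forces strict positivity of $r'$. So no upgrade from strict monotonicity to a nonvanishing derivative is needed at all. This is exactly the remark following \eqref{eq:H-log}: $r'=0$ would force $H'<0$, which contradicts $H'\ge0$. The only hypotheses used are $K(a)>0$ for $a>\tfrac12$ and the identity $KK'=a$.

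\emph{Step 2: no parametric degeneration.} The standing hypotheses have already been verified for the Mori family. By \eqref{eq:Aclosed-mori}, $A_a(\eta,\eta)\neq0$ on $\partial\Sigma$, and by the Wronskian of \cite[Lem. 11.1]{P1}, $\varphi_a\not\equiv0$. With these in hand, Theorem \ref{thm:dictionary} and $r'(a)\ne0$ give $\varphi_a\notin\Ker(\Sigma_a,r(a))$.

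\emph{Step 3: the nullity count.} The family is reflection-symmetric and rotational, so Theorem \ref{thm:exactnul} applies and gives $\dim\Ker_0^{\mathrm{ev}}(\Sigma_a)=\mathbf 1_{\{r'=0\}}(a)=0$ on $(\tfrac12,\tfrac12+\delta_0)$.

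The main obstacle is in Step 1, namely making sure that \eqref{eq:H-log} is correctly derived and applies on the interval. It follows by logarithmic differentiation: $(\log H)'=r'\coth r-K'/K$ and $K'/K=a/K^2$. For this we need $r$ and $K$ to be differentiable there, with $\sinh r>0$. Once that is settled, everything else is a direct citation.
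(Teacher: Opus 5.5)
Your proof is correct and follows the paper's argument: \eqref{eq:H-log}, together with $H>0$, $a>0$ and $\coth r>0$, gives $r'>0$ on the interval, and then Theorem \ref{thm:dictionary} and Theorem \ref{thm:exactnul} give the spectral conclusions. Your observation that $H'\ge 0$, which is all that strict increase yields, already suffices is a small genuine tightening: the paper's proof reads \cite[Cor. 11.10]{P1} as giving $H'>0$, whereas the statement itself only asserts that $H$ is strictly increasing.
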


\begin{proof}
By \cite[Cor. 11.10]{P1} there is $\delta_0>0$ such that $H'(a)>0$ on $(\tfrac12,\tfrac12+\delta_0)$, where $H(a)=\sinh r(a)/K(a)$. Since $H>0$, identity \eqref{eq:H-log} gives $K^2r'\coth r-a=K^2H'/H>0$ there, whence $K^2r'\coth r>a>0$ and therefore $r'(a)>0$. The two standing hypotheses of Theorem \ref{thm:exactnul} hold for this family, as recalled at the beginning of this subsection. Theorem \ref{thm:dictionary} then gives $\varphi_a\notin\Ker(\Sigma_a,r(a))$, and Theorem \ref{thm:exactnul} gives the stated vanishing.
\end{proof}

In view of Corollary \ref{cor:monotone}, and of the two standing hypotheses being met, the absence of rotationally invariant even Jacobi--Robin fields along the Mori family is equivalent to $r'\neq 0$, hence to the local injectivity of the radius map. Corollary \ref{cor:mori-local} settles this near the closing neck. The following therefore remains open.

\begin{problem}\label{prob:mori}
Is the radius function $a\mapsto r(a)$ of the Mori family strictly monotone on $(\tfrac12,\infty)$ --- equivalently, since $r(a)\to 0$ as $a\to(\tfrac12)^+$ and $r(a)\to\infty$ as $a\to\infty$ by \cite{P2}, a bijection onto $(0,\infty)$? Equivalently, by Theorem \ref{thm:dictionary}, is the family free of parametric degenerations? The question is raised in \cite[Rem.~4.3]{dO-thesis}, where numerical evidence for strict monotonicity is given (Fig.~4.2); as noted there, the resulting uniqueness is asserted without justification in \cite{LX18} and cited from there in \cite{Med23}. It is raised independently in \cite[Question 6.3]{P2}; by Corollary \ref{cor:mori-local} the answer is affirmative on a right neighbourhood of $\tfrac12$, and by \eqref{eq:mori-reduction} it suffices, on the remaining range, to establish the strict pinching condition $b_0^2>2K^2$ together with the strict increase of $K\mapsto b_0^2$.
\end{problem}

\subsection*{Problems in spherical caps}

\begin{problem}\label{prob:caps}
Determine the exact number and the nondegeneracy of the critical points of the radius map $a\mapsto R(a)$ on $(0,\tfrac12)$. The numerical discussion in \cite[Rem. 1.3]{NZ1} exhibits a single interior maximum of the radius, but makes no assertion about its nondegeneracy or about the absence of further critical points. Uniqueness of the maximizer would make the annulus of Definition \ref{def:degannulus} the unique degenerate member of the family. As for nondegeneracy, we prove $R'(a_*)=0$ but make no claim about $R''(a_*)$, and this costs less than it may appear: by Theorem \ref{thm:main}(iv) the vanishing of $\dim\Ker_0^{\mathrm{ev}}$ on a punctured neighborhood of $a_*$ is unconditional, real-analyticity alone forcing the zeros of $R'$ to be isolated. Only the local two-to-one description near $a_*$, that is, the statement that each radius slightly below $R(a_*)$ is attained by exactly two parameters near $a_*$, would require $R''(a_*)\neq 0$. Proposition \ref{prop:Rprime} reduces both questions to the elementary relation \eqref{eq:critical}: the critical points of $R$ are exactly the zeros of
\[
a\longmapsto \tfrac12\sin\bigl(2s_1(a)\bigr)+2a\Bigl(z_a(s_1)^2\cos\bigl(2s_1(a)\bigr)+a\sin^2\bigl(2s_1(a)\bigr)\Bigr)s_1'(a),
\]
an expression in which the only transcendental ingredient is $s_1$, through \eqref{eq:s1prime}. Both the uniqueness of the zero and its nondegeneracy are therefore questions about the single scalar function $s_1$ and its first two derivatives, rather than about the radius map itself.
\end{problem}

\begin{problem}\label{prob:fullnullity}
Compute the full nullity $\nul(\Sigma_{a_*})$ of a degenerate annulus. Theorem \ref{thm:main}(iii) exhibits one kernel element in the reflection-even part of mode zero, outside the Killing-induced ones, and Proposition \ref{prop:killnul} contributes two Killing-induced elements in the modes $|k|=1$, whence $\nul(\Sigma_{a_*})\geq 3$. Three sectors remain unaccounted for, and the lower bound is not claimed to be sharp: the reflection-odd part of mode zero, on which Theorem \ref{thm:exactnul} says nothing; the possible excess of the modes $|k|=1$ over the two Killing-induced elements, Proposition \ref{prop:killnul} being a lower bound; and the modes $|k|\geq 2$. We note that at every \emph{noncritical} parameter Corollary \ref{cor:signchange} gives, in any Fourier mode, that every nonzero Jacobi--Robin field of $\Sigma_a$ changes sign on $\partial\Sigma_a$, which constrains the corresponding analysis.
\end{problem}

\begin{problem}\label{prob:otherbranches}
De Oliveira \cite[Thm. 3.2]{dO24} produces, for suitable $a$, countably many nested free boundary minimal annuli $X_a:[-s_i(a),s_i(a)]\times\Sph^1\to\Sph^3$ with $i\geq 1$. Do the higher branches $a\mapsto R_i(a)$, $i\geq 2$, also fold, and do they produce further degenerate annuli?
\end{problem}

\subsection*{Higher dimensions and non-rotational families}

Theorem \ref{thm:defect} holds in every dimension and requires no symmetry. The rotational hypersurface families of do Carmo--Dajczer \cite{dCD83} in $\Hyp^{n+1}$ and $\Sph^{n+1}$, truncated at their free boundary, provide variable-radius families for which index, nullity and radius monotonicity are, to the author's knowledge, entirely open. Likewise, the dictionary of Theorem \ref{thm:dictionary} applies to the non-rotational one-parameter families of free boundary minimal annuli in geodesic balls of $\Hyp^3$ constructed by Cerezo \cite{Cer25}, and to the ambient families of \cite{CFM25,FHM23}: for each such family the parametric degenerations are exactly the critical points of the corresponding radius function.

\section*{Declarations}

\subsection*{Conflict of interest}

The author declares no conflict of interest.

\subsection*{Data availability}

Data sharing is not applicable to this article as no datasets were generated or analysed during the current study.

\end{document}